\documentclass{amsart}
\usepackage{amsmath,amsthm,amssymb,amsfonts,amscd}

\newcommand \p {\mathbb{P}}
\newcommand \lex {\ensuremath{\mathrm{lex}}}

\newcommand \sat {\textup{sat}}

\newcommand \mac[1] {\langle #1 \rangle}

\newcommand \reg {\textup{reg}}
\newcommand \codim {\textup{codim}}

\newcommand \N {\mathbb{N}}
\newcommand \X {\mathbb{X}}
\newcommand \Q {\mathbb{Q}}
\newcommand \Y {\mathbb{Y}}
\newcommand \Z {\mathbb{Z}}
\def\ds{\displaystyle}

\theoremstyle{plain} 
\newtheorem{Thm}{\bf Theorem}[section]
\newtheorem{Prop}[Thm]{\bf Proposition}
\newtheorem{Coro}[Thm]{\bf Corollary}
\newtheorem{Lem}[Thm]{\bf Lemma}

\theoremstyle{definition}
\newtheorem{Def}[Thm]{\bf Definition}
\newtheorem{Ex}[Thm]{\bf Example}
\newtheorem{Remk}[Thm]{\bf Remark}

\newtheorem{ques}[Thm]{\bf Question}

\numberwithin{equation}{section}

\numberwithin{equation}{section}

\begin{document}

\title{The Gotzmann Coefficients of Hilbert Functions}
\author[J. Ahn]{Jeaman Ahn${}^\dag$}
\thanks{${}^\dag$This paper was supported by a grant from Kongju National University in 2008.}
\address[J. Ahn]{Department of Mathematics Education, Kongju National University, 182, Shinkwan-dong, Kongju, Chungnam 314-701,  Republic of Korea}
\email{jeamanahn@kongju.ac.kr}

\author[A.V. Geramita]{Anthony V. Geramita}
\address[A.V. Geramita]{Department of Mathematics, Queen's University, Kingston, Ontario, Canada, K7L 3N6 and Dipartimento di Mathematica Universit\'a di Genova, Genova, Italia}
\email{tony@mast.queensu.ca \\ geramita@dima.unige.it}

\author[Y.S. Shin]{Yong Su Shin${}^*$}
\thanks{${}^*$This paper was supported by a grant from Sungshin Women's University in 2007.}
\address[Y.S. Shin]{Department of Mathematics, Sungshin Women's University,  Seoul, 136-742, Republic of Korea}
\email{ysshin@sungshin.ac.kr}

\begin{abstract}
In this paper we investigate some algebraic and geometric consequences which arise from an extremal bound on the Hilbert function of the general hyperplane section of a variety (Green's Hyperplane Restriction Theorem).  These geometric consequences improve some results in this direction first given by Green and extend others by Bigatti, Geramita, and Migliore.

Other applications of our detailed investigation of how the Hilbert polynomial is written as a sum of binomials, are to conditions that must be satisfied by a polynomial if it is to be the Hilbert polynomial of a non-degenerate integral subscheme of $\mathbb P^n$ (a problem posed by R. Stanley).  We also give some new restrictions on the Hilbert function of a zero dimensional reduced scheme with the Uniform Position Property.
\end{abstract}

\subjclass[2000]{Primary:13A02, 13A15; Secondary:14H45, 14H50}
\keywords{Hilbert functions,  Hilbert polynomials, Gotzmann Coefficients, Gotzmann  numbers, Integral Subschems, Uniform Position Property.}

\maketitle

\section{Introduction}

In his seminal work on Hilbert functions of standard graded algebras, Macaulay \cite{M} discovered the rule governing the growth of such functions.  He expressed that rule in terms of certain expansions of the values of these functions by binomial numbers.  Indeed, part of the genius of Macaulay's approach is in the discovery of this uniform approach to the problem of understanding the nature of these functions.

After an initial hiatus of a few decades, the depth and value of Macaulay's approach was again appreciated.  Indeed, in the last half century, the importance of the Hilbert function (and Hilbert polynomial) to the study of algebraic varieties and to commutative rings is hard to overestimate.  An integral part of that appreciation of the importance of the Hilbert function has been accompanied by several reappraisals of Macaulay's original proof and significant refinements have been made to that original argument.  Most notable among these are the work of Stanley~\cite{St}, Green~\cite{G1}, and Gotzmann~\cite{Go}.

In fact, Green's approach to Macaulay's Theorem included a brand new element --- a comparison between the Hilbert function of a variety and that of its general hyperplane section (see Theorem~\ref{T:201} below).

It is well known that the Hilbert function and Hilbert polynomial of an embedded algebraic variety, although being natural {\it algebraic} invariants associated to the coordinate ring of the embedded variety, also carry significant geometric information about the variety - some of the information being connected to the embedding, like the degree of the variety, while other information is more intrinsic (i.e., does not depend on the embedding) like the dimension and genus of the variety.

A great deal of research has been conducted with the aim of extracting other such geometric information from the Hilbert function and Hilbert polynomial.  The papers \cite{AC, BGM, EH1, G1, Go, I_K, Mi, Preser} give a small sample of the kinds of investigations that have been carried out in this direction. The book \cite{KR} (especially Chapter 5) is an excellent ``one-stop" view of most of this work, including (new and short) proofs
of both Macaulay¡¯s and Green¡¯s Theorems.

This paper falls into that tradition of trying to understand the geometric consequences of certain behavior of the Hilbert function.  Unlike most earlier investigations in this direction (but present already in the work of Gotzmann \cite{Go}, Iarrobino-Kleiman \cite{I_K}, Ahn--Cho \cite{AC}, and implicitly in Kreuzer-Robbiano~\cite{KR}, Section 5.5) we concentrate not only on the values of the Hilbert function but exactly how those values are expressed by Macaulay's original considerations.

For example, if $\X$ is an irreducible variety in $\p^n$ then the number of binomial summands that (eventually) appear in the Hilbert function of $\X$ is an invariant of $\X$ (see Definition \ref{Gotzmann}) denoted $G(\X)$ and called the {\it Gotzmann number of $\X$}.  It is not difficult to show that $\deg\X \leq G(\X)$.  We characterize the varieties $\X$ for which this inequality is an equality (see Theorem~\ref{T:311}).  This result follows from a detailed investigation of precisely when the inequality in Green's Hyperplane Restriction Theorem is an equality.

In fact, if we denote by $M(\X)$ the least integer such that the inequality in Green's Theorem is an equality for all $d \geq M(\X)$ then one easily sees that $M(\X) \leq G(\X) + 1$.  We improve this to show that $M(\X) \leq G(\X)$ (see Proposition~\ref{P:301}) and then go on to show that $M(\X) = G(\X)$ or $M(\X) = 1$ (see Proposition~\ref{P:306}).  Connecting this to our earlier geometric discussion we show that if $\X$ is a reduced, equidimensional closed subscheme of $\p^n$ then either $G(\X) = \deg(\X)$ or $G(\X) = M(\X)$.

Continuing with our investigation of Macaulay's way of writing the Hilbert polynomial of a scheme, we prove that if $\X$ is a non-degenerate reduced equidimensional closed subscheme of $\p^n$ of codimension $\geq 2$ then Macaulay's decomposition of the Hilbert polynomial must satisfy certain properties (see Theorem \ref{T:404}).  We use these observations to exhibit new restrictions on the Hilbert function of a set of points in $\p^n$ with the Uniform Position Property (see Theorem \ref{T:506}).

The paper is organized in the following way: in Section 2 we recall the essential parts of Macaulay's Theorem on the growth of the Hilbert function of a standard graded $k$-algebra, $k$ a field, usually infinite.  If more conditions on $k$ are required we shall state so at the relevant place.  In this section one finds the definitions of the Gotzmann number and the Gotzmann coefficients.  We also recall Green's Hyperplane Restriction Theorem in this section.  After that we calculate the various invariants we have introduced in a few special situations.

In Section 3 we investigate the possibility of equality in Green's Hyperplane Restriction Theorem and study this condition in detail.  This is the technical heart of the paper.  We also give a few of our main consequences of this investigation in this section.

In Section 4 we investigate the nature of the Gotzmann coefficients for reduced equidimensional closed subschemes of $\p^n$.  These are rather delicate invariants and we show that, in the relevant range (and apart from a completely describable collection of exceptions) these coefficients are never zero.

This investigation sets us up for the discussion in the final section, Section 5, on the Hilbert function of points with the Uniform Position Property.

\section{\sc Preliminaries}\label{Preliminaries}

Many of the preliminaries we will discuss in this section are based on the fundamental work of Macaulay (and subsequently that of G. Gotzmann) which describe the growth of the dimensions of the homogeneous summands of a standard graded $k$-algebra.

Recall that if $d >0$ and $c>0$ are two integers then the $d$-{\it binomial expansion of $c$} is the unique expression
\begin{equation}\label{EQ:201}
c=\binom {k_d}{d}+\binom {k_{d-1}}{d-1}+\cdots +\binom
{k_\delta}{\delta}   
\end{equation}
where $k_d>k_{d-1}>\cdots >k_\delta \ge \delta >0$.

\begin{Ex}
The $4$-binomial expansion of 27 is:
$$
27 = \binom{6} {4} + \binom{5} {3} + \binom{2} {2} + \binom{1} {1} .
$$
\end{Ex}

An equivalent way to describe the $d$-binomial expansion of $c$ is to construct what we define to be the {\it $d^{th}$ Macaulay difference set of $c$}, i.e., the tuple
$$
M_d(c) = (k_d - d, k_{d-1} -(d-1), \dots , k_\delta - \delta )
$$
(where we use the notation of equation~\eqref{EQ:201} above).  Notice that this tuple has the property
$$
k_d - d \geq k_{d-1} -(d-1) \geq \cdots \geq k_\delta - \delta \geq 0 .
$$
\begin{Ex}
From the example above we have
$$
M_4(27) = (2,2,0,0).
$$
\end{Ex}

We define the {\it length of $M_d(c)$} to be the number of entries in $M_d(c)$ (e.g., the length of $M_4(27)$ is $4$).

If we are given a tuple of $d - \delta +1 \leq d $ integers, say $(a_d, a_{d-1}, \dots ,a_\delta)$, such that $ a_d \geq a_{d-1} \geq \cdots \geq a_\delta \geq 0$ then that tuple is $M_d(c)$ for the integer
$$
c = \binom{d + a_d}{d} + \binom{(d-1) + a_{d-1}}{d-1} + \cdots + \binom{\delta + a_\delta} {\delta} .
$$

We will see, in the ensuing sections, that when we construct these multisets for the various values of the Hilbert function of an algebraic variety $\X \subset \p ^n$ then the entries of these multisets will signal subtle information about geometric properties of $\X$.

A fundamental result of Macaulay highlights the importance of the following functions, which are defined for every integer $d > 0$.  These functions from $\mathbb N$ to $\mathbb N$ are now called {\it Macaulay's functions}.  They are denoted $ - ^{\langle d\rangle}$ (and referred to as \lq\lq {\it{upper pointy bracket d}}"), and are defined as follows: if $c > 0$ and the $d$-binomial expansion of $c$ is as above in equation~\eqref{EQ:201}, then
$$
c^{\mac d}:= \binom {k_d + 1}{d + 1}+\binom {k_{d-1} + 1}{d}+\cdots +\binom {k_\delta + 1}{\delta + 1}.
$$

\medskip
\begin{Remk}\label{nochange} Notice that the $d^{th}$ Macaulay difference set of $c$ and the $(d+1)^{st}$ Macaulay difference set of $c^{\langle d \rangle}$ are the same.
\end{Remk}
\medskip

Another (similar) collection of functions was introduced and exploited by Green \cite{G1}.  They are denoted $-_{\mac d}$, (and referred to as \lq\lq {\it{lower pointy bracket d}}"), and defined by
$$
c_{\mac d}:= \binom {k_d -1}{d}+\binom {k_{d-1}-1}{d-1}+\cdots +\binom
{k_\delta -1}{\delta},
$$
(where the convention is that $\binom{i}{j} = 0$, when $i < j$).  We will call these functions {\it Green's functions}.

\medskip  Now, if $I = \bigoplus_{j \geq 0}I_j$ is a homogeneous ideal of $R = k[ x_0, x_1, \ldots , x_n]$ ($k$ algebraically closed of characteristic 0), then the graded ring
$$
A = \bigoplus_{j \geq 0}A_j = \bigoplus_{j \geq 0} \left( {R_j \over I_j} \right)
$$
is a {\it standard graded $k$-algebra}.

The function $H(A,-):\N \rightarrow \N$ defined by
$$
H(A,j) = \dim_kA_j = \dim_kR_j - \dim_k I_j
$$
is called the {\it Hilbert function of the ring $A$}.  It is well known that there is a polynomial $P(z) \in \Q[z]$ (called the {\it Hilbert polynomial of $A$}) with the property that, for all integers $t \gg 0$, $H(A,t) = P(t)$.  I.e., the eventual behavior of the function $H(A,-)$ is that of a polynomial with rational coefficients.  Moreover, the degree of the polynomial $P(z)$ is one less than the Krull dimension of $R/I$.

If $\X$ is a closed subscheme of $\p^n$ with defining homogeneous ideal $I = I_\X$ then we will often use $S_\X = R/I_\X$, instead of $A = R/I$, to denote the homogeneous coordinate ring of $\X$.  In this case the function $H(A,-)$ will be denoted either $H(S_\X,-)$ or simply $H_\X(-)$, and called {\it the Hilbert function of $\X$}.  In like fashion, the Hilbert polynomial of $A$ is usually referred to as {\it the Hilbert polynomial of $\X$}.  Since the Hilbert function and Hilbert polynomial of $\X$ encode a great deal of interesting geometric information about $\X$, these objects have long been the subject of intensive study.

The importance of both Macaulay's and Green's functions are a consequence of the fact that they give significant information about Hilbert functions of standard graded $k$-algebras and hence about the geometry of projective varieties.  We now recall the exact roles of these functions.

\begin{Thm}[\cite{G1}, Chapter 5 \cite{KR}, \cite{M}, \cite{St}]\label{T:201}
 Let $I\subset R$ be a homogeneous ideal and let $h \in R_1$ be a general linear form. If we set $A=R/I$ then, for all $d\geq 1$ we have the following inequalities.
\begin{enumerate}
\item [(a)] {\rm Macaulay's Theorem:} $H(A,d+1) \leq H(A,d)^{\mac d}$.
\item[(b)] {\rm Green's Hyperplane Restriction Theorem:} $H(A/hA,d) \leq H(A,d)_{\mac d}.$
\end{enumerate}
\end{Thm}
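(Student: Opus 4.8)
The plan is to reduce both inequalities to purely combinatorial statements about monomial ideals, and ultimately to lexsegment ideals, by means of Hilbert-function-preserving passages. First I would replace $I$ by an initial ideal: for any term order $\sigma$ the initial ideal $\ini_\sigma(I)$ is a monomial ideal with $H(R/I,-) = H(R/\ini_\sigma(I),-)$, so for part (a), in which only $H(A,-)$ appears, it suffices to establish the bound for monomial ideals. More usefully, after a general change of coordinates the generic initial ideal $\Gin(I)$ shares the Hilbert function of $I$ and, since $\mathrm{char}\,k = 0$, is strongly stable (Borel-fixed); this extra structure is what makes the combinatorics tractable and is essential for the hyperplane-section statement in (b).

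The combinatorial heart of (a) is an extremal statement. Fixing $d$, let $W \subseteq R_d$ be a vector space and let $W_{\lex}$ be the span of the $\dim_k W$ largest monomials of degree $d$ in the lexicographic order. The Macaulay lemma asserts $\dim_k(R_1\cdot W) \geq \dim_k(R_1\cdot W_{\lex})$, and for the lexsegment $W_{\lex}$ the quantity $\dim_k(R_1\cdot W_{\lex})$ is computed directly from the $d$-binomial expansion of $\dim_k W$ and equals exactly $(\dim_k W)^{\mac d}$. Applying this with $W = A_d$, i.e. to the lexsegment ideal carrying the Hilbert function of $A$, yields $H(A,d+1) \leq H(A,d)^{\mac d}$. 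The usual route to the extremal statement is a compression argument: one replaces $W$, variable by variable, by its lex-compressed version, checking that each elementary move does not increase $\dim_k(R_1\cdot W)$, until $W$ is a lexsegment.

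For part (b) I would again reduce to a monomial count, but now the term order and the genericity of $h$ are essential. After a general linear change of coordinates one may take $h = x_n$ (the last variable) and work in the reverse lexicographic order; the key fact is that for generic coordinates the restriction modulo $x_n$ can be computed on the initial-ideal side, so that $H(A/hA,d)$ equals the number of degree-$d$ monomials outside $\Gin(I)$ not involving $x_n$. Passing to the associated lexsegment ideal and invoking the corresponding Kruskal--Katona/Green extremal lemma for the ``lower pointy bracket'' operation produces the bound $H(A/hA,d) \leq H(A,d)_{\mac d}$.

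The main obstacle, in both parts, is the extremal combinatorial lemma asserting that the lexsegment is optimal; this is where all the genuine content lies, and the compression (or shifting) arguments must be carried out so that each elementary move provably does not increase, respectively decrease, the relevant dimension. For (b) there is an additional subtlety absent from (a): one must justify that the general linear form $h$ interacts correctly with $\Gin(I)$ so that the hyperplane section is read off from the monomials not divisible by $x_n$. This genericity is genuinely needed --- the inequality can fail for special $h$ --- and establishing it rigorously, whether through semicontinuity of the ranks of the multiplication maps by $h$ or through the structure of $\Gin(I)$ in reverse-lexicographic coordinates, is the delicate point.
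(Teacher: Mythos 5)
This theorem is not proved in the paper at all: it is quoted as a classical result, with the proofs delegated to the cited sources \cite{M}, \cite{St}, \cite{G1}, and Chapter 5 of \cite{KR}. Your outline is precisely the standard route taken in those references --- pass to $\Gin(I)$ in characteristic $0$, reduce part (a) to the lex-segment extremal lemma by compression, and handle part (b) in rev-lex coordinates with $h = x_n$ generic --- so it is consistent with the cited proofs; note only that, as you yourself acknowledge, what you have written is a strategy sketch in which the two extremal combinatorial lemmas (that lex-segments minimize $\dim_k(R_1 \cdot W)$, and the corresponding statement for the restriction bound) are identified but not established, and those lemmas carry essentially all of the content.
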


\medskip In view of Macaulay's and Green's Theorems, it is not surprising that we will often be discussing binomial expansions for various values of the Hilbert function of some graded algebra $A = R/I$ (often when it is the coordinate ring of a closed subscheme $\X$ of $\p^n$, in which case $I = I_\X$ and $A = S_\X = R/I_\X$).  In this case we want to use some different terminology to describe features of the binomial expansion.

\begin{Def}\label{dthGotzmann}  If $A$ is a standard graded $k$-algebra (or $A = S_\X = R/I_\X$ for $\X$ a closed subscheme of $\p^n$) and $c = H(A,d)$ (or $c = H_\X(d)$) then we will refer to the length of the $d^{th}$ Macaulay difference set of $c$ as the $d^{th}$ {\it Gotzmann persistence number of $A$ (or $\X$)}.

In the former case the $d^{th}$ Gotzmann persistence number will be denoted $G(A,d)$ while in the latter case it will be denoted $G(\X,d)$.

The number of elements in the multiset $M_d(H(A,d))$ (or $M_d(H_\X(d))$ which are equal to $\ell$ will be called the $\ell^{th}$ {\it Gotzmann coefficient of $H(A,d)$} (or of $H_\X(d)$) and denoted $C_\ell(A,d)$ (or $C_\ell(\X,d)$).
\end{Def}

\begin{Ex} \begin{itemize}
\item[(a)] Let $A = { {k[x_0,x_1,x_2]}/{(x_0^2, x_1^3, x_2^4)}}$.  Then $H(A,3) = 6$ and the 3-binomial expansion of 6 is
$$
6  = \binom{4}3 + \binom{2}2 + \binom{1}1.
$$
So, the third Macaulay difference set of $A$ is $M_3(H(A,3)) = (1,0,0)$.  Thus the $3^{rd}$ Gotzmann persistence number of $A$ is $G(A,3) = 3$.  Furthermore  $C_1(A,3) = 1$ and $C_0(A,3)= 2$.
\item[(b)] Let $\X$ be the rational normal curve in $\p^4$.  Then $H_\X(3) = 13$ and the 3-binomial expansion of 13 is
$$
13 = \binom{5}3 + \binom{3}2.
$$
So, the third Macaulay difference set of $\X$ is $M_3(H_\X(3)) = (2,1)$.  Thus $G(\X,3)=2$ is the $3^{rd}$ Gotzmann persistence number of $\X$.  The second Gotzmann coefficient of $H_\X(3)$ is $C_2(\X,3) = 1$, while the first Gotzmann coefficient of $H_\X(3)$ is  $C_1(\X,3) = 1$ and the zeroth Gotzmann coefficient of $H_X(3)$ is $C_0(\X,3) = 0$.
\end{itemize}
\end{Ex}

\medskip If, for a ring $A$ and an integer $d$, we have equality in Macaulay's Theorem then we say that {\it the Hilbert function of $A$ has maximal growth in degree $d$}.  The following Theorem (one of the principal results of Theorem 3.3 in \cite{AC} and proved independently in Section 5.5 in \cite{KR}) shows that maximal growth in degree $d$ is related to equality also in Green's bound.   More precisely:

\begin{Thm}[\cite{AC}, \cite{KR}]\label{T:202}
 Let $I$ be a homogeneous ideal in $R$ and let $A=R/I$. Let
 \begin{equation}\label{bieq}
 H(A,d+1)=\binom{(d+1)+a_{d+1}}{(d+1)}+ \cdots+\binom{\delta+a_{\delta}}{\delta}
\end{equation}
be the $(d+1)$-binomial expansion of $H(A,d+1)$.   Suppose that $d \geq \sat (I)$  {\rm (}where $\sat(I)$ denotes the saturation degree of $I$, i.e., the least degree $r$ for which $I$ and $I^{\sat}$ agree in all degrees $j \geq r${\rm )}.

 Then, the following statements are equivalent:
 \begin{enumerate}
 \item[(a)] $H(A,d+1)=H(A,d)^{\mac d}.$
 \item[(b)] $\delta>1$ and  $H(A/hA,d+1)=H(A,d+1)_{\mac {d+1}}$ for a general linear form $h$ in
 $A_1$.
 \end{enumerate}
 \end{Thm}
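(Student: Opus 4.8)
The plan is to reduce the stated equivalence to a purely combinatorial identity about binomial expansions, after first using the saturation hypothesis to control the kernel of multiplication by $h$. Writing $B = R/I^{\sat}$, the hypothesis $d \geq \sat(I)$ guarantees $A_j = B_j$ for all $j \geq d$; in particular $A_d = B_d$, $A_{d+1} = B_{d+1}$, and the map $\times h \colon A_d \to A_{d+1}$ coincides with $\times h \colon B_d \to B_{d+1}$. Since $I^{\sat}$ is saturated, the irrelevant ideal $\mathfrak m = (x_0,\ldots,x_n)$ is not associated to $B$, so $\depth B \geq 1$ and a general linear form $h$ is a nonzerodivisor on $B$ (by prime avoidance $h$ lies outside the finitely many proper subspaces $(P)_1$, $P \in \mathrm{Ass}(B)$). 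Hence that multiplication map is injective, i.e. $(0 :_A h)_d = 0$. Feeding this into the standard exact sequence
\[
0 \to (0:_A h)_d \to A_d \xrightarrow{\;h\;} A_{d+1} \to (A/hA)_{d+1}\to 0
\]
gives the key simplification $H(A/hA,d+1) = H(A,d+1) - H(A,d)$.

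With this in hand, condition (b) becomes exactly \lq\lq$\delta > 1$ and $H(A,d) = H(A,d+1) - H(A,d+1)_{\mac{d+1}}$\rq\rq, so the theorem is equivalent to a combinatorial claim about $c = H(A,d+1) = \sum_{i=\delta}^{d+1}\binom{k_i}{i}$ (its $(d+1)$-binomial expansion, with $k_i = i + a_i$): that $c = H(A,d)^{\mac d}$ if and only if $\delta > 1$ and $H(A,d) = c - c_{\mac{d+1}}$. The tool for everything is a single use of Pascal's rule $\binom{k_i}{i} = \binom{k_i-1}{i} + \binom{k_i-1}{i-1}$, which yields the identity $c - c_{\mac{d+1}} = \sum_{i=\delta}^{d+1}\binom{k_i - 1}{i-1}$.

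For (a)$\Rightarrow$(b): if $H(A,d)$ has $d$-binomial expansion $\sum_j \binom{m_j}{j}$, then $c = H(A,d)^{\mac d} = \sum_j \binom{m_j+1}{j+1}$ is automatically the $(d+1)$-binomial expansion of $c$, whose bottom index is one more than that of $H(A,d)$, hence $\geq 2$; this forces $\delta > 1$. Running the same display through Pascal's rule gives $c - c_{\mac{d+1}} = \sum_j \binom{m_j}{j} = H(A,d)$, which combined with the exact-sequence computation is precisely (b). For (b)$\Rightarrow$(a): the hypotheses give $H(A,d) = \sum_{i=\delta}^{d+1}\binom{k_i-1}{i-1}$, and because $\delta > 1$ the entries $k_i - 1$ are strictly decreasing, satisfy $k_i - 1 \geq i-1$, and bottom out at index $\delta - 1 \geq 1$, so this is a legitimate $d$-binomial expansion of $H(A,d)$; applying $-^{\mac d}$ returns $\sum_{i=\delta}^{d+1}\binom{k_i}{i} = H(A,d+1)$, i.e. maximal growth.

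The main obstacle is the first paragraph: one must pin down that the saturation hypothesis is invoked in exactly the degrees $d$ and $d+1$, and that a \emph{general} $h$ really kills $(0:_A h)_d$. Once that reduction is made, the rest is bookkeeping with Pascal's rule, the only delicate point being the role of $\delta > 1$: it is exactly the condition guaranteeing that the candidate $d$-binomial expansion of $H(A,d)$ does not run past index $1$, equivalently that $c$ lies in the image of $-^{\mac d}$ (which always raises the bottom index to at least $2$). I would also dispatch the degenerate Artinian case, where $I^{\sat} = R$ and $A_d = A_{d+1} = 0$ for $d \geq \sat(I)$, so that one may assume $H(A,d+1) > 0$ throughout.
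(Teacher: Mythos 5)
Your proof is correct, but there is nothing internal to compare it to: the paper states Theorem~\ref{T:202} as a quoted result from \cite{AC} (Theorem 3.3) and \cite{KR} (Section 5.5) and gives no proof of its own. Your route is a genuinely self-contained alternative to what those sources do (in \cite{AC} the result sits inside a framework of generic initial ideals and Betti numbers of lex-segment ideals, and in \cite{KR} inside the combinatorics of lex-segment spaces): you isolate exactly the two ingredients that matter. First, the hypothesis $d \geq \sat(I)$ enters only through the vanishing of $(0:_A h)_d$, proved by identifying $A_d \to A_{d+1}$ with the corresponding map on $R/I^{\sat}$ and using prime avoidance on the finitely many associated primes (none equal to the irrelevant ideal, since $I^{\sat}$ is saturated; this needs $k$ infinite, which the paper assumes); this converts condition (b) into the numerical identity $H(A,d) = H(A,d+1) - H(A,d+1)_{\mac{d+1}}$. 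Second, the condition $\delta > 1$ enters only as the requirement that $\sum_{i}\binom{k_i-1}{i-1}$ be a legitimate $d$-binomial expansion — equivalently, that $H(A,d+1)$ lie in the image of $-^{\mac{d}}$ — after which both implications are a single application of Pascal's rule plus uniqueness of Macaulay expansions. Your dispatch of the Artinian case is also right, since the statement presupposes a $(d+1)$-binomial expansion, i.e. $H(A,d+1)>0$. One small point of comparison: because you run the exact-sequence identity in both directions, your argument uses $d \geq \sat(I)$ for (a)$\Rightarrow$(b) as well, whereas the paper remarks that this direction holds unconditionally; that stronger remark is recoverable from your setup by replacing the identity with the two inequalities $H(A/hA,d+1) \geq H(A,d+1) - H(A,d)$ (from the exact sequence, with no assumption on the kernel) and $H(A/hA,d+1) \leq H(A,d+1)_{\mac{d+1}}$ (Green's Theorem), which pinch to equality when (a) holds.
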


Note that (a) implies (b) is true without the condition $d\geq \sat (I)$. However, the condition \lq\lq$\delta > 1$" is needed in Theorem~\ref{T:202}, as the following corollary shows (see \cite{AC}).

 \begin{Coro}\label{C:203}
Under the hypotheses of Theorem \ref{T:202}; if
$\delta =1$ in equation $(\ref{bieq})$ and $H(A/hA,d+1)=H(A,d+1)_{\mac {d+1} }$, then
\[H(A,d)^{\mac d}=H(A,d+1)+a_2-a_1+1.\]
\end{Coro}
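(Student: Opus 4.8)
The plan is to reduce the statement to a purely combinatorial identity about binomial expansions, starting from the hyperplane-section exact sequence. Since $d \geq \sat(I)$, the algebra $A$ agrees in degrees $\geq d$ with its saturation $\bar A = R/I^{\sat}$, which has positive depth (the Artinian case is vacuous, since then $H(A,d+1) = 0$ and there is no $\delta$); hence a general linear form $h$ is a nonzerodivisor on $\bar A$, and the multiplication map $A_d \xrightarrow{\cdot h} A_{d+1}$ is injective. The four-term exact sequence
\[
0 \to (0:_A h)_d \to A_d \xrightarrow{\cdot h} A_{d+1} \to (A/hA)_{d+1} \to 0
\]
then gives $H(A/hA, d+1) = H(A, d+1) - H(A,d)$, and combining this with the assumed Green equality $H(A/hA, d+1) = H(A,d+1)_{\mac{d+1}}$ yields the key identity $H(A,d) = H(A,d+1) - H(A,d+1)_{\mac{d+1}}$.

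Next I would compute the right-hand side termwise. Writing the $(d+1)$-binomial expansion of $H(A,d+1)$ as in \eqref{bieq} (with $\delta = 1$, so all summands $\binom{i+a_i}{i}$ for $i = 1,\ldots,d+1$ are present) and subtracting the corresponding summands of $H(A,d+1)_{\mac{d+1}}$, Pascal's rule $\binom{i+a_i}{i} - \binom{i+a_i-1}{i} = \binom{(i-1)+a_i}{i-1}$ gives $H(A,d) = \sum_{j=0}^{d}\binom{j+a_{j+1}}{j}$. The top indices $j + a_{j+1}$ are strictly increasing in $j$, so this is almost a $d$-binomial expansion; the only defect is the degree-$0$ summand $\binom{a_1}{0} = 1$, which is not permitted. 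Absorbing it into the lower terms produces the canonical $d$-binomial expansion of $H(A,d)$, whose Macaulay difference set $M_d(H(A,d))$ I would record, the essential feature being that its bottom entry equals $a_2 + 1$.

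Finally, by Remark~\ref{nochange} the tuple $M_{d+1}(H(A,d)^{\mac{d}})$ equals $M_d(H(A,d))$, so I can read off the $(d+1)$-binomial expansion of $H(A,d)^{\mac{d}}$ directly from the difference set just computed. Comparing it term-by-term with the expansion of $H(A,d+1)$, the summands of high degree coincide in the two expansions and cancel, and the remaining low-degree terms telescope. The hockey-stick identity $\sum_{i=0}^{n}\binom{a+i}{i} = \binom{(n+1)+a}{n}$ collapses the surviving terms so that they always contribute $a_2 + 2$, from which subtracting the degree-$1$ summand $\binom{1+a_1}{1} = a_1 + 1$ of $H(A,d+1)$ produces exactly $a_2 - a_1 + 1$.

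I expect the main obstacle to be the middle step: when $a_2 = a_3 = \cdots$ the absorption of the degree-$0$ term cascades upward through several degrees (each collision being resolved by $\binom{n}{i} + \binom{n}{i-1} = \binom{n+1}{i}$), so the shape of the canonical expansion of $H(A,d)$ depends on the length of this run of equal $a_i$'s. The goal is to show the final answer is nonetheless independent of that length, and the hockey-stick identity is precisely what makes the computation uniform: no matter where the run of equal entries ends, the surviving binomials always sum to $a_2 + 2$, so the gap $H(A,d)^{\mac{d}} - H(A,d+1)$ is always $a_2 - a_1 + 1$.
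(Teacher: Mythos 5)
Your proof is correct, but there is nothing in the paper to compare it against: the paper does not prove Corollary~\ref{C:203} at all, stating it with a pointer to \cite{AC}. Judged on its own, both pillars of your argument are sound. First, since $d \geq \sat(I)$, the graded pieces $A_d$ and $A_{d+1}$ agree with those of $R/I^{\sat}$, and a general linear form is a nonzerodivisor on $R/I^{\sat}$ (the ideal is saturated and $k$ is infinite; the case $I^{\sat}=R$ is indeed vacuous since then $H(A,d+1)=0$ admits no binomial expansion), so the four-term multiplication sequence plus the assumed Green equality yields the key identity $H(A,d)=H(A,d+1)-H(A,d+1)_{\mac{d+1}}$. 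Second, the termwise Pascal subtraction gives $H(A,d)=\sum_{j=0}^{d}\binom{j+a_{j+1}}{j}$, and, setting $\alpha=a_2$ and $\beta=\max\{\ell \mid a_\ell=a_2\}$, the hockey-stick identity absorbs the stray summand $\binom{a_1}{0}=1$ into a single term $\binom{\alpha+\beta}{\beta-1}$, producing the canonical $d$-binomial expansion of $H(A,d)$ with bottom Macaulay entry $a_2+1$ independently of $\beta$; then Remark~\ref{nochange} lets you write down the $(d+1)$-expansion of $H(A,d)^{\mac{d}}$, the high-degree terms cancel against those of $H(A,d+1)$, and the hockey-stick identity collapses what survives to $(a_2+2)-(a_1+1)=a_2-a_1+1$, also in the degenerate case $\beta=d+1$ where all $a_i$ coincide. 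It is worth observing that your computation is precisely the Pascal/hockey-stick collapsing, governed by the same $\alpha$ and $\beta$, that the paper performs downstream when it \emph{applies} this corollary in the $\delta=1$ cases of Lemma~\ref{L:303} and Proposition~\ref{P:306}; your argument runs it in the reverse direction (recovering $H(A,d)$ from $H(A,d+1)$ and then computing $H(A,d)^{\mac{d}}$) and in effect reconstructs, inside the paper, the proof that the authors delegate to \cite{AC}.
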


\vskip .5cm
Recall that a polynomial $p(z) \in \Q[z]$ is called a {\it numerical polynomial} if $p(a) \in \Z$ whenever $a \in \Z$.  For us, the most important examples of numerical polynomials are
$$
b_i(z) = \binom{z}{i}:= {{z(z-1)\cdots (z-(i-1))} \over i!} .
$$
So, $b_0(z) = 1,\ b_1(z) = z,\ b_2(z) = z(z-1)/2, \ldots $.  Notice that $b_d(z)$ is a polynomial of degree exactly $d$ and hence the $\{ b_i(z) \mid i = 0, 1,\dots \}$ are a vector space basis for $\Q[z]$ over $\Q$.

It is a classical theorem that a numerical polynomial $p(z)$, of degree $d$, can be written
$$
p(z) = \alpha_db_d(z) + \alpha_{d-1}b_{d-1}(z) + \cdots + \alpha_1b_1(z) + \alpha_0
$$
where the $\alpha_i$ are all in $\Z$.

Typical examples of numerical polynomials are the Hilbert polynomials of standard graded $k$-algebras and one of the main questions which we will consider in this paper concerns such polynomials. That question is:

\begin{ques}
Let $\X \subset \p^n$ be a nondegenerate integral (or sometimes just reduced) scheme with Hilbert polynomial $P_\X(z) \in \Q[z]$.  What can we say about $P_\X(z)$?
\end{ques}

An important ingredient in trying to answer this question is the concept of {\it maximal growth}, which we defined above. It is not difficult to see (it follows from Macaulay's Theorem and a good look at Pascal's triangle) that: given an ideal $I \subset R$ and $A = R/I$, there is an integer $d$ (which depends on $I$) such that for all $j \geq d$ we have maximal growth in degree $j$, i.e.,
$$
H(A, j+1) = H(A,j)^{\mac{j}},
$$
It follows from Remark~\ref{nochange} that for all $j \geq d$ the $j^{th}$ Macaulay difference set of $H(A,j)$ doesn't change.

In view of this last observation, the following definitions all make sense.

\begin{Def}\label{Gotzmann}

\begin{enumerate}
\item[(a)] If $I$ is a homogeneous ideal of $R$ and $A = R/I$ then the eventually constant Macaulay difference sets for the numbers $H(A,t)$ will be called the {\it Gotzmann difference set of $A$.}  Since the Hilbert polynomial of $A$, $P_{A}(z)$, eventually always takes on the values of $H(A,-)$, we also refer to the Gotzmann difference set of $A$ as the {\it Gotzmann difference set of} $P_{A}(z)$.
\item[(b)] The number of elements in the Gotzmann difference set of $A$ will be called the {\it Gotzmann number of $A$} and denoted $G(A)$.
\item[(c)]  The number of times that the integer $i$ appears in the Gotzmann difference set of $A$ will be denoted $C_i(A)$ and called the {\it $i^{th}$ Gotzmann coefficient of $A$}.
\item[(d)] If $I = I_\X$ is the homogeneous ideal of a closed subscheme $\X \subset \p^n$ then these objects will be referred to as the {\it Gotzmann difference set of $\X$},  {\it the Gotzmann number of $\X$}, and the {\it $i^{th}$ Gotzmann coefficient of $\X$} respectively (see Iarrobino-Kleiman \cite{I_K}).  We will write $G(\X)$ for the Gotzmann number of $\X$ and $C_i(\X)$ for the $i^{th}$ Gotzmann coefficient of $\X$.
\end{enumerate}
\end{Def}

\begin{Remk}\label{delta>1} Let $A = R/I$ be as above and suppose that $d$ is an integer with the property that we have maximal growth for the Hilbert function of $A$ in degree $j$ for all $j \geq d$.

Let $H(A,d+1)$ be as in equation~\ref{bieq}.  Let's suppose (for the moment) that $\delta = 1$ in that expression.  Since we are in the range where $H(A,-)$ has maximal growth, we can write
$$
H(A,d+2) = \binom{d+2+a_{d+1}}{d+2} + \cdots + \binom{\delta + 1 + a_{\delta}}{\delta + 1}
$$
and then rewrite it as
$$
H(A,d+2) = \binom{d+2+a^\prime_{d+2}}{d+2} + \cdots + \binom{(\delta + 1) + a^\prime_{\delta +1}}{\delta + 1}.
$$
But now $\delta + 1 > 1$ and so the condition of Theorem~\ref{T:202} b) on \lq\lq$\delta$" is now satisfied.
\end{Remk}

\medskip




There is another observation we can make when the Hilbert function of $A = R/I$ has maximal growth in degrees $j \geq d$. This observation shows us how to compute $P_A(z)$, the Hilbert polynomial of $A$, from the Gotzmann difference set of $A$.    To explain this, let $c = H(A,d)$ and write the $d$-binomial expansion of $c$ as
$$
c = \binom{k_d}{d} + \binom{k_{d-1}}{d-1} + \cdots + \binom{k_\delta}{\delta}, k_d > k_{d-1} > \cdots > k_\delta > 0.
$$
Now rewrite this, first as
$$
c = \binom{d+a_d}{d} + \binom{(d-1) + a_{d-1}}{d-1}+ \cdots + \binom{\delta + a_\delta}{\delta}, a_d \geq a_{d-1} \geq \cdots \geq a_\delta \geq 0
$$
and then as
$$
c = \binom{d+a_d}{a_d} + \binom{d-1+a_{d-1}}{a_{d-1}} + \cdots + \binom{d-(d-\delta) + a_\delta}{a_\delta}.
$$

Since we are assuming we have maximal growth in degree $d$, we have
$$
H(A, d+1) = \binom{d+1 + a_d}{d+1} + \binom{d+a_{d-1}}{d} + \cdots + \binom{d-(d-\delta) + 1 + a_\delta}{\delta + 1}
$$
which we can rewrite as
$$
H(A, d+1) = \binom{d+1 + a_d}{a_d} + \binom{d+a_{d-1}}{a_{d-1}}+\cdots + \binom{d-(d-\delta) + 1 + a_\delta}{a_\delta}.
$$
Note that $\{ a_d, a_{d-1}, \ldots , a_\delta \}$ is the $d^{th}$ Macaulay difference set of $H(A,d)$ and also of $H(A,d+1)$.

But now, consider the numerical polynomials
$$
b_{a_d}(z+a_d),\ \ \ b_{a_{d-1}}(z -1 +a_{d-1}),\ \ \  \ldots \ \ \ , b_{a_\delta}(z -(d-\delta) + a_\delta )
$$
and let
$$
L(z) = \sum _{i=\delta}^d b_{a_i}(z+a_i-(d-i)).
$$
Clearly $L(d) = c = H(A,d)$ and $L(d+1) = c^{\mac{d}} = H(A, d+1)$.  I.e., the polynomial $L(z)$ describes the value of the Hilbert function in both degrees $d$ and $d+1$.

We can obviously continue this argument for as long as the growth of $H(A,-)$ is maximal.  So, given our assumption that we have maximal growth in degree $j$ for all integers $j \geq d$, we obtain that $L(z) = P_A(z)$, the Hilbert polynomial of $A =R/I$.

Notice also that since we have only made a linear changes of variables on the polynomials $b_{a_i}(z)$, the polynomial $b_{a_i}(z+a_i-(d-i))$ is again a polynomial of degree $a_i$.

By standard results about Hilbert polynomials it then follows that if
$$(a_d, a_{d-1}, \ldots, a_\delta)$$ is the Gotzmann difference set of the ring $A = R/I$, then $a_d$ is one less than the Krull dimension of $A$.

In the case where $I = I_\X$ is the defining ideal of a closed subscheme $\X$ of dimension $r$ then $a_d = r$ and all the numbers in the Gotzmann difference set of $\X$ are $\leq r$.  In particular the $i^{th}$ Gotzmann coefficient of $\X$ can be non-zero only if $0\leq i \leq r$.

Since, as we noted above, the polynomials $\binom{z}{i}, i = 0,1,\ldots $ are a $\Z$-basis for the free $\Z$-module of numerical polynomials in $\Q[z]$ and since the degree of $\X$ is the integer coefficient of the term of highest degree when $P_\X(z)$ is written in terms of this basis, it follows that if $\mathcal G = (a_d, \ldots , a_\delta) $ is the Gotzmann difference set of $\X$ then the degree of $\X$ is nothing more than the number of elements in $\mathcal G$ equal to $r=a_d$ i.e.,
\begin{equation}\label{degree}
C_r(\X) = \deg \X.
\end{equation}

We can use these remarks to observe, for example, that if $\X$ is a finite set of $s$ points in $\p^n$ then the Gotzmann difference set of $\X$ is $\{ 0,0, \ldots ,0 \}$ where $C_0(\X) = G(\X) = s$.

Let's look at another example, this time when $\X$ is a curve.  We know, in this case, that all the elements in the Gotzmann difference set are either $0$ or $1$ and that the number of $1$'s is the degree of $\X$.

\begin{Ex}\label{cubics}
 \begin{enumerate}

\item[(a)] Let $\mathcal C$ be the rational normal curve in $\p^3$.  Since that curve has degree 3, we know that the Gotzmann difference set has  its first three entries equal to 1 since $3 = \deg \mathcal C = C_1(X)$ is the $1^{th}$ Gotzmann coefficient of $\mathcal C$.  The only question that remains is: what is the $0^{th}$ Gotzmann coefficient of $\mathcal C$, i.e., what is $C_0(\mathcal C)$?

Recall that the Hilbert function of the rational normal curve in $\p^3$ is given by the sequence
$$
1,\ 4,\ 7,\ 10,\ 3z+1,  \ldots
$$
where the notation gives the first few values of the Hilbert function and then the eventual behavior of the succeeding terms.

Since
$$
13 = \binom{5}{4} + \binom{4}{3} + \binom{3}{2} + \binom{1}{1}
$$
we can easily see that maximal growth begins in degree 4.   It follows that the Hilbert polynomial of the rational normal curve in $\p^3$ is
$$
P_{\mathcal C}(z) = 3z+1 = \binom{z+1}{1} + \binom{z}{1} + \binom{z-1}{1} + \binom{z-3}{0} .
$$
Thus, the Gotzmann difference set of $\mathcal C$ is $(1,1,1,0)$ and the $0^{th}$ Gotzmann coefficient of $\mathcal C$ is 1.

\item[(b)] Now let $\mathcal C$ be a plane cubic curve in $\p^3$.  In this case the Hilbert function of $\mathcal C$ is
$$
1, \ 3, \ 6, \ 9, \ 3z, \ldots
$$
and so the Hilbert polynomial of $\mathcal C$ is $P_{\mathcal C}(z) = 3z$.  It follows that the Gotzmann difference set of $\mathcal C$ is $(1,1,1)$ and so the $1^{th}$ Gotzmann coefficient of $\mathcal C$ is 3 but $C_0(\mathcal C) = 0$.

\end{enumerate}
\end{Ex}

As one can see from these considerations about maximal growth, it is important to know when we can be sure that maximal growth persists.  In \cite{Preser} Preser made the following definition:

\begin{Def}\label{perindex} Let $I$ be a homogeneous ideal of $R$ and let $A = R/I$.  The {\it persistence index} of $A$ is the least integer $d$ such that the Hilbert function of $A$ has maximal growth in all degrees $\geq d$.
\end{Def}

There are some very interesting characterizations of the persistence index, which we summarize in the following theorem.

\begin{Thm} \label{persistence number} Let $I \subset R$ be a homogeneous ideal and let $A = R/I$.  Then
\begin{enumerate}
  \item [a)] the persistence index of $A$ is the maximum degree of a minimal generator for the lex segment ideal $J$ with the property that $B = R/J$ has the same Hilbert function as $A$.
  \item [b)] if $I$ is a saturated ideal, the persistence index of $A$ is $G(A)$, the Gotzmann number of $A$.
  \item [c)] if $\X \subset \p^n$ is a closed subscheme and $H_\X$ its Hilbert function, then the least integer $d$ for which
$$
 H_\X(j) ^{\mac{j}} =  H_\X(j+1) \hbox{ for all } j \geq d
$$
is $d = G(\X)$, the Gotzmann number of $\X$.
\end{enumerate}
\end{Thm}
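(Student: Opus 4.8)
The plan is to prove (a), (b), (c) in that order, deducing (c) as the special case of (b) for the automatically saturated ideal $I_\X$. For (a) I would work with the lex segment ideal $J\subset R$ satisfying $H(R/J,-)=H(A,-)$, whose existence and uniqueness is Macaulay's theorem; write $B=R/J$. The key lemma I would isolate is purely combinatorial: $B$ has maximal growth in degree $d$ if and only if $J$ has no minimal generator in degree $d+1$. To prove it I would invoke the two standard facts about a lex segment $L\subseteq R_d$, namely that $R_1\cdot L$ is again a lex segment and that the lex segment realizes Macaulay's bound, so that $\dim_k(R/(R_1\cdot L))_{d+1}=\bigl(\dim_k(R/L)_d\bigr)^{\mac d}$. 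The number of minimal generators of $J$ in degree $d+1$ is $\dim_k J_{d+1}-\dim_k(R_1\cdot J_d)$, and by these facts this difference is exactly the deficit $H(B,d)^{\mac d}-H(B,d+1)\ge 0$ in Macaulay's inequality; hence maximal growth in degree $d$ is equivalent to $J_{d+1}=R_1\cdot J_d$, i.e.\ to the absence of a minimal generator in degree $d+1$. Granting this, the persistence index of $A$ (which depends only on the Hilbert function, hence equals that of $B$) is the least $d$ such that $J$ has no minimal generator in any degree $>d$, which is precisely the top degree of a minimal generator of $J$, proving (a).

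For (b), write $p$ for the persistence index of $A$ and $G=G(A)$, and establish two inequalities. The bound $G\le p$ holds for every ideal: maximal growth in degree $d$ is equivalent to the equality $M_d(H(A,d))=M_{d+1}(H(A,d+1))$ of Macaulay difference sets (one direction is Remark~\ref{nochange}, the other because a number is determined by its difference set at a fixed level), so by definition of $p$ the difference set is constant, equal to the Gotzmann difference set of length $G$, for all degrees $\ge p$; evaluating in degree $p$, a $p$-binomial expansion has at most $p$ summands, whence $G\le p$. For the reverse bound $p\le G$ saturation enters. From the explicit shape of the Gotzmann difference set one checks directly that $P_A(d+1)=P_A(d)^{\mac d}$ for all $d\ge G$, since the $d$-binomial expansion of $P_A(d)$ is valid precisely once its smallest index $d-G+1$ is positive. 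On the other hand, because $I$ is saturated, Gotzmann's regularity theorem yields $\reg(A)\le G$, so $H(A,d)=P_A(d)$ for all $d\ge G$. Combining, for every $d\ge G$ we get $H(A,d+1)=P_A(d+1)=P_A(d)^{\mac d}=H(A,d)^{\mac d}$, i.e.\ maximal growth in all degrees $\ge G$; thus $p\le G$ and therefore $p=G$.

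Finally (c) is immediate: the ideal $I_\X$ of a closed subscheme is saturated, and the least integer $d$ with $H_\X(j)^{\mac j}=H_\X(j+1)$ for all $j\ge d$ is by definition the persistence index of $S_\X$, which by (b) equals $G(S_\X)=G(\X)$. I expect the main obstacle to be the reverse inequality $p\le G$ in (b). The delicate point is that the lex ideal $J$ of part (a) need \emph{not} be saturated even when $I$ is — for the rational normal curve in $\p^3$, for instance, the relevant lex ideal carries a minimal generator divisible by the last variable — so one cannot simply argue that $J$ is as well behaved as a saturated lex ideal; the saturation of $I$ must instead be fed in through the regularity estimate $\reg(A)\le G$, which forces $H(A,-)$ to agree with $P_A$ already from degree $G$ onward. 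The combinatorial lemma underlying (a) is the other point meriting care, although it is in essence Macaulay's theorem read off on lex segments.
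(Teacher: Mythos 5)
Your proposal is correct, but the comparison here is asymmetric: the paper gives essentially no proof of this theorem. It observes only that (c) is an immediate consequence of (b) --- exactly your deduction, via saturatedness of $I_\X$ --- and for (a) and (b) it simply cites \cite{AC} and \cite{KR}. So what you have done is supply a self-contained proof of results the paper outsources to the literature. Your key lemma for (a), that $R/J$ has maximal growth in degree $d$ if and only if the lex ideal $J$ has no minimal generator in degree $d+1$, argued by identifying the number of new generators $\dim_k J_{d+1}-\dim_k(R_1\cdot J_d)$ with the deficit $H(B,d)^{\mac d}-H(B,d+1)$, is sound and is the standard mechanism underlying the cited references (the title of \cite{AC} already ties Gotzmann numbers to Betti numbers of the lex-segment ideal). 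Your proof of (b) is also correct on both sides: $G(A)\le p$ from constancy of the Macaulay difference sets (Remark~\ref{nochange}) together with the bound that a $p$-binomial expansion has at most $p$ summands, and $p\le G(A)$ by feeding saturation through Gotzmann's Regularity Theorem~\ref{regularity theorem} and Theorem~\ref{heqp}, after checking that the Gotzmann representation of $P_A(d)$ is a legitimate $d$-binomial expansion precisely when $d\ge G(A)$. It is worth noting that the paper itself deploys this same circle of ideas later, in the proof of Lemma~\ref{L:501}, where the non-saturated generalization $G(R/I)=\max\{G(R/I^{\sat}),\sat(I)\}$ is obtained from lex-ideal generator degrees, the Bigatti--Hulett--Pardue comparison $\beta_{p,j}(I)\le\beta_{p,j}(I^{\lex})$, and regularity; your argument for (b) is a more direct version of that strategy for saturated $I$, applying Gotzmann regularity to $I$ itself and bypassing the Betti-number comparison. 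Your closing caveat is also well taken and correct: the lex ideal $J$ is generally not saturated even when $I$ is (for the twisted cubic in $\p^3$ it acquires the generator $x_0x_3^2$), which is exactly why the reverse inequality must come from regularity of $I$ rather than from any good behavior of $J$.
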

\medskip

Observe that $c)$ is an immediate consequence of $b)$ and both $a)$ and $b)$ are proved in \cite{AC} and \cite{KR}. 

\medskip 

We saw earlier that if there is maximal growth in the Hilbert function of $A$ for all $t \geq d$ then the Macaulay difference set of $H(A,d)$ determines the Hilbert polynomial, $P_A(z)$, and, moreover, $H(A,t) = P_A(t)$ for all $t \geq d$.  This last condition is strongly connected with the notion of {\it Castelnuovo-Mumford regularity}, which we now recall.

\vskip .5cm
Let $M$ be a finitely generated graded $R$-module and let
$$
0\rightarrow E_n \rightarrow \cdots\rightarrow E_1\rightarrow E_0 \rightarrow M \rightarrow 0
$$
be a minimal graded free resolution of $M$, where
$$
E_p = \oplus_{j}R(-j)^{\beta_{p,j}} .
$$
We call $\beta_{p,j}$ the $p^{th}$ {\it Betti} number of degree $j$.  We say that $M$ is {\it $\ell$-regular} if, whenever $\beta_{p,j} \neq 0$ we have $j-p \leq \ell$.  The {\it Castelnuovo-Mumford regularity of $M$} (or simply the {\it regularity of $M$}) is the least integer $\ell$ so that $M$ is $\ell$-regular.  We will write $\reg(M) = \ell$.

One of the more useful (for us) properties of the regularity of a saturated ideal $I$ in the polynomial ring $R$ is the following:

\begin{Thm}\label{heqp} Let $I$ be a saturated ideal in the polynomial ring $R$. If $H(R/I,-)$ and $P(R/I,-)$ are the Hilbert function and polynomial, respectively, of $R/I$ then
\begin{equation}
H(R/I,d) = P(R/I,d) \hbox{ for all } d \geq \reg(I) - 1.
\end{equation}
\end{Thm}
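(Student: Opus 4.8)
The plan is to read both the Hilbert function and the Hilbert polynomial of $R/I$ off its minimal graded free resolution and then compare the two term by term. Writing the resolution as $0 \to E_p \to \cdots \to E_1 \to E_0 \to R/I \to 0$ with $E_p = \oplus_j R(-j)^{\beta_{p,j}}$ and $R = k[x_0,\ldots,x_n]$, additivity of the Hilbert function along an exact sequence gives
\[
H(R/I,d) = \sum_{p,j}(-1)^p \beta_{p,j}\,\dim_k R_{d-j},
\qquad
P(R/I,d) = \sum_{p,j}(-1)^p \beta_{p,j}\binom{d-j+n}{n},
\]
where in the first sum $\dim_k R_{d-j}$ is the true dimension, while in the second $\binom{d-j+n}{n}$ denotes the numerical polynomial $\binom{z+n}{n}$ evaluated at $z=d-j$. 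Thus the whole problem reduces to locating the degrees $d$ in which $\dim_k R_{d-j}$ and $\binom{d-j+n}{n}$ coincide for every $(p,j)$ that occurs in the resolution.

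Next I would record the elementary fact that $\dim_k R_m$ agrees with the value of $\binom{z+n}{n}$ at $z=m$ exactly when $m \ge -n$: for $m\ge 0$ both equal $\binom{m+n}{n}$, for $-n \le m \le -1$ both vanish, and only for $m < -n$ do they differ (the graded piece is zero while the polynomial is not). Hence the $(p,j)$-term of $H(R/I,d)$ equals the corresponding term of $P(R/I,d)$ as soon as $d-j \ge -n$, i.e. $j \le d+n$. Now the hypotheses enter: $\ell$-regularity of $R/I$ (in the sense defined above, with $\ell=\reg(R/I)$) forces $\beta_{p,j}\neq 0 \Rightarrow j \le \ell+p$, while saturation of $I$ forces $\depth(R/I)\ge 1$, so by the Auslander--Buchsbaum formula the projective dimension of $R/I$ is at most $n$ and every nonzero $\beta_{p,j}$ has $p \le n$. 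Combining these yields $\beta_{p,j}\neq 0 \Rightarrow j \le \ell + p \le \ell + n$.

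Finally I would assemble the estimate. For any $d \ge \ell$ and any $(p,j)$ with $\beta_{p,j}\neq 0$ one has $j \le \ell + n \le d+n$, so every term of $H(R/I,d)$ matches the corresponding term of $P(R/I,d)$, giving $H(R/I,d)=P(R/I,d)$. Since the resolution of $I$ is the truncation of that of $R/I$, so that $\beta_{p,j}(I)=\beta_{p+1,j}(R/I)$, one checks directly that $\reg(I)=\reg(R/I)+1$, i.e. $\ell=\reg(I)-1$, which is precisely the stated bound. The step I expect to be the crux is the passage through $\depth(R/I)\ge 1$: without the drop in projective dimension supplied by saturation one only obtains $p\le n+1$ and hence the weaker bound $d\ge \reg(I)$, so it is exactly saturation (via Auslander--Buchsbaum) that sharpens the conclusion to $d\ge \reg(I)-1$. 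A cleaner but less self-contained alternative is to invoke the Grothendieck--Serre formula $H(R/I,d)-P(R/I,d)=\sum_i(-1)^i\dim_k H^i_{(x_0,\ldots,x_n)}(R/I)_d$ together with the local-cohomology description of regularity: saturation kills the $i=0$ term, and $\ell$-regularity forces $H^i_{(x_0,\ldots,x_n)}(R/I)_d=0$ for all $i\ge 1$ once $d\ge \reg(I)-1$.
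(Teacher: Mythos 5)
Your proof is correct; note, however, that the paper itself supplies no proof of Theorem~\ref{heqp} --- it is quoted there as a standard property of Castelnuovo--Mumford regularity --- so your argument is necessarily an independent route. It is the standard resolution-theoretic one, and it is complete: the reduction to comparing $\dim_k R_{d-j}$ with the polynomial value $\binom{d-j+n}{n}$ term by term is right, the dichotomy (agreement exactly when $d-j\ge -n$, since for $-n\le d-j\le -1$ both vanish while for $d-j<-n$ the polynomial is nonzero) is right, and you correctly isolate the two inputs: $j\le \reg(R/I)+p$ from regularity of $R/I$, and $p\le n$ from $\depth(R/I)\ge 1$ via Auslander--Buchsbaum, the latter being exactly where saturation enters (saturation gives $H^0_{\mathfrak m}(R/I)=I^{\sat}/I=0$, hence positive depth). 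The bookkeeping $\reg(I)=\reg(R/I)+1$, valid because $I$ is a nonzero proper ideal so that $\beta_{0,j}(I)\neq 0$ forces $j\ge 1$, then converts your bound $d\ge\reg(R/I)$ into the stated $d\ge\reg(I)-1$. Your closing diagnosis is also accurate: without saturation one only gets $p\le n+1$, hence the weaker range $d\ge\reg(I)$, and the stronger bound genuinely fails in general (e.g.\ $I=(x_0,\ldots,x_n)$ has $\reg(I)=1$ but $H(R/I,0)=1\neq 0=P(R/I,0)$), so the hypothesis is not decorative. The local-cohomology alternative you sketch --- Grothendieck--Serre plus the cohomological characterization of regularity, with saturation killing the $H^0_{\mathfrak m}$ term and $\ell$-regularity killing $H^i_{\mathfrak m}(R/I)_d$ for $i\ge 1$ and $d\ge \reg(I)-1$ --- is equally valid, and is likely the formulation the authors had in mind when citing the result.
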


There is a wonderful theorem of G. Gotzmann which relates the regularity of the defining ideal of a closed subscheme $\X \subset \p^n$ to what we have discussed above.  More precisely

\begin{Thm}[\sc Gotzmann's Regularity Theorem]\label{regularity theorem}
Let $\X$ be a closed subscheme of $\p^n$ with defining ideal $I_{\X}$. If $G(\X)$ is the Gotzmann number of $\X$ then $I_\X$ is $G(\X)$-regular.
\end{Thm}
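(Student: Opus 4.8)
The plan is to prove the equivalent inequality $\reg(I_\X) \le G(\X)$ by comparing $I_\X$ with the lex-segment ideal that shares its Hilbert function, exploiting the fact that among all ideals with a fixed Hilbert function the lex ideal is extremal for the graded Betti numbers. The internal work of the paper has already done the hard part of identifying $G(\X)$ with the top generator degree of that lex ideal, via Theorem~\ref{persistence number}.

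First I would record the (automatic) hypothesis that $I_\X$, being the defining ideal of a closed subscheme, is saturated, so that Theorem~\ref{persistence number} applies to $A = S_\X = R/I_\X$. Let $J \subset R$ be the lex-segment ideal for which $R/J$ has the same Hilbert function as $A$. By part (a) of Theorem~\ref{persistence number} the persistence index of $A$ equals the largest degree of a minimal generator of $J$; and by part (b), since $I_\X$ is saturated, this persistence index is exactly $G(\X)$. Hence the highest degree of a minimal generator of $J$ is $G(\X)$.

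Next I would bring in the structure theory of strongly stable (Borel-fixed) monomial ideals: a lex-segment ideal is strongly stable, so its minimal graded free resolution is the Eliahou--Kervaire resolution, and a consequence of that resolution (Bayer--Stillman) is that the Castelnuovo--Mumford regularity of such an ideal coincides with the largest degree of a minimal generator. Combined with the previous step this gives $\reg(J) = G(\X)$. The final ingredient, and the one I regard as genuinely essential, is the extremality of the lex ideal: by the Bigatti--Hulett--Pardue theorem, among all homogeneous ideals of $R$ with a prescribed Hilbert function the lex-segment ideal has the largest graded Betti numbers $\beta_{p,j}$, and in particular the largest regularity. Applying this to $I_\X$ and $J$, which share a Hilbert function by construction, yields $\reg(I_\X) \le \reg(J) = G(\X)$, so $I_\X$ is $G(\X)$-regular.

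I expect the main obstacle to lie not in the chain of implications but in justifying the two external facts on which it pivots: that the regularity of a lex (more generally strongly stable) ideal is read off as the top degree of a minimal generator, and that the lex ideal maximizes the Betti numbers over its Hilbert function class. These are the substantive inputs; by contrast everything intrinsic to the present setting collapses, through Theorem~\ref{persistence number}, onto the already-proved identification of the persistence index with the Gotzmann number $G(\X)$. (As a consistency check, Theorem~\ref{heqp} together with the earlier observation that $H_\X(t)=P_\X(t)$ for all $t \ge G(\X)$ is compatible with, though weaker than, the bound $\reg(I_\X)\le G(\X)$ obtained here.)
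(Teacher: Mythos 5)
Your argument is correct, but there is nothing in the paper to compare it against: Theorem~\ref{regularity theorem} is recalled as Gotzmann's classical result and is never proved in the paper --- its role there is purely as an input (it is invoked in Proposition~\ref{P:301} and Theorem~\ref{T:311}). What you have written is essentially the modern lex-segment proof of Gotzmann regularity, the one underlying the sources the paper cites for its surrounding machinery (\cite{AC}, \cite{KR}). The chain itself is sound: Theorem~\ref{persistence number}(a) identifies the persistence index of $S_\X$ with the top generator degree of the lex ideal $J$, part (b) identifies that index with $G(\X)$ because $I_\X$ is saturated, the Eliahou--Kervaire resolution (valid for stable monomial ideals in any characteristic, hence for lex ideals, saturated or not) gives $\reg(J)=G(\X)$, and the Bigatti--Hulett--Pardue inequality $\beta_{p,j}(I_\X)\le\beta_{p,j}(J)$ forces $\reg(I_\X)\le\reg(J)$, since regularity is read off from the positions of the nonzero Betti numbers. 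Note that this last tool is not foreign to the paper: it is exactly what is used in the proof of Lemma~\ref{L:501}, so your proof stays within the paper's toolkit. The one point your write-up should address explicitly is potential circularity: you lean on Theorem~\ref{persistence number}, which the paper quotes from \cite{AC} and \cite{KR}, and your derivation is legitimate only if those proofs of (a) and (b) do not themselves invoke the Regularity Theorem. They do not --- (a) is the elementary fact that a lex ideal has maximal Hilbert-function growth in degree $d$ precisely when it has no minimal generator in degree $d+1$, and (b) rests on Gotzmann's Persistence Theorem (Theorem~\ref{T:401}) together with the combinatorics of lex ideals --- but since you are reconstructing a statement the paper treats as a black box, this dependency check is the step missing from your proposal and should be recorded.
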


\begin{Remk} Although we obtain equality between the Hilbert function and the Hilbert polynomial for all $d \geq \reg(I) - 1$, this does not force the Gotzmann number to be $\leq \reg(I) - 1$.  Indeed, for the rational normal curve in $\p^3$ we see that its defining ideal has regularity 1 but the Gotzmann number of the rational normal curve in $\p^3$ is 4.
\end{Remk}

\vskip .5cm
We also need to recall how the Gotzmann number and $i^{th}$ Gotzmann coefficients change when we pass from a variety $\X \subset \p^n$ to its general hyperplane section in $\p^{n-1}$.

\begin{Remk} Let $\X$ be a closed subscheme of $\p^n$ defined by the ideal $I = I_\X$ and let $\mathcal H$ be a general hyperplane of $\p^n$ defined by the general linear form $L$.  Since $I_\X$ is a saturated ideal, multiplication by $\overline{L}$ is an injective linear transformation on the homogeneous pieces of the coordinate ring of $\X$.  It follows that, for large degrees $d$,
$$
P_{\X \cap {\mathcal H} }(d) = P_\X(d) - P_\X(d-1):=\Delta P_\X(d).
$$
On the other hand, for large degrees the growth of the Hilbert function is the maximum permitted by Macaulay's theorem.  Thus by Theorem \ref{T:202} (b), (and noting that $\delta > 1$ for $d$ large enough - see Remark~\ref{delta>1}) we obtain that
\begin{equation}\label{EQ:202}
P_{\X\cap{\mathcal H} }(d) = P_{\X}(d)_{\mac{d}} .
\end{equation}
It follows that
\begin{equation}\label{hyp section}
P_{\X\cap{\mathcal H}}(d) = \Delta P_\X(d) = P_\X(d)_{\mac{d}}, \hbox{ for all $d \gg 0.$ }
\end{equation}
\end{Remk}

We obtain the following easy consequences of these observations (see e.g., \cite{AC}).

\begin{Thm}\label{T:204} Let $\X \subset \p^n$ be a closed subscheme of dimension $r > 0$ and $\mathcal H$ a general hyperplane of $\X$ then
\begin{enumerate}
  \item [(a)]
$$
C_i(\X) = C_{i-1}(\X \cap {\mathcal H})
$$
as long as $i-1 \geq 0$;
\item[(b)]
$$
G(\X) - G(\X \cap {\mathcal H}) = C_0(\X) .
$$
\end{enumerate}
\end{Thm}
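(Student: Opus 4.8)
The plan is to carry out the whole computation in a single sufficiently large degree $d$ and read off both Gotzmann difference sets from the same binomial expansion. First I would fix $d \gg 0$ large enough that three things hold simultaneously: (i) $H_\X(d) = P_\X(d)$ and the $d$-th Macaulay difference set $M_d(H_\X(d))$ has already stabilized to the Gotzmann difference set of $\X$ (possible by Remark~\ref{nochange} and maximal-growth persistence); (ii) the analogous stabilization holds for $\X \cap \mathcal{H}$; and (iii) equation~\eqref{hyp section} applies, so that $H_{\X \cap \mathcal{H}}(d) = H_\X(d)_{\mac{d}}$. Writing the $d$-binomial expansion $c := H_\X(d) = \binom{k_d}{d} + \cdots + \binom{k_\delta}{\delta}$, its difference set $(a_d,\ldots,a_\delta)$ with $a_j = k_j - j$ is by construction the Gotzmann difference set of $\X$, and $a_d = \dim \X = r \geq 1$ since $r > 0$.

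The heart of the argument is to compute the $d$-binomial expansion of the Green value $c_{\mac{d}} = \sum_{j=\delta}^{d}\binom{k_j-1}{j}$ term by term. A summand $\binom{k_j-1}{j}$ vanishes exactly when $k_j = j$, i.e.\ when $a_j = 0$, and otherwise (when $a_j \geq 1$) it is a genuine binomial contributing difference $(k_j-1)-j = a_j-1 \geq 0$. Since the difference set is weakly decreasing, all zero entries sit at the tail, say $a_j = 0$ for $\delta \leq j < m$ and $a_j \geq 1$ for $m \leq j \leq d$; hence the surviving summands $\binom{k_d-1}{d}+\cdots+\binom{k_m-1}{m}$ have consecutive lower indices $d, d-1,\ldots,m$, strictly decreasing upper indices, and last upper index $k_m - 1 \geq m$. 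This is therefore already the valid $d$-binomial expansion of $c_{\mac{d}}$, and reading off its difference set gives $M_d(c_{\mac{d}}) = (a_d-1, a_{d-1}-1, \ldots, a_m-1)$, which by the choice of $d$ is the Gotzmann difference set of $\X \cap \mathcal{H}$.

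With both difference sets in hand, the two assertions become pure bookkeeping. For (a), for each $i \geq 1$ the entries of $\X \cap \mathcal{H}$ equal to $i-1$ are precisely the shifts of the entries of $\X$ equal to $i$ (all of which satisfy $a_j \geq 1$, hence survive the Green operation), giving $C_{i-1}(\X\cap\mathcal{H}) = C_i(\X)$. For (b), the total number of entries decreases by exactly the number of discarded zero entries, namely $C_0(\X)$, so $G(\X) - G(\X\cap\mathcal{H}) = C_0(\X)$.

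I expect the one genuinely delicate point to be the verification in the middle paragraph that deleting the vanishing summands and subtracting $1$ from each remaining upper index really yields a legitimate $d$-binomial expansion (consecutive lower indices, strictly decreasing upper indices, and last upper index at least its lower index). This hinges entirely on the monotonicity of the Macaulay difference set, which forces the zeros to the tail so that they cannot interrupt the block of surviving terms, together with $r = a_d \geq 1$, which guarantees $c_{\mac{d}} \neq 0$ and hence a non-empty expansion. Everything else follows formally from equation~\eqref{hyp section} and Remark~\ref{nochange}.
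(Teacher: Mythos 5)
Your proposal is correct and follows essentially the same route as the paper: the paper presents Theorem~\ref{T:204} as an immediate consequence of equation~\eqref{hyp section}, and your argument is precisely the detailed verification of that consequence, computing the $d$-binomial expansion of $H_\X(d)_{\mac d}$ for $d \gg 0$ and comparing Gotzmann difference sets. The one delicate point you flag --- that the surviving terms form a legitimate $d$-binomial expansion because the zeros of the weakly decreasing difference set sit at the tail and $a_d = r \geq 1$ --- is exactly the bookkeeping the paper leaves implicit, and you handle it correctly.
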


\begin{Remk} \label{R:205}
Of course one can continue this line of argument for successive hyperplane sections. One no longer necessarily has that the ideal under consideration is saturated, but that is not really important since our interest is only in the multiplication map by a general linear form {\it in high degrees}. That multiplication, in high degrees, is injective as long as the ideal we are considering does not have radical equal to the irrelevant ideal of $R$. That was really the only thing we used in the discussion above. So, let $\Lambda_i$ be a general linear variety of dimension $n-i$.  We define
$$
G_i(\X) := G(\X\cap \Lambda_i) .
$$
It follows from our remarks above, that
$$
C_i(\X) = C_0(\X\cap \Lambda_i)
$$
and, if we set $G_{r+1}(\X) = 0$ (recall $r = \dim \X$) we obtain, for $i=0,1,...,r$,
$$
C_i(\X)=G_i(\X)-G_{i+1}(\X).
$$

It follows that
\begin{equation}\label{sht}
\deg (\X)  \le G_r(\X) \le G_{r-1}(\X) \le \cdots \le G_0(\X)= G(\X).
\end{equation}

Moreover, by equation~\eqref{degree} and the remark above, we have
$$
\deg(\X)=G_r(\X)=C_r(\X).
$$
\end{Remk}

In the case of varieties of low dimension we can reinterpret some of these results in terms of things already known and defined.  For example, it is easy to prove the following result.

\begin{Thm}[Theorem 4.7 \cite{AC}]\label{T:205}
Let $\X$ be a closed subscheme in $\p^n$ of dimension $r$
and let $p_a(\X)$ be the arithmetic genus of $\X$. If we let $G_i$ denote
$G_i(\X)$  for $0\le i\le r+1$. Then
\begin{enumerate}
\item[(a)] If $\X$ is a zero dimensional scheme in $\p^n$,
$G(\X)=C_0(\X)=\deg (\X)$.
\item[(b)] If $\X$ is a projective curve in
$\p^n$ then,
\[C_0(\X)=\binom{\deg (\X)-1}{2}-p_a(\X).\]
\item[(c)] If $r=\dim (\X) \geq 2$ then
\[
C_0(\X)=(-1)^{r+1}\left[\binom{G_r-1}{r+1}-p_a(\X)\right]-\sum_{s=1}^{r-1}
(-1)^s\left[\binom{G_s-1}{s+1}-\binom{G_{s+1}-1}{s+1}\right] .\]
\end{enumerate}
\end{Thm}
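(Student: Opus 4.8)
The plan is to compute the constant term $P_\X(0)$ of the Hilbert polynomial directly from the Gotzmann difference set and then convert the answer into a statement about $C_0(\X)$ through the definition of the arithmetic genus, $p_a(\X) = (-1)^r\bigl(P_\X(0)-1\bigr)$, equivalently $P_\X(0)=1+(-1)^r p_a(\X)$.

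First I would record the closed form of the Hilbert polynomial furnished by the polynomial $L(z)$ built from the Gotzmann difference set in Section~\ref{Preliminaries}: if the Gotzmann difference set is listed in non-increasing order as $(a_1,\dots,a_{G})$ with $G=G(\X)$, the computation there gives
$$
P_\X(z) = \sum_{j=1}^{G}\binom{z + a_j - (j-1)}{a_j}.
$$
(As a check, the set $(1,1,1,0)$ of the rational normal curve in $\p^3$ returns $3z+1$, matching Example~\ref{cubics}.) Evaluating at $z=0$ and grouping the summands by their common value $i$ — recalling from Remark~\ref{R:205} that $G_i$ counts the difference-set entries that are $\ge i$, so the entries equal to $i$ occupy positions $G_{i+1}+1,\dots,G_i$ — yields
$$
P_\X(0) = \sum_{i=0}^{r} S_i, \qquad S_i = \sum_{m = i-G_i+1}^{\,i-G_{i+1}} \binom{m}{i},
$$
where each $\binom{m}{i}=b_i(m)$ is interpreted as the numerical polynomial introduced above.

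Next I would collapse each inner sum by the telescoping identity $\binom{z+1}{i+1}-\binom{z}{i+1}=\binom{z}{i}$, obtaining $S_i = \binom{(i+1)-G_{i+1}}{i+1}-\binom{(i+1)-G_i}{i+1}$. The clean fact to establish here is the single evaluation $\binom{(i+1)-t}{i+1} = (-1)^{i+1}\binom{t-1}{i+1}$, valid for every integer $t\ge 0$ (the delicate instance being $\binom{-1}{i+1}=(-1)^{i+1}$), which gives $S_i = (-1)^{i+1}\bigl[\binom{G_{i+1}-1}{i+1}-\binom{G_i-1}{i+1}\bigr]$. Summing over $i$, the $i=0$ term is exactly $G_0-G_1=C_0(\X)$; substituting $P_\X(0)=1+(-1)^r p_a(\X)$, peeling off the $i=r$ term (where $G_{r+1}=0$ produces $\binom{-1}{r+1}$), and reindexing the remaining sum then produce the formula in part (c). Parts (a) and (b) fall out of the same computation: for $r=0$ every $a_j=0$, so $P_\X$ is the constant $G(\X)=\deg\X=C_0(\X)$; for $r=1$ the sum $\sum_{s=1}^{r-1}$ is empty and $G_1=G_r=\deg\X$ by equation~\eqref{degree} and Remark~\ref{R:205}, reducing (c) to $C_0(\X)=\binom{\deg\X-1}{2}-p_a(\X)$.

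The main obstacle I anticipate is bookkeeping rather than conceptual: keeping the binomial conventions consistent for small and negative arguments (especially the recurring $\binom{-1}{i+1}$), and correctly translating the position ranges of the difference-set entries into the cumulative counts $G_i$ and $G_{i+1}$. Once the evaluation identity $\binom{(i+1)-t}{i+1}=(-1)^{i+1}\binom{t-1}{i+1}$ is in hand, the telescoping and final reindexing are routine, and the sign $(-1)^{r+1}$ in front of the leading bracket is forced by the genus convention together with the parity shift between $(-1)^r$ and $(-1)^{r+1}$.
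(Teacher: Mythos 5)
Your proof is correct. Note first that there is nothing in the paper to compare it against: Theorem~\ref{T:205} is quoted from Theorem~4.7 of \cite{AC} without proof (the paper only remarks that ``it is easy to prove''), so your argument has to stand on its own --- and it does, using only machinery the paper itself develops. The closed form $P_\X(z)=\sum_{j=1}^{G}\binom{z+a_j-(j-1)}{a_j}$ is exactly the polynomial $L(z)$ of Section~\ref{Preliminaries} after the re-indexing $j=d-i+1$; the claim that the entries equal to $i$ occupy positions $G_{i+1}+1,\dots,G_i$ follows by telescoping $C_i(\X)=G_i(\X)-G_{i+1}(\X)$ from Remark~\ref{R:205} (so $G_i$ is the number of entries $\geq i$); the evaluation identity $\binom{(i+1)-t}{i+1}=(-1)^{i+1}\binom{t-1}{i+1}$ is a genuine polynomial identity, since both sides equal $\frac{1}{(i+1)!}(1-t)(2-t)\cdots(i+1-t)$; and the final bookkeeping with $P_\X(0)=1+(-1)^{r}p_a(\X)$ works out: the $i=0$ term of $\sum_i S_i$ is $G_0-G_1=C_0(\X)$, the $i=r$ term is $1-(-1)^{r+1}\binom{G_r-1}{r+1}$ (via $\binom{-1}{r+1}=(-1)^{r+1}$), and the terms $1\le i\le r-1$ reproduce the alternating sum in (c) with opposite sign, so solving for $C_0(\X)$ gives exactly the stated formula. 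Parts (a) and (b) do fall out as the specializations you describe, with $G_1=G_r=\deg\X$ for a curve coming from Remark~\ref{R:205} and equation~\eqref{degree}.

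The one point worth making fully explicit in a polished write-up is the binomial convention: every binomial $\binom{m}{i}$ occurring in your computation with possibly negative top entry $m$ must be read as the numerical polynomial $b_i(m)$ of Section~\ref{Preliminaries}, \emph{not} with Green's convention $\binom{m}{i}=0$ for $m<i$. This is forced because these terms arise by evaluating $L(z)$ at $z=0$, and you flag it correctly; the instance $\binom{-1}{i+1}=(-1)^{i+1}$ that you single out is precisely what produces the leading sign $(-1)^{r+1}$ in (c).
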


We will finish this section of preliminaries by recalling a property that certain sets of points in $\p^n$ might enjoy.

\begin{Def}\label{UPP} Let $\X$ be a set of $s$ distinct points in $\p^n$.  We say that $\X$ has the {\it uniform position property} (abbreviated {\it UPP}) if for every integer $d \leq s$, every $d$ subset of $\X$ has the same Hilbert function.
\end{Def}

The importance of this notion comes from the fact that if $\X$ is a non-degenerate irreducible closed subvariety of dimension $d$ and degree $s$ in $\p^n(k)$ ($k$ algebraically closed of characteristic 0) then the points obtained from $\X$ by successively cutting $\X$ with $d$ general hyperplanes gives us a set of $s$ points in $\p^{n-d}$ with UPP (Eisenbud and Harris \cite{EH1}).  It is a long outstanding problem to characterize the Hilbert functions of set of points with UPP.  A complete characterization is only known for points in $\p^2$ (see Maggioni-Ragusa~\cite{MR}, Geramita-Migliore~\cite{GM-1}).

\section{Extremal Behavior in the Hyperplane Restriction
Theorem}\label{sec_3}

In this section we define a new numerical invariant, $M(\X)$ (derived from Green's Hyperplane Restriction Theorem (Theorem \ref{T:201} (b))) for any closed subscheme $\X$ of $\p^n$.  Lemma \ref{L:303} is the key to the main results of this section.   Using it we can prove Theorem~\ref{T:309}, a slight generalization of Theorem 4 in \cite{G1}, and Theorem \ref{T:311}.  The latter gives a necessary and sufficient condition for a scheme $\X$ to satisfy $G(\X)=\deg (\X)$.

Let $\X$ be a closed subscheme in $\p^n$ with homogeneous
saturated ideal $I_\X$ and set $S_\X:=R/I_\X$. Then, by Theorem~\ref{T:201},
\begin{equation}\label{meq}
H(S_\X/hS_\X,d) \le H(S_\X,d)_{\langle d\rangle },
\end{equation}
for a general linear form $h$ in $R$ and for all $d\geq 1$. By analogy with the notion of the Persistence Index, we define the numerical invariant $M(\X)$ to be the minimum degree
where the equality begins to persist in equation (\ref{meq}), that
is
$$
M(\X)=\min \{d \,|\, H(S_\X/hS_\X,t)=H(S_\X,t)_{\langle t\rangle }\, \textup{for
all}\, t\geq d \}.
$$

\vskip .5cm Since $I_\X$ is a saturated ideal, we have the following equality for any general linear form $h$:
\begin{equation*}
\Delta H_\X(d)=H(S_\X/hS_\X,d), \qquad \forall d\ge 1.
\end{equation*}
 Hence, by equation~\eqref{meq},
\begin{equation}\label{deq}
\begin{array}{clllllllllllll}
\Delta H_\X(d) =H(S_\X/hS_\X,d)\le H(S_\X,d)_{\langle d\rangle}=H_\X(d)_{\langle d\rangle }, \quad \text{i.e.,} \\[1ex]
\Delta H_\X(d) \le H_\X(d)_{\langle d\rangle }
\end{array}
 \end{equation}
 for such $d$, and thus, by equation~\eqref{deq}, we can rewrite $M(\X)$ as
\begin{equation}\label{EQ:303}
\begin{array}{lllllllllllllllll}
M(\X)
& = & \min \{d \,|\, H(S_\X/hS_\X,t)=H(S_\X,t)_{\langle t\rangle },\, \forall t\geq d \} \\[.5ex]
& = & \min \{d \,|\, \Delta H_\X(t)=H_\X(t)_{\langle t\rangle }, \, \forall t\geq d \}.
\end{array}
\end{equation}

Recall that by Theorem \ref{persistence number}
$$
G(\X)=\min\{d \mid H_\X(t+1)=H_\X(t)^{\langle t\rangle},\, \forall t\ge d\}.
$$
So, applying Theorem~\ref{T:202} (b), for every $t \ge G(\X)$, we get
$$
\begin{array}{cccllllllllllllll}
H(S_\X/hS_\X,t+1) & = & H(R/I_\X,t+1)_{\langle t+1\rangle} \\
\Vert &   & \Vert \\
\Delta
H_\X(t+1)& = & H_\X(t+1)_{\langle t+1\rangle },
\end{array}
$$

In other words, $M(\X)\le G(\X)+1$. We can, however, improve this inequality.

\begin{Prop} \label{P:301}
Let $\X$ be a closed subscheme in $\p^n$. Then
$$
M(\X)\leq G(\X).
$$
\end{Prop}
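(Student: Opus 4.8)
The plan is to use the reformulation~\eqref{EQ:303} of $M(\X)$ and reduce the whole statement to a single degree. Write $g=G(\X)$ (we may assume $g\ge 1$, the case $g=0$ being vacuous). The discussion immediately preceding the proposition already shows that the Green equality $\Delta H_\X(t)=H_\X(t)_{\mac{t}}$ holds for every $t\ge g+1$; hence, to obtain $M(\X)\le g$ it suffices to establish that same equality in the single degree $t=g$. My first move is to trade the Hilbert function for the Hilbert polynomial there: by Gotzmann's Regularity Theorem (Theorem~\ref{regularity theorem}) we have $\reg(I_\X)\le G(\X)=g$, so Theorem~\ref{heqp} gives $H_\X(d)=P_\X(d)$ for all $d\ge g-1$. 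In particular $\Delta H_\X(g)=P_\X(g)-P_\X(g-1)=\Delta P_\X(g)$ and $H_\X(g)_{\mac{g}}=P_\X(g)_{\mac{g}}$, so the goal becomes the purely numerical identity $\Delta P_\X(g)=P_\X(g)_{\mac{g}}$.

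The key structural observation is that in degree exactly $g$ the $g$-binomial expansion of $H_\X(g)$ has full length. Indeed, by Theorem~\ref{persistence number} the integer $g=G(\X)$ is simultaneously the persistence index of $S_\X$ and the number of entries of the Gotzmann difference set; since maximal growth holds for all $t\ge g$, Remark~\ref{nochange} identifies $M_g(H_\X(g))$ with the Gotzmann difference set, so it has exactly $g$ entries. A length-$g$ expansion in degree $g$ must run all the way down to $\delta=1$, so I may write
\[
H_\X(g)=\sum_{i=1}^{g}\binom{i+a_i}{i},\qquad a_g\ge a_{g-1}\ge\cdots\ge a_1\ge 0,
\]
where $(a_g,\dots,a_1)$ is the Gotzmann difference set. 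This is precisely where the argument is delicate: because $\delta=1$, Theorem~\ref{T:202} cannot be quoted in degree $g$ (and the correction term of Corollary~\ref{C:203} is lurking nearby), so the equality must be verified by hand rather than deduced from the maximal growth we have in that degree.

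The computation itself is short. From the expansion above, Green's function is $H_\X(g)_{\mac{g}}=\sum_{i=1}^{g}\binom{i+a_i-1}{i}$. On the other side, the analysis of how maximal growth determines the Hilbert polynomial (the paragraphs following Remark~\ref{delta>1}) gives $P_\X(z)=\sum_{i=1}^{g}\binom{z+a_i-(g-i)}{a_i}$, whence
\[
\Delta P_\X(g)=\sum_{i=1}^{g}\left[\binom{i+a_i}{a_i}-\binom{i-1+a_i}{a_i}\right]=\sum_{i=1}^{g}\binom{i+a_i-1}{a_i-1}=\sum_{i=1}^{g}\binom{i+a_i-1}{i},
\]
using Pascal's rule together with $\binom{i+a_i-1}{a_i-1}=\binom{i+a_i-1}{i}$ (with the usual convention that both sides vanish when $a_i=0$). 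The two expressions coincide, so $\Delta H_\X(g)=\Delta P_\X(g)=P_\X(g)_{\mac{g}}=H_\X(g)_{\mac{g}}$, which is the Green equality in degree $g$; combined with its persistence in degrees $\ge g+1$ noted above, this yields $M(\X)\le G(\X)$. The main obstacle is the middle step: recognizing that degree $g$ forces $\delta=1$, so that the clean Theorem~\ref{T:202} is unavailable and the identity hinges on the explicit Pascal cancellation above.
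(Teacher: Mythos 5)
Your proof is correct and takes essentially the same route as the paper's: reduce to the single degree $g=G(\X)$ (the discussion before the proposition already gives degrees $\ge g+1$), use Gotzmann's Regularity Theorem plus Theorem~\ref{heqp} to replace $H_\X$ by $P_\X$ in degrees $g-1$ and $g$, note the expansion in degree $g$ has full length $g$, and verify $\Delta P_\X(g)=P_\X(g)_{\mac g}$ by Pascal's identity. The only (cosmetic) difference is that the paper first splits off the $C_0(\X)$ trailing constant terms before applying Pascal, whereas you treat all terms uniformly via the convention $\binom{m}{-1}=0$.
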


\begin{proof} It suffices to show, by equation~\eqref{EQ:303}, that for $g=G(\X)$
$$
\Delta H_\X(g) = H_\X(g)_{\mac{g}}.
$$
\smallskip

Let $(a_s, a_{s-1}, \ldots , a_\delta)$ be the Gotzmann difference set of the Hilbert polynomial $P_\X$.  Then
$$
P_\X(d)=\binom{a_s+d}{d} + \binom {a_{s-1}+(d-1)}{d-1}+\cdots +
\binom{a_\delta+(d-s+1)}{d-s+1}
$$
Then, by Theorem \ref{persistence number} (see also Theorem 2.6 (ii) in \cite{AC}), we have $s- \delta +1=g$, and thus  $I_\X$ is $g$-regular by Gotzmann's Regularity Theorem \ref{regularity theorem}.

\smallskip

Note that $H_\X(d)=P_\X(d)$ for any $d\geq g-1$ (Theorem \ref{heqp}, or see also Lemma 2.5 (iii) in \cite{AC}). Hence we have
$$
\begin{array}{lllllllllllllllll}
\Delta H_\X(g) & = & \Delta P_\X(g), \quad  \text{and} \\[2ex]
H_\X(g)_{\langle g\rangle } & = & P_\X(g)_{\langle g\rangle }.
\end{array}
$$
Now, it is enough to show that
$$
\Delta P_\X(g)=P_\X(g)_{\langle g\rangle}.
$$

Recall Pascal's identity $\binom{a_i+i-1}{i-1}+\binom{a_i+i-1}i=\binom{a_i+i}i$.

Let $C_0(\X)=c_0\ge 0$ and consider
$$
P_\X(z) = \binom{a_s + z}{a_s} + \cdots + \binom{a_{c_0 + \delta} + z - (s-\delta-c_0)}{a_{c_0 + \delta}} + c_0 .
$$
Since $s-\delta+1=g$, we see that
\begin{align*}
P_\X(g) =&\binom{a_s + g}{a_s} + \cdots + \binom{a_{c_0 + \delta} + g - (s-\delta-c_0)}{a_{c_0 + \delta}} + c_0\\[4ex]
=&\binom{a_s + g}{a_s} + \cdots + \binom{a_{c_0 + \delta} + c_0 + 1}{a_{c_0 + \delta}} + c_0\text{ and }\\[4ex]
P_\X(g-1)=&\binom{a_s+g-1}{a_s}+\cdots +\binom{a_{c_0+\delta}+ g-1 - (s-\delta -c_0)}{a_{c_0 +\delta}}+c_0
\\[4ex]
=&\binom{a_s+g-1}{a_s}+\cdots +\binom{a_{c_0+\delta}+ c_0}{a_{c_0 +\delta}}+c_0\\
\end{align*}
where $a_{c_0+\delta}\ge  1$. Applying Pascal's identity again,
$$
\begin{array}{llllllllllllll}
\Delta P_\X(g) & = & \displaystyle \binom {a_s+g-1}{a_{s}-1} +\cdots +
\binom{a_{c_0+\delta}+c_0}{a_{c_0+\delta}-1}\\[4ex]
& = & P_\X(g)_{\langle g\rangle }.
\end{array}
$$
as we wanted.
\end{proof}

\medskip\medskip

\begin{Lem}\label{L:302} Let $\X$ be as above. Then
$$
\Delta H_\X(d)-(\Delta H_\X(d))_{\langle d\rangle } \le \Delta H_\X(d-1)\le
H_\X(d-1)_{\langle d-1\rangle }
$$
for every integer $d \geq 1$.
\end{Lem}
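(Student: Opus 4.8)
The plan is to reduce everything to Green's Hyperplane Restriction Theorem (Theorem~\ref{T:201}(b)), applied twice. Throughout, fix a general linear form $h$ and set $B = S_\X/hS_\X$. Since $I_\X$ is saturated, multiplication by $\overline{h}$ is injective in each degree, so $H(B,d) = \Delta H_\X(d)$ for all $d$, and consequently $(\Delta H_\X(d))_{\mac d} = H(B,d)_{\mac d}$. With this dictionary the asserted chain becomes
$$
H(B,d) - H(B,d)_{\mac d} \;\le\; H(B,d-1) \;\le\; H_\X(d-1)_{\mac{d-1}} ,
$$
and I would prove the two inequalities separately.

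The right-hand inequality $\Delta H_\X(d-1) \le H_\X(d-1)_{\mac{d-1}}$ is nothing more than Green's Theorem applied to $S_\X$ in degree $d-1$, i.e.\ exactly equation~\eqref{deq} read at $d-1$ rather than $d$. So this half is immediate and requires no new work (for $d\ge 1$ the relevant degree $d-1\ge 0$ causes no difficulty, the only boundary value being handled by the standard conventions on $H_\X(0)$).

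For the left-hand inequality the key idea is to apply Green's Theorem a second time, now to the standard graded algebra $B$ itself, viewing it as a quotient of $R/(h) \cong k[x_0,\dots,x_{n-1}]$. Choose a further general linear form $h'$; Green's Theorem gives $H(B/h'B,\,d) \le H(B,d)_{\mac d}$. On the other hand, multiplication by $h'$ carries $B_{d-1}$ into $B_d$, so the cokernel in degree $d$ satisfies the elementary dimension count $\dim_k(B_d/h'B_{d-1}) \ge \dim_k B_d - \dim_k B_{d-1}$, that is $H(B/h'B,\,d) \ge H(B,d) - H(B,d-1)$. Chaining these two facts yields
$$
H(B,d) - H(B,d-1) \;\le\; H(B/h'B,\,d) \;\le\; H(B,d)_{\mac d},
$$
and rearranging gives $H(B,d) - H(B,d)_{\mac d} \le H(B,d-1)$, which is precisely the left-hand inequality after translating back through $H(B,-) = \Delta H_\X(-)$.

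The main conceptual point — and the only thing that is not purely formal — is the decision to invoke Green's bound on the restricted algebra $B$ rather than attempting a direct manipulation of the $d$-binomial expansion of $\Delta H_\X(d)$ and its Green image. Once one recognizes that $\Delta H_\X$ is itself the Hilbert function of the standard graded algebra $B$, both inequalities are simply Green's Theorem applied in two different places, combined with the trivial surjectivity-based estimate on a multiplication map; I do not expect any genuine obstacle beyond stating the saturation reduction carefully.
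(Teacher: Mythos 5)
Your proof is correct and follows essentially the same route as the paper: the right inequality is Green's Theorem applied to $S_\X$ in degree $d-1$, and for the left inequality the paper likewise passes to the hyperplane-section algebra $R/(I_\X+(L_1))$ (your $B$), applies Green's Theorem there with a second general linear form $L_2$ (your $h'$), and bounds the cokernel of multiplication by $L_2$ — the paper merely packages your dimension count as an explicit four-term exact sequence whose kernel term $((K:L_2)/K)_{d-1}$ is discarded.
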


\begin{proof} Since the second inequality is always true (see Theorem~\ref{T:201} b)), it is enough to verify the first inequality.

Let $J = (L_1,L_2)$ be the ideal generated by any two general linear forms in $R=k[x_1,\dots,x_n]$ and let $K = I_\X + (L_1)$. Then, multiplication by $L_2$ gives the exact sequence
\begin{equation*}
0 \rightarrow  ( (K:L_2) /K )_{d-1} \rightarrow
(R/K)_{d-1} \stackrel{\times L_2}{\longrightarrow} (R/K)_{d}
\rightarrow (R/(I_\X + J))_{d} \rightarrow 0.
\end{equation*}
Since $R/K$ is the coordinate ring of the general hyperplane section of $\X$ and its Hilbert function is $\Delta H(\X,-)$, we have (by taking the alternating sum of the dimensions of the graded pieces of this exact sequence)
$$
\begin{array}{lllllllllllllllllllll}
\Delta H_\X(d)-\Delta H_\X(d-1)
& = & H(R/K,d) - H(R/K,d-1)\\[.5ex]
& = & H(R/(I_\X + J),d)-\dim_k  ((K:L_2)/K)_{d-1}\\[.5ex]
& \le  & H(R/(I_\X + J),d)\\[.5ex]
& = & H(R/(K+(L_2)),d)\\[.5ex]
& \le & H(R/K,d)_{\langle d\rangle} \\[.5ex]
& = & (\Delta H_\X(d)) _{\langle d\rangle},
\end{array}
$$
i.e.,
$$
\Delta H_\X(d)-(\Delta H_\X(d))_{\langle d\rangle } \le \Delta H_\X(d-1),
$$
which proves the first inequality and thus finishes the proof of the Lemma.
\end{proof}

\medskip\medskip

The following lemma is the pivotal result of this section.

\begin{Lem}\label{L:303}
Suppose that $\Delta H_\X(d)= H_\X(d)_{\mac d}$ and $C_0(\X,d)=0$.
Then,
$$
\Delta H_\X(d-1) = H_\X(d-1)_{\mac{d-1}} \hbox{ and } C_{0}(\X,d-1)=0.
$$
\end{Lem}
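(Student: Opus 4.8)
The plan is to work entirely with the $d$-binomial expansion of $c := H_\X(d)$, say
$$c = \binom{k_d}{d} + \binom{k_{d-1}}{d-1} + \cdots + \binom{k_\delta}{\delta}, \qquad k_d > \cdots > k_\delta \geq \delta > 0,$$
and to translate both hypotheses into statements about the integers $k_i$. The condition $C_0(\X,d) = 0$ says precisely that the smallest entry $k_\delta - \delta$ of the Macaulay difference set $M_d(c)$ is positive, i.e. $k_\delta > \delta$; this is exactly what guarantees that the expression for $c_{\mac d}$ obtained by subtracting $1$ from every ``top'' is again a legitimate $d$-binomial expansion (its bottom term $\binom{k_\delta - 1}{\delta}$ still has $k_\delta - 1 \geq \delta$). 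Using the hypothesis $\Delta H_\X(d) = H_\X(d)_{\mac d} = c_{\mac d}$, I would first record the $d$-binomial expansion of $\Delta H_\X(d)$ and then, via Pascal's identity, obtain closed binomial forms for the two quantities I really need:
$$H_\X(d-1) = c - c_{\mac d} = \sum_{i=\delta}^d \binom{k_i - 1}{i-1} \quad\text{and}\quad \Delta H_\X(d) - (\Delta H_\X(d))_{\mac d} = \sum_{i=\delta}^d \binom{k_i - 2}{i-1}.$$

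The crux is then to read off the $(d-1)$-binomial expansion of $H_\X(d-1)$ from the first formula, and here the behaviour splits according to whether $\delta > 1$ or $\delta = 1$. When $\delta > 1$ the terms $\binom{k_i-1}{i-1}$ (for $i = d, \dots, \delta$) already constitute the $(d-1)$-binomial expansion (the bottom index $\delta - 1$ is still positive and $k_\delta - 1 \geq \delta - 1$), and its Macaulay difference set is $((k_i - 1)-(i-1))_i = (k_i - i)_i = M_d(c)$. In other words $M_{d-1}(H_\X(d-1)) = M_d(H_\X(d))$, which instantly gives $C_0(\X, d-1) = C_0(\X,d) = 0$. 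Applying Green's function to this expansion yields $H_\X(d-1)_{\mac{d-1}} = \sum_{i=\delta}^d \binom{k_i-2}{i-1}$, which is exactly the left-hand quantity $\Delta H_\X(d) - (\Delta H_\X(d))_{\mac d}$ computed above. Feeding this into the sandwich of Lemma~\ref{L:302},
$$\Delta H_\X(d) - (\Delta H_\X(d))_{\mac d} \leq \Delta H_\X(d-1) \leq H_\X(d-1)_{\mac{d-1}},$$
forces equality throughout and delivers $\Delta H_\X(d-1) = H_\X(d-1)_{\mac{d-1}}$, the remaining conclusion.

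I expect the only real obstacle to be the degenerate case $\delta = 1$, which a priori threatens both conclusions: there the bottom term of $H_\X(d-1) = 1 + \sum_{i=2}^d \binom{k_i-1}{i-1}$ is a genuine constant $\binom{k_1-1}{0}=1$, so the induced $(d-1)$-binomial expansion acquires a trailing $\binom{1}{1}$ whose Macaulay difference entry is $0$, which would make $C_0(\X, d-1) = 1$. The plan is to rule this case out rather than survive it: a direct computation (again using $k_1 \geq 2$, so that $\binom{k_1-2}{0} = 1$) shows that when $\delta = 1$ one has $\Delta H_\X(d) - (\Delta H_\X(d))_{\mac d} = 1 + H_\X(d-1)_{\mac{d-1}}$, strictly larger than $H_\X(d-1)_{\mac{d-1}}$. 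This violates the chain of inequalities in Lemma~\ref{L:302}, so the hypotheses cannot hold when $\delta = 1$. Thus $\delta > 1$ is automatic, and the argument of the previous paragraph applies. The whole proof is therefore a careful bookkeeping of binomial coefficients, with Lemma~\ref{L:302} doing the essential work of both excluding $\delta = 1$ and closing the gap to equality at degree $d-1$.
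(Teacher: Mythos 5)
Your case $\delta>1$ is correct, and in fact slightly cleaner than the paper's: you obtain $H_\X(d-1)=c-c_{\mac d}=\sum_{i=\delta}^{d}\binom{k_i-1}{i-1}$ directly from the identity $\Delta H_\X(d)=H_\X(d)-H_\X(d-1)$ (valid since $I_\X$ is saturated), where the paper instead invokes Theorem~\ref{T:202} to get $H_\X(d)=H_\X(d-1)^{\mac{d-1}}$ and inverts the Macaulay operator; both routes land on the same expansion, and your sandwich argument via Lemma~\ref{L:302} is exactly the paper's closing step.

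The genuine gap is your treatment of $\delta=1$: you claim the hypotheses exclude it, but they do not, and the paper spends most of its proof handling precisely this case (which is also the case needed later, e.g.\ in Proposition~\ref{P:306}). Concretely, let $\X$ be a smooth conic in $\p^2$ and $d=2$: then $H_\X(2)=5=\binom{3}{2}+\binom{2}{1}$, so $\delta=1$ and $C_0(\X,2)=0$, while $\Delta H_\X(2)=2=\binom{2}{2}+\binom{1}{1}=H_\X(2)_{\mac 2}$, so both hypotheses hold. Your claimed contradiction rests on the formula $\Delta H_\X(d)-(\Delta H_\X(d))_{\mac d}=1+H_\X(d-1)_{\mac{d-1}}$, and this formula is false because you evaluated Green's function on an expression that is not the $(d-1)$-binomial expansion of $H_\X(d-1)$: the trailing constant $\binom{k_1-1}{0}=1$ cannot be kept as a separate term (a bottom index $0$, or two terms with bottom index $1$, are not permitted in a binomial expansion); it must be absorbed by Pascal's identity, and the absorption cascades through every term whose Macaulay difference entry equals $a_2$ (the paper's $\beta=\max\{\ell\mid a_\ell=a_2\}$ bookkeeping), producing the bottom term $\binom{(a_2+1)+(\beta-1)}{\beta-1}$. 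In the conic example $H_\X(1)=3=\binom{3}{1}$, so $H_\X(1)_{\mac 1}=2$, whereas your term-by-term computation gives $1$; and indeed $\Delta H_\X(2)-(\Delta H_\X(2))_{\mac 2}=2-0=2=H_\X(1)_{\mac 1}$, with no discrepancy. Once the collapse is done correctly, one finds both that the bottom entry of the true expansion has difference entry $a_2+1>0$ (so $C_0(\X,d-1)=0$ holds here too) and that $\Delta H_\X(d)-(\Delta H_\X(d))_{\mac d}=H_\X(d-1)_{\mac{d-1}}$ exactly, so your sandwich argument closes this case as well. In short: the $\delta=1$ case must be survived with the Pascal-cascade computation, not excluded, and as written your proof fails on examples as simple as a plane conic.
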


\begin{proof} Let $I_\X \subset S = k[x_0, \ldots , x_n]$ and write $S_\X = S/I_\X$.
We first write the $d^{th}$ binomial expansion of $H_\X(d)$:
$$
H_\X(d)=\binom {a_d+d}d+ \binom
{a_{d-1}+(d-1)}{d-1}+ \cdots + \binom {a_\delta +\delta}\delta
$$
with $a_d\geq a_{d-1} \geq \cdots \geq a_\delta \geq 1$ and $\delta \geq 1$. Note that $a_\delta \geq 1$ since we are assuming that $C_0(\X,d) = 0$.

\smallskip

First we will prove that $C_0(\X,d-1)=0$. We divide this argument into two cases: $\delta >1$ and $\delta = 1$.

Now assume $\delta >1$. With the notation as above and with the hypothesis that $\Delta H_\X(d)=H_\X(d)_{\langle d\rangle}$ (which we can rewrite as $H(S_\X/hS_\X,d)=H(S_\X,d)_{\langle d\rangle}$)   Theorem~\ref{T:202} gives us
$$
H_\X(d)=H_\X(d-1)^{\langle d-1\rangle },
$$
i.e.,
\begin{equation}\label{eq3-1}
H_\X(d-1)=\binom{a_{d}+(d-1)}{d-1}+ \cdots +
\binom{a_{\delta}+(\delta-1)}{\delta-1}.
\end{equation}
Moreover, since  $a_\delta \geq 1$, we have $C_0(\X,d-1)=0$ in equation (\ref{eq3-1}), as we wanted to prove.

\smallskip

Now assume $\delta=1$. Then, by Corollary \ref{C:203} we have
$$
\begin{array}{llllllllllllllll}
&   & H_\X(d-1)^{\mac {d-1}} \\[.5ex]
& = &  H_\X(d)+a_2-a_1+1\\[1ex]
& = & \displaystyle \binom {a_d+d}d+ \binom
{a_{d-1}+(d-1)}{d-1}+ \cdots + \binom{a_{2}+2}{2}+\binom {a_1 +1}{1} +a_2-a_1+1\\[2.5ex]
& = & \displaystyle \binom{a_{d}+d}{d}+ \binom
{a_{d-1}+(d-1)}{d-1}+  \cdots + \binom{a_{2}+2}{2}+\binom{a_2+2}{1}.
\end{array}
$$
Note that since we are assuming that $C_0(\X,d)=0$,we get that $a_2=\alpha\geq 1$.  Let $\beta=\max\{ \ell\, |\,a_\ell=a_2=\alpha\}.$ Then $\beta\geq 2$ and
$$
\begin{array}{llllllllllllllll}
  & H_\X(d-1)^{\mac{d-1}}\\[1ex]
= & \displaystyle \binom{a_d+d}d+\cdots
+\binom{a_{\beta+1}+\beta+1}{\beta+1}+\binom{\alpha+\beta}{\beta}+\cdots
+\binom{\alpha+2}{2}+\binom{\alpha+2}{1}
\end{array}
$$
which, by repeated use of the identities in Pascal's Triangle, gives
$$
\begin{array}{llllllllllllllll}
= & \displaystyle \binom{a_d+d}d+\cdots
+\binom{a_{\beta+1}+\beta+1}{\beta+1}+\binom{(\alpha+1)+\beta}{\beta}.
\end{array}
$$

This last equation is precisely the $d$-binomial expansion of $H_\X(d-1)^{\mac{d-1}}$, and so we deduce that
\begin{equation}\label{EQ:306-1}
H_\X(d-1)=\binom{a_{d}+d-1}{d-1}+ \cdots
+\binom{a_{\beta+1}+\beta}{\beta}+ \binom{\alpha+1+(\beta-1)}{(\beta-1)},
\end{equation}
and hence $C_0(\X,d-1)=0$.

\smallskip

That completes the proof that $C_0(\X,d-1)=0$ and so all that remains to prove is that
\begin{equation}\label{EQ:306A}
\Delta H_\X(d-1) = H_\X(d-1)_{\mac{d-1}} .
\end{equation}

We already have, from \ref{L:302} that
$$
\Delta H_\X(d-1) \leq H_\X(d-1)_{\mac{d-1}}
$$
so it remains only to show the reverse inequality.

Suppose we could show that
\begin{equation} \label{EQ:306}
H_\X(d-1)_{\langle d-1\rangle }=H_\X(d)_{\langle d\rangle }-(H_\X(d)_{\langle d\rangle })_{\langle d\rangle }
\end{equation}
 then
$$
\begin{array}{llllllllllllll}
H_\X(d-1)_{\langle d-1\rangle } &  = & H_\X(d)_{\langle d\rangle }-(H_\X(d)_{\langle d\rangle })_{\langle d\rangle }\\[.5ex]
& = & \Delta H_\X(d)-(\Delta H_\X(d))_{\langle d\rangle } & (\text{since } \Delta H_\X(d)=H_\X(d)_{\langle d\rangle }) \\[.5ex]
& \le & \Delta H_\X(d-1) & ( \text{by Lemma~\ref{L:302}}) .
\end{array}
$$
That would establish equation~\eqref{EQ:306A} and we would be done.

\smallskip

It remains to show that equation~\eqref{EQ:306} is true.

\smallskip

From equations~\eqref{eq3-1} (covering the case when $\delta > 1)$ and~\eqref{EQ:306-1} (covering the case when $\delta =1$), we have that
\begin{equation} \label{EQ:309}
\begin{array}{llllllllllllllllllll}
& H_\X(d-1)_{\mac {d-1}}\\[1ex]
= & \begin{cases} \displaystyle {\binom{a_{d}+d-2}{d-1}+ \cdots + \binom{a_{\beta+1}+\beta-1}{\beta}+\binom{\alpha+\beta-1}{\beta-1}}, & \text {if } \delta=1 \\[2ex]
\displaystyle {\binom{a_{d}+d-2}{d-1}+ \cdots +\binom{a_{\delta}+\delta
-2}{\delta-1}}, & \text {if } \delta >1.
\end{cases}
\end{array}
\end{equation}

Now, we compute
\begin{equation*}
H_\X(d)_{\mac d}-\left(H_\X(d)_{\mac d}\right)_{\mac d}.
\end{equation*}

Assume $\delta=1$ and let $a_2=\alpha \ge 1$ and $\beta=\max \{\ell |a_\ell =a_2=\alpha\}$ as above. Note that $\binom{a_1}{1}-\binom{a_1-1}{1}=1$ for $a_1\geq 1$.  Then we have

\begin{equation} \label{EQ:310}
\begin{array}{lllllllllllllllllll}
  & H_\X(d)_{\mac d}-\left(H_\X(d)_{\mac d}\right)_{\mac d}\\[2ex]
= & \displaystyle \left[\binom{a_d+d-1}d+\cdots +
     \binom{a_{\beta+1}+\beta}{\beta+1}+
\binom{\alpha+\beta-1}{\beta}+\cdots+\binom{\alpha+1}2+\binom{a_1}1\right] - \\[3ex]
  &   \displaystyle \left[\binom{a_d+d-2}d+\cdots +
     \binom{a_{\beta+1}+\beta-1}{\beta+1}+
\binom{\alpha+\beta-2}{\beta}+\cdots+\binom{\alpha}2+\binom{a_1-1}1\right] \\[3ex]
= & \displaystyle \binom{a_d+d-2}{d-1}+ \cdots+\binom {a_{\beta+1}+\beta-1}{\beta}+\binom{\alpha+\beta-2}{\beta-1}+\cdots +\binom {\alpha}{1} +1\\[3ex]
= & \displaystyle \binom{a_{d}+d-2}{d-1}+ \cdots
+\binom{a_{\beta+1}+\beta-1}{\beta}+\binom{\alpha+\beta-1}{\beta-1}.
\end{array}
\end{equation}
This last (by equation~\eqref{EQ:309} for $\delta = 1$) is $= H_\X(d-1)_{\mac {d-1}}$, and we are done in this case.

Now assume $\delta >1$. Then we have
\begin{equation} \label{EQ:311}
\begin{array}{llllllllllllllllll}
   & H_\X(d)_{\mac d}-\left(H_\X(d)_{\mac d}\right)_{\mac d}\\[1ex]
 = & \displaystyle\left[ \binom{a_d+d-1}d+\cdots +\binom{a_\delta +\delta -1} \delta  \right]
 -\left[\binom{a_d +d-2}d + \cdots + \binom{a_\delta +\delta -2} \delta \right]\\[2ex]
 = & \displaystyle \binom{a_d+d-2}{d-1}+\cdots+\binom{a_{\delta}+\delta-2}{\delta-1}.
\end{array}
\end{equation}
This last (by equation~\eqref{EQ:306} for $\delta > 1$) $= H_\X(d-1)_{\mac {d-1}}$.

This finishes the proof that equation~\eqref{EQ:306} is always true and finishes the proof of the lemma.
\end{proof}

\begin{Remk}\label{rem}
 The Lemmata above establish the following strange sounding result: suppose there exists a positive integer $d$ such that $\Delta H_\X(d)=H_\X(d)_{\langle d\rangle }$ {\bf and} $C_0(\X,d)=0$. Then we
have
$$
\Delta H_\X(\ell)=H_\X(\ell)_{\langle \ell\rangle }, \quad \text{and} \quad  C_0(\X,\ell)=0
\quad \text{for any} \quad  \ell \le d.
$$
I.e., for every $\ell \leq d$,  $H(\X,\ell)$ is determined by $H(\X,d)$ using Theorem~\ref{T:202} and Corollary~\ref{C:203}.  We will use this several times in the sequel.
\end{Remk}

\medskip\medskip

Our first use for Lemma~\ref{L:303} comes out of a reflection about Proposition~\ref{P:301}, which asserted that $M(\X)\leq G(\X)$. This inequality raises the following natural question:

\begin{ques}
What does   $M(\X)\neq G(\X)$ mean?  I.e., what is the significance of $M(\X)< G(\X)$?
\end{ques}

Using Lemma \ref{L:303}, we obtain the following surprising result.

\begin{Prop}\label{P:306} Let $\X$ be a closed subscheme of $\p^n$: if $M(\X) < G(\X)$ then $M(\X) = 1$.
\end{Prop}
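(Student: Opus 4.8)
The plan is to argue by contradiction. Write $M = M(\X)$ and $G = G(\X)$, assume $M < G$, and suppose toward a contradiction that $M \geq 2$; the goal is to force $M = 1$. By Proposition~\ref{P:301} we always have $M \leq G$, so the equality $\Delta H_\X(t) = H_\X(t)_{\mac t}$ persists for every $t \geq M$; the strict inequality $M < G$ then guarantees that this equality holds both at $t = G$ and at $t = G-1$. The whole strategy is to locate a single degree $\geq M$ at which the zeroth Gotzmann coefficient vanishes, and then to let Lemma~\ref{L:303} (packaged as Remark~\ref{rem}) propagate that vanishing, together with Green equality, all the way down to degree $1$ --- which forces $M = 1$ and contradicts $M \geq 2$.

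First I would pin down the shape of the $G$-binomial expansion of $H_\X(G)$. By Theorem~\ref{persistence number}(c), $G$ is the least degree from which maximal growth persists, so maximal growth \emph{fails} in degree $G-1$, i.e. $H_\X(G) \neq H_\X(G-1)^{\mac{G-1}}$. Since $I_\X$ is saturated I may apply Theorem~\ref{T:202} with $d = G-1$: statement (a), maximal growth at $G-1$, is false, while the Green equality appearing in (b) does hold because $G \geq M$. The only way to reconcile this with the equivalence in Theorem~\ref{T:202} is that the hypothesis ``$\delta > 1$'' fails, so the smallest index $\delta_G$ in the $G$-binomial expansion of $H_\X(G)$ equals $1$.

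The technical core is then the following transition computation. Writing $H_\X(G) = \sum_{i=1}^{G}\binom{k_i}{i}$ (indices $1,\dots,G$ since $\delta_G = 1$), I would use $\Delta H_\X(G) = H_\X(G)_{\mac G}$ and Pascal's identity to compute
$$
H_\X(G-1) = H_\X(G) - \Delta H_\X(G) = H_\X(G) - H_\X(G)_{\mac G} = \sum_{i=1}^{G}\binom{k_i - 1}{i-1},
$$
and then show that the bottom entry of its $(G-1)$-binomial expansion has a \emph{positive} $a$-value, i.e. $C_0(\X, G-1) = 0$. When $C_0(\X,G) = 0$ this is exactly the $\delta = 1$ case already carried out in Lemma~\ref{L:303} (equation~\eqref{EQ:306-1}), whose bottom $a$-value is $\geq 2$. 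When $C_0(\X,G) \geq 1$ the summand $i=1$ contributes the constant $\binom{k_1 - 1}{0} = 1$ sitting below index $1$; absorbing this constant triggers a cascade of Pascal collapses running up through the string of trailing zero $a$-values, and the expansion stabilizes at a bottom term whose $a$-value is again positive. I expect this cascade --- checking that the collapse terminates in a term of positive $a$-value for \emph{every} value of $C_0(\X,G)$ --- to be the main obstacle; it is Pascal's-triangle bookkeeping in the same spirit as the computations in Lemma~\ref{L:303}, but not literally covered by them, since there $C_0(\X,d)=0$ is assumed throughout.

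Once $C_0(\X, G-1) = 0$ is established the argument closes quickly. Green equality holds at $G-1$ because $G-1 \geq M$, so Remark~\ref{rem} applies at degree $G-1$ and yields $\Delta H_\X(\ell) = H_\X(\ell)_{\mac \ell}$ for all $\ell \leq G-1$, in particular at $\ell = 1$. Combining this with the persistence of Green equality for all $t \geq M$ and with $M \leq G-1$, equality holds in every degree $\geq 1$, whence $M = 1$ by definition of $M$, contradicting $M \geq 2$. Thus $M < G$ forces $M = 1$. (Note that when $C_0(\X,G)=0$ one can instead invoke Remark~\ref{rem} directly at degree $G$; the borderline case $M = G$ is untouched precisely because there Green equality need not hold at $G-1$, which is exactly where the hypothesis $M < G$ enters.)
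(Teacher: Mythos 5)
Your proposal is correct and follows essentially the same route as the paper's proof: use Theorem~\ref{T:202} to force $\delta=1$ in the expansion of $H_\X(G)$ (Green equality holds at $G$ while maximal growth fails at $G-1$), establish $C_0(\X,G-1)=0$ by Pascal bookkeeping, and then propagate with Lemma~\ref{L:303}/Remark~\ref{rem} down to degree $1$. The cascade you flag as the main obstacle is exactly what the paper carries out, and it does terminate correctly for every value of $C_0(\X,G)$: setting $\alpha=a_2\geq 0$ and $\beta=\max\{\ell \mid a_\ell=a_2\}$, Corollary~\ref{C:203} gives $H_\X(G-1)^{\mac{G-1}}=H_\X(G)+a_2-a_1+1$, the constant is absorbed as $\binom{a_1+1}{1}+\alpha-a_1+1=\binom{\alpha+2}{1}$, and the telescoping identity $\binom{\alpha+\beta}{\beta}+\cdots+\binom{\alpha+2}{2}+\binom{\alpha+2}{1}=\binom{(\alpha+1)+\beta}{\beta}$ shows the bottom term of the resulting expansion of $H_\X(G-1)$ is $\binom{(\alpha+1)+(\beta-1)}{\beta-1}$, whose $a$-value $\alpha+1\geq 1$ is positive whether or not $a_1$ (and even $a_2$) vanishes.
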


\begin{proof}
Suppose that  $M(\X)<G(\X)$ and write $G(\X)=t+1$.

\smallskip

Recall that
$$
\begin{array}{llllllllllllllllllllllll}
G(\X) & = & \min\{ d  \mid H_\X(n+1)=H_\X(n)^{\langle n \rangle}, \forall n\ge d\},
\quad \text{and} \\[1.5ex]
M(\X) & = & \min\{ d \mid \Delta H_\X(n)=H_\X(n)_{\langle n\rangle}, \forall n\ge d\}.
\end{array}
$$

In view of Theorem \ref{persistence number} (see also Lemma 2.5 in \cite{AC}) we obtain
$$
H_\X(t+1)  =  P_\X(t+1) .
$$
Since $M(\X) < G(\X)$ it follows that
$$
\Delta H_\X(t)  =  H_\X(t)_{\langle t\rangle }.
$$
If we knew that $C_0(\X,t)=0$ we could apply Lemma~\ref{L:303} and Remark~\ref{rem} to conclude that $M(\X) = 1$.  We now seek to show exactly that.

Now, it also follows from $M(\X) < G(\X)$ that
$$
\Delta H_\X(t+1)  =  H_\X(t+1)_{\langle t+1\rangle }.
$$

So, let the $(t+1)$-st binomial expansion of $H_\X(t+1)$ have the form:
\begin{equation} \label{EQ:312}
\begin{array}{lllllllllllllllll}
H_\X(t+1)
& = & P_\X(t+1) \\[.5ex]
& = & \displaystyle
\binom{a_{t+1}+t+1}{t+1}+ \cdots +\binom{a_{\delta}+\delta}{\delta}.
\end{array}
\end{equation}

If $\delta>1$, then, by Theorem 3.3 in \cite{AC}, we have
$$
H_\X(t+1)=H_\X(t)^{\langle t\rangle},
$$
i.e., $G(\X)\le t$, a contradiction.

Hence we must have  $\delta=1$, and so we can rewrite equation~\eqref{EQ:312} as
$$
\begin{array}{lllllllllllllllll}
H_\X(t+1)
& = & P_\X(t+1) \\[.5ex]
& = & \displaystyle
\binom{a_{t+1}+t+1}{t+1}+ \cdots +\binom{a_{1}+1}{1}.
\end{array}
$$

Let $a_2=\alpha \ge 0$ and $\beta=\max\{\ell \mid a_\ell =a_2=\alpha \}$.
Then, by Corollary \ref{C:203}
$$
\begin{array}{lllllllllllllllll}
H_\X(t)^{\mac t}
& = &  H_\X(t+1)+a_2-a_1+1\\[.5ex]
& = &  \displaystyle \binom{a_t+t+1}{t+1} + \cdots +\binom{a_{\beta + 1}+\beta + 1}{\beta + 1} \\ [2ex]
&   &  \displaystyle + \binom{\alpha + \beta}{\beta} + \cdots +\binom{\alpha+2}2+\binom{a_1+1}1+\alpha-a_1+1\\[2ex]
& = &  \displaystyle \binom{a_t+t+1}{t+1} + \cdots +\binom{a_{\beta + 1}+\beta + 1}{\beta + 1} \\ [2ex]
&   &  \displaystyle  \binom{\alpha + \beta}{\beta} + \cdots +\binom{\alpha+2}2+\binom{\alpha +2}1 \\[2ex]
& = &  \displaystyle \binom{a_t+t+1}{t+1}+ \cdots
       +\binom{a_{\beta+1}+\beta+1}{\beta+1}+\binom{(\alpha+1)+\beta}{\beta}.
\end{array}
$$
Hence,
\begin{equation} \label{EQ:313}
\begin{array}{lllllllllllllll}
  H_\X(t)
& = & \displaystyle \binom{a_{t+1}+t}t + \cdots + +\binom{a_{\beta+1}+\beta }{\beta }+\binom {(\alpha+1)+(\beta-1)}{(\beta-1)} . \\
\end{array}
\end{equation}
Since $\alpha + 1 > 0$ we get that $C_0(\X,t) = 0$ and hence
$M(\X)=1$, as we wished.
\end{proof}

\medskip\medskip  From Proposition~\ref{P:306} one sees that it is important to understand the situation when $M(\X) = 1$.  In case $\X$ is a reduced and equidimensional subscheme of $\p^n$  we will obtain a complete description for when $M(\X) = 1$ occurs (see Corollary~\ref{M(X)=1}).

What is required for that characterization is an understanding of when equality occurs between two other invariants of $\X$, namely $G(\X)$ and $\deg(\X)$.  By definition (see also equation \ref{degree}) we always have $G(\X) \geq \deg(\X)$, so a natural question is: when is $G(\X) = \deg(\X)$?

The following examples are instructive.

\begin{Ex}\label{Ex307}

$a)$ Let $\X$ be a finite set of points in $\p^n$ such that $|\X|=d>1$. It can be easily verified, using Lemma~\ref{L:303} and Proposition~\ref{P:306}, that the following  are equivalent:
\begin{enumerate}
\item[(a)] the points of $\X$ are colinear.
\item[(b)] $d = G(\X)$ and $ 1 = M(X)$.
\end{enumerate}

In fact, if the points of $\X$ are colinear and $|\X|=d$, then we have
$$
H_\X(t)=
\begin{cases}
\displaystyle \binom{t+1}{t} = t+1 , &   \text{for }  t=0,1,\dots,d-1, \\[1.5ex]
d, & \text{for } t\ge d,
\end{cases}
$$
i.e.,
$$
\begin{array}{lllllllllllllll}
G(\X) =  d
 \ne  1
 =  M(\X) \quad
\end{array}
$$
(in view of  Lemma~\ref{L:303}, Remark~\ref{rem} and Proposition~\ref{P:306}).

Conversely, assume $G(\X)\ne M(\X)$.  Then $M(\X)=1$ and since $|\X|=d$, we have that
$$
H_\X(t)=d, \quad \forall t\ge d-1.
$$
Moreover, since $H_\X(d-1)_{\langle d-1\rangle} =\binom{d}{d-1}_{\langle d-1\rangle}= 1$, we have that
$$
\begin{array}{lllllllllllllllll}
1
& = & H_\X(d-1)_{\langle d-1\rangle} \\
& = & \Delta H_\X(d-1) \qquad (\hbox{since } M(\X)=1)\\
& = & H_\X(d-1)-H_\X(d-2) \\
& = & d -H_\X(d-2),
\end{array}
$$
and so
$$
H_\X(d-2)=d-1.
$$
By continuing this process with Lemma~\ref{L:303} and Remark~\ref{rem}, we see that
$$
H_\X(t)=t+1, \quad \forall t\le d-1,
$$
which means that the points of  $\X$ are collinear, as we desired.

\smallskip

$b)$ Let $\X$ be a closed subscheme in $\p^n$ where $\dim \X=r$. Recall that
$$
G_0(\X):=G(\X) \quad \text{and} \quad G_i(\X):=G(\X\cap \Lambda_i)
$$
where $\Lambda_i$ is a general linear subspace of dimension $r-i$ for $i=1,\dots, r$.

For a curve $\X$ in $\p^n$, it is well known that $\X$ is a {\it plane} curve of degree $d$ if and only if the arithmetic genus of $\X$, $p_a(\X)$, is $p_a(\X)=\binom{d-1}{2}$. Indeed, note that, by Theorem \ref{T:205},
$$
C_0(\X)=\binom{d-1}{2}-p_a(\X).
$$

If we assume $C_0(\X)=0$ we conclude that $\X$ is a plane curve, and so
$$
G(\X)=C_0(\X)+C_1(\X)=C_1(\X)=\deg (\X) \quad (\hbox{because } C_0(\X)=0).
$$
It follows that
$$
\begin{array}{llllllllllll}
 C_0(\X)=0 & \Leftrightarrow
& G(\X)=\deg(\X) \\
&\Leftrightarrow
& \X \text{ is a plane curve.}
\end{array}
$$
\end{Ex}

\medskip\medskip

\begin{Remk}\label{degenerate}
Notice that $d$ points on a line of $\p^n$ ($n\geq 2$) and a plane curve in $\p^n$ ($n \geq 3$) are special degenerate varieties defined by an appropriate collection of linearly independent linear forms and a form of degree $d$, i.e., $I_\X $ is a very special kind of complete intersection variety, namely {\it a hypersurface in a proper linear subspace of $\p^n$}.  Our next goal is to show that the examples above are indicative.  Such varieties are characterized by the equality $G(\X) = \deg(\X)$.
\end{Remk}

\medskip\medskip
We begin our investigation of this equality between $G(\X)$ and $\deg(\X)$ by recalling a lovely result of Green in \cite{G1}.

\begin{Prop}[Theorem~3 \cite{G1}] \label{P:307}
Let $\X$ be a closed subscheme of $\p^n$.

$i)$ If
$$
H_\X(d)= \binom{r+d}{d}\quad {and}\quad \Delta H_\X(d) =
\binom{(r-1)+d}{d}
$$
for some $d\geq 1$ and $r\geq 1$, then
$$
(I_\X)_d=(I_{\Lambda})_d
$$
for some $r$-dimensional linear space $\Lambda$ in $\p^n$;

\smallskip

$ii)$  if there are integers $d \geq \ell \geq 0$ such that
$$
H_\X(d)=\, \binom{1+d}{d}+\cdots+\binom{1+(d-\ell+1)}{(d-\ell+1)}
$$
{and}
$$
 \ell = \Delta H_\X(d) = H_\X(d)_{\mac d}
$$
  then
$$
(I_\X)_d=(I_C)_d,
$$
where $C$ is a plane curve of degree $\ell$.
\end{Prop}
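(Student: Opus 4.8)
The plan is to read both hypotheses as the extremal (equality) case of Green's Hyperplane Restriction Theorem (Theorem~\ref{T:201}(b)) and to show that, for the two special shapes of binomial expansion that occur here, extremality is rigid enough to pin down $(I_\X)_d$ as the degree-$d$ part of the ideal of a linear space, respectively of a plane curve. The first step is to record that the displayed numerical conditions are precisely equality in Green's bound. In part $i)$ the $d$-binomial expansion of $\binom{r+d}{d}$ is the single summand $\binom{r+d}{d}$, so $H_\X(d)_{\mac d}=\binom{(r-1)+d}{d}$; in part $ii)$ the expansion is $\binom{d+1}{d}+\cdots+\binom{d-\ell+2}{d-\ell+1}$, whose Green value is $\ell$. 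Thus in both cases the hypothesis is exactly $\Delta H_\X(d)=H_\X(d)_{\mac d}$ for one very particular value of $H_\X(d)$.

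The second step is to pass to the reverse-lexicographic generic initial ideal $J=\Gin(I_\X)$. Since $k$ has characteristic zero, $J$ is strongly stable; it has the same Hilbert function as $I_\X$, and, because a general hyperplane section is modeled by setting the last variable to zero (an operation that commutes with reverse-lexicographic $\Gin$), the number $\Delta H_\X(d)$ and hence equality in Green's bound are also recorded by $J$. For a strongly stable ideal the three quantities $H_\X(d)$, $\Delta H_\X(d)$ and $H_\X(d)_{\mac d}$ become explicit monomial counts, and the technical core is to prove the following combinatorial rigidity: for these two extremal values, equality leaves no slack and forces $J_d$ to be the degree-$d$ component of the ideal $(x_0,\ldots,x_{n-r-1})$ of a coordinate $r$-plane in part $i)$, respectively of the ideal $(x_0,\ldots,x_{n-3},x_{n-2}^{\ell})$ in part $ii)$, this last being the generic initial ideal of a coordinate plane curve of degree $\ell$.

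The third and most delicate step is to lift this rigid description of $J_d$ to a statement about the actual forms in $(I_\X)_d$; matching Hilbert functions is automatic under $\Gin$ and so is not enough by itself. Here I would argue by descent on general hyperplane sections. Writing $\mathcal H=V(h)$ for a general hyperplane, the section $\X'=\X\cap\mathcal H\subset\p^{n-1}$ has $H_{\X'}(d)=\Delta H_\X(d)$ equal to $\binom{(r-1)+d}{d}$ in part $i)$ and to $\ell$ in part $ii)$, and the rigidity of the second step shows that $\X'$ again satisfies the extremal hypotheses in degree $d$. In part $i)$ an induction on $r$ (with base case $r=1$, a single line) gives $(I_{\X'})_d=(I_{\Lambda'})_d$ for an $(r-1)$-plane $\Lambda'$, while in part $ii)$ a single section drops to the zero-dimensional case, where the hypotheses force $\X'$ to be $\ell$ collinear points (the content of the criteria surrounding Example~\ref{Ex307}(a)). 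Finally, letting $\mathcal H$ vary, I would show that these sections cone off to a single $r$-plane $\Lambda$ in part $i)$ (respectively a single plane curve $C$ of degree $\ell$ in part $ii)$) contained in the base locus of the linear system $(I_\X)_d$; since $\dim_k(I_\X)_d=\dim_k(I_\Lambda)_d$ (respectively $=\dim_k(I_C)_d$) by the Hilbert-function hypothesis, the forced inclusion $(I_\X)_d\subseteq(I_\Lambda)_d$ upgrades to the desired equality.

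The main obstacle is the interplay of the rigidity established in the second step and the reassembly carried out in the third. One must show that equality in Green's bound leaves no combinatorial freedom, so that the generic initial ideal is forced to be exactly the extremal configuration above rather than merely to have the correct dimension, and then promote this statement about generic coordinates to one about the genuine, non-generic forms of $(I_\X)_d$, controlling how the base loci of the varying hyperplane sections glue into a single global linear space or plane curve. By contrast, identifying the hypotheses with equality in Green, the monomial bookkeeping, and the closing dimension count are all routine.
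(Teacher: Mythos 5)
A preliminary calibration: the paper contains no proof of Proposition~\ref{P:307} at all --- it is quoted as Theorem~3 of Green's paper \cite{G1} and then used, unproved, as the $r=1$ base case in the proof of Theorem~\ref{T:309}. So there is no ``paper proof'' to compare yours against, and your argument must stand on its own. At present it does not. Your first step (recognizing both hypotheses as equality in Green's Hyperplane Restriction Theorem, Theorem~\ref{T:201}(b)) is correct but routine, as you say. The two steps that carry all the weight, however, are announced rather than executed: in step two you write that ``the technical core is to prove'' the combinatorial rigidity forcing $\Gin(I_\X)_d=(x_0,\ldots,x_{n-r-1})_d$ (resp.\ $(x_0,\ldots,x_{n-3},x_{n-2}^{\ell})_d$), and in step three you write ``I would argue\ldots'' and ``I would show\ldots'' for the descent and for the gluing of base loci. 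Those two claims essentially \emph{are} Green's theorem, in monomial and in geometric form respectively; identifying them as what must be proved is a roadmap, not a proof.

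Moreover, the descent in step three has concrete defects even as a plan. The hypotheses and the conclusion of the Proposition live entirely in degree $d$, but your induction applies the Proposition to the hyperplane section $\X'=\X\cap\mathcal H$ \emph{as a scheme}: what the hypotheses control is $H(S_\X/hS_\X,d)=\Delta H_\X(d)$, whereas the inductive hypothesis needs $H_{\X'}(d)$, computed from the saturated ideal $I_{\X'}\supseteq (I_\X+(h))/(h)$; these can differ in degree $d$ (one only has $H_{\X'}(d)\le \Delta H_\X(d)$), and nothing in your sketch rules out strict inequality. Green's own formulation avoids exactly this trap by stating and proving the result for linear systems $W\subseteq H^0(\mathcal O_{\p^n}(d))$ and their restrictions, rather than for schemes and saturated ideals. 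Relatedly, in part $ii)$ you claim the section ``is $\ell$ collinear points,'' invoking Example~\ref{Ex307}(a); that example concerns the entire Hilbert function of a finite reduced set, while here only degree-$d$ information is available, so the most one could conclude is that the degree-$d$ piece of its ideal agrees with that of $\ell$ collinear points --- which is also all the Proposition asserts about $\X$ itself. Finally, the closing step --- that the base loci of the varying sections assemble into a single $r$-plane $\Lambda$ (resp.\ a single plane curve $C$) inside the base locus of $(I_\X)_d$, after which the dimension count finishes --- is the geometric crux and receives no argument. To repair the proposal you would need to actually prove the strongly-stable rigidity lemma and then supply a genuine lifting tool (for instance, that the common factor and the linear span of the base locus of $I_d$ are faithfully reflected by its reverse-lexicographic generic initial ideal), or else rerun the descent at the level of restricted linear systems, as Green does.
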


We now generalize Proposition~\ref{P:307} using Lemma~\ref{L:303}.

\begin{Thm}\label{T:309} Let $\X$ be a closed subscheme of $\p^n$.
If there are integers $d,\ \ell, \ r$, where $1 \leq \ell \leq d$ and $r \geq 1$ for which
$$
H_\X(d)=\binom{r+d}{d}+\cdots+\binom{r+(d-\ell +1)}{(d-\ell +1)}
$$
and
$$
\Delta H_\X(d) = H_\X(d)_{\mac d}
$$
then
$$
(I_{\X})_d=(I_{{F}_{\Lambda}})_d
$$
where $F_{\Lambda}$ is a hypersurface of degree $\ell $ in some
$(r+1)$-dimensional linear subspace $\Lambda$ of $\p^n$. In other words,
there exist linear forms $L_1,\ldots,L_{n-(r+1)}$ and a
homogeneous form $F$ of degree $\ell $ such that
$$
(I_{\X})_d = \big(L_1,\ldots,L_{n-(r+1)},F\big)_d.
$$
\end{Thm}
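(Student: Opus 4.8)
The plan is to show that the two hypotheses force $H_\X$ to coincide, in \emph{every} degree $\le d$, with the Hilbert function of a degree-$\ell$ hypersurface lying in an $(r+1)$-plane, and then to extract both the plane and the hypersurface from low-degree data by pure linear algebra.

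First I would record that the $d$-binomial expansion displayed in the hypothesis has all of its Macaulay differences equal to $r$; since $r\ge 1$ this says $C_0(\X,d)=0$. Combined with the assumed equality $\Delta H_\X(d)=H_\X(d)_{\mac d}$, this places us exactly in the setting of Lemma~\ref{L:303}, so by that Lemma and Remark~\ref{rem} the two conditions $\Delta H_\X(m)=H_\X(m)_{\mac m}$ and $C_0(\X,m)=0$ persist for every $1\le m\le d$, and each value $H_\X(m)$ is determined from $H_\X(d)$ through the binomial recursions of Theorem~\ref{T:202} and Corollary~\ref{C:203}. Carrying out this descent explicitly I expect to obtain, for all $1\le m\le d$,
$$H_\X(m)=\binom{m+r+1}{r+1}-\binom{m-\ell+r+1}{r+1},$$
with the convention that the second binomial is $0$ when $m<\ell$. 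This is precisely the Hilbert function of a hypersurface of degree $\ell$ in $\p^{r+1}$.

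Granting this, the geometry falls out. Assume first $\ell\ge 2$ (the case $\ell=1$ is immediate from Proposition~\ref{P:307}(i), a degree-$1$ hypersurface in an $(r+1)$-plane being an $r$-plane). Evaluating the formula at $m=1$ gives $H_\X(1)=r+2$, so $\dim_k(I_\X)_1=(n+1)-(r+2)=n-r-1$; let $L_1,\dots,L_{n-r-1}$ be a basis of $(I_\X)_1$. Their common zero locus is a linear subspace $\Lambda$ of dimension $r+1$ with $I_\Lambda=(L_1,\dots,L_{n-r-1})$, and since each $L_i$ lies in $I_\X$ we have $I_\Lambda\subseteq I_\X$, i.e. $\X\subseteq\Lambda$. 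Now I would pass to $\bar R:=R/I_\Lambda\cong k[y_0,\dots,y_{r+1}]$ and to $\bar I:=I_\X/I_\Lambda$, the ideal of $\X$ viewed inside $\Lambda\cong\p^{r+1}$. From the displayed Hilbert function, $\dim_k\bar I_m=\binom{m+r+1}{r+1}-H_\X(m)=\binom{m-\ell+r+1}{r+1}$, so $\bar I_m=0$ for $m<\ell$ while $\dim_k\bar I_\ell=1$; fix a nonzero $F\in\bar I_\ell$, a form of degree $\ell$.

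Finally, because $\bar R$ is a domain, multiplication by $F$ is injective, whence $\dim_k(F\cdot\bar R_{d-\ell})=\dim_k\bar R_{d-\ell}=\binom{(d-\ell)+(r+1)}{r+1}=\dim_k\bar I_d$. As $F\in\bar I$ we have the inclusion $F\cdot\bar R_{d-\ell}\subseteq\bar I_d$, and equality of these two finite-dimensional $k$-spaces follows from equality of their dimensions. Lifting $F$ to a degree-$\ell$ form in $R$ then yields $(I_\X)_d=(L_1,\dots,L_{n-r-1},F)_d$, which is the assertion. The one genuinely delicate point is the descent computation of the second paragraph: one must check that crossing degree $\ell$ (where the expansion has $\delta=1$, so Corollary~\ref{C:203} rather than Theorem~\ref{T:202} governs the step) correctly turns off the subtracted binomial and leaves a single binomial of difference $r+1$ for every $m<\ell$, giving in particular $H_\X(1)=r+2$; everything after that is linear algebra.
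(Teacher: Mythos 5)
Your proposal is correct, but it follows a genuinely different route from the paper's. The paper splits into cases by a different scheme: for $r=1$ it quotes Proposition~\ref{P:307}; for $r\ge 2$ and $d>\ell$ it applies Theorem~\ref{T:202} once to get maximal growth from degree $d-1$ to degree $d$, rewrites $H_\X(d-1)$ and $H_\X(d)$ in the form $\binom{(r+1)+m}{m}-\binom{(r+1)+(m-\ell)}{m-\ell}$ (the values of a complete intersection $(L_1,\dots,L_{n-(r+1)},F)$ in two consecutive degrees), and then invokes Corollary~3.2 of \cite{BGM} as a black box to conclude that $(I_\X)_d$ is the degree-$d$ piece of such an ideal; for $r\ge 2$ and $d=\ell$ it descends one step via Corollary~\ref{C:203}, applies Proposition~\ref{P:307} in degree $d-1$ to produce $\Lambda$, and finds $F$ by a dimension count. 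You instead split on $\ell$: for $\ell\ge 2$ you run the descent of Lemma~\ref{L:303} and Remark~\ref{rem} all the way down to degree $1$; your formula $H_\X(m)=\binom{m+r+1}{r+1}-\binom{m-\ell+r+1}{r+1}$ is correct, and the delicate crossing at degree $\ell$ (where $\delta=1$ forces the use of Corollary~\ref{C:203} rather than Theorem~\ref{T:202}) works exactly as you describe --- it is the same telescoping Pascal computation the paper itself carries out inside the proofs of Lemma~\ref{L:303} and Theorem~\ref{L:401}. You then extract $\Lambda$ directly from the $(n-r-1)$-dimensional space $(I_\X)_1$, extract $F$ from the one-dimensional space $\bar I_\ell$, and close with injectivity of multiplication by $F$ in the domain $R/I_\Lambda$ together with a dimension count in degree $d$. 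What your route buys: it is self-contained (it avoids \cite{BGM} entirely and needs Proposition~\ref{P:307} only for $\ell=1$), it treats $d>\ell$ and $d=\ell$ uniformly, and it actually proves the stronger statement $(I_\X)_m=(L_1,\dots,L_{n-(r+1)},F)_m$ for every $m\le d$. What the paper's route buys is brevity in the main case: the geometric content is outsourced to \cite{BGM}, so no low-degree analysis of the Hilbert function is required when $d>\ell$.
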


\begin{proof}
If $r = 1$, this is precisely Proposition~\ref{P:307}.

\smallskip

Now suppose $r \geq 2$. If $d>\ell$, then, by Theorem~\ref{T:202}, we have
$$
H_\X(d)\,=\,H_\X(d-1)^{\mac {d-1}}.
$$

Using the description of the $d$-binomial expansion of $H_\X(d)$ given in the statement of the theorem, we thus deduce that
\begin{align*}
H_\X(d-1)=&\binom{r+(d-1)}{(d-1)} + \cdots + \binom{r+(d-\ell)}{(d-\ell)}
\end{align*}
which we rewrite as
\begin{align*}
        =&\left[\binom{r+(d-1)}{(d-1)} + \cdots + \binom{r+(d-\ell)}{(d-\ell)}+\binom {r+(d-\ell)}{d-\ell -1}\right]-\binom {r+(d-\ell)}{d-\ell -1}\\
        =&\binom{(r+1)+(d-1)}{(d-1)}-\binom {(r+1)+(d-\ell-1)}{(d-\ell -1)}.
\end{align*}
In exactly the same way we find
$$
H_\X(d)=\binom{(r+1)+d}{d}-\binom {(r+1)+(d-\ell)}{(d-\ell)}.
$$

Notice that both $H_\X(d-1)$ and $H_X(d)$ are the values of the Hilbert function of an ideal of the form $I = (F, L_1, \ldots , L_{n-(r+1)})$, (where the forms $F, L_1, \ldots , L_{n-(r+1)}$ are a regular sequence) in degrees $d-1$ and $d$.  Thus, by Corollary 3.2 in \cite{BGM}, we obtain that $(I_\X)_d$ IS the degree $d$ component of the saturated ideal of a hypersurface of degree $\ell $ inside a linear subspace $\Lambda \cong
\p^{r+1}$ of $\p^n$.


\smallskip

Now assume  $d=\ell$, then by Corollary~\ref{C:203} (since $a_2 = a_1 = r$), we get
$$
\begin{array}{llllllllllllllll}
H_\X(d-1)^{\langle d-1\rangle}
& = & H_\X(d)+1 \\[1ex]
& = & \ds \binom{r+d}{d} + \cdots + \binom{r+2}{2}+ \binom{r+1}{1}+1 \\[2ex]
& = & \ds \binom{r+d}{d} + \cdots + \binom{r+2}{2}+ \binom{r+2}{1} \\[2ex]
& = & \ds \binom{(r+1)+d}d,
\end{array}
$$
and thus
\begin{equation}\label{EQ:313-1}
H_\X(d-1)=\binom{(r+1)+(d-1)}{(d-1)}.
\end{equation}
This implies that $C_0(\X,d-1) = 0$.

Moreover, one of our assumptions is that
$$
\Delta H_\X(d)=H_\X(d)_{\langle d\rangle },
$$
and so, by Lemma~\ref{L:303} and equation~\eqref{EQ:313-1}, we have
\begin{equation}\label{EQ:314}
\Delta H_\X(d-1)=H_\X(d-1)_{\langle d-1\rangle }=\binom{r+(d-1)}{(d-1)}.
\end{equation}
Hence, using Proposition~\ref{P:307} with equations~\eqref{EQ:313-1} and \eqref{EQ:314}, we obtain an $(r+1)$-dimensional linear subspace $\Lambda$ of $\p^n$ such that
\begin{equation}\label{EQ:315}
(I_{\X})_{d-1}=(I_{\Lambda})_{d-1}.
\end{equation}

Since the Hilbert function of $\Lambda$ has maximal growth in degree $d-1$, we have that
$$
\begin{array}{lllllllllllllll}
&   & H_\Lambda(d)-H_\X(d) \\[.5ex]
& = & H_\Lambda(d-1)^{\langle d-1\rangle}-H_\X(d) \\[.5ex]
& = & H_\X(d-1)^{\langle d-1\rangle}-H_\X(d) \quad (\hbox{in view of }
\text{equation~\eqref{EQ:315}})\\[.5ex]
& = & \ds \binom{(r+1)+d}{d} -H_\X(d) \hskip 4.1 true mm \quad (\hbox{by }
\text{equation~\eqref{EQ:313-1}}) \\[2ex]
& = & \ds \binom{(r+1)+d}{d}-\left[\binom{r+d}{d} + \cdots + \binom{r+2}{2}+ \binom{r+1}{1}\right] \\[2ex]
& = & 1,
\end{array}
$$
and hence $I_\X$ has one new generator in degree $d=\ell$. This means that there exists a hypersurface $F_{\Lambda}$ of degree $\ell $ in $\Lambda$ such that
$(I_{\X})_d=(I_{F_{\Lambda}})_d$, in the coordinate ring of $\Lambda$, as we wished.
\end{proof}

We now generalize the examples in Example~\ref{Ex307} as promised. Those examples are now seen to be a special case of the following Theorem.

\begin{Thm}\label{T:311}
Let $\X$ be a closed subscheme of $\p^n$ such that
$\dim(\X)=r$ and $\deg(\X)=d$. Then the following are equivalent:

\begin{itemize}
\item[(a)] $G(\X)=\deg (\X)=d;$

\item[(b)] $\X$ is a hypersurface $F_{\Lambda}$ of degree $d$ in some $(r\!+\!1)$-dimensional linear subspace $\Lambda$ of $\p^n$;

\item[(c)] For a general linear space $\Lambda_i$ of dimension $n-i$ in $\p^n$ and for $0\leq i \leq r-1$,
$$
p_a(\X\cap \Lambda_i) ={\binom{d-1}{r-i+1}}.
$$
\end{itemize}
\end{Thm}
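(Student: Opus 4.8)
The plan is to prove Theorem~\ref{T:311} by establishing the cycle of implications $(b) \Rightarrow (a) \Rightarrow (c) \Rightarrow (b)$, leaning heavily on Theorem~\ref{T:309} and the successive-hyperplane-section formalism developed in Remark~\ref{R:205} and Theorem~\ref{T:205}. First I would treat $(b) \Rightarrow (a)$, which should be the most direct: if $\X = F_\Lambda$ is a degree-$d$ hypersurface inside a linear $\Lambda \cong \p^{r+1}$, one computes $H_\X$ explicitly. The coordinate ring is $R/(L_1,\ldots,L_{n-(r+1)},F)$ with the $L_j$ and $F$ forming a regular sequence, so its Hilbert series is $(1-z^d)/(1-z)^{r+2}$. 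Reading off the Gotzmann difference set from this, one sees that $C_r(\X) = d = \deg(\X)$ and that there are \emph{no} entries smaller than $r$ (the regular-sequence structure forces $G(\X)=d$); hence $G(\X) = \deg(\X) = d$, giving $(a)$.

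For $(a) \Rightarrow (c)$, I would first translate $G(\X)=\deg(\X)$ into vanishing of Gotzmann coefficients. By equation~\eqref{degree} we always have $C_r(\X)=\deg(\X)=d$, and $G(\X)=\sum_{i=0}^r C_i(\X)$; so $G(\X)=\deg(\X)$ forces $C_i(\X)=0$ for all $0 \leq i \leq r-1$. Now, by Theorem~\ref{T:204}(a) the coefficient $C_i(\X)$ equals $C_{i-1}(\X\cap\Lambda_1)$, and iterating, $C_i(\X) = C_0(\X\cap\Lambda_{i})$ for the successive general hyperplane sections (as recorded in Remark~\ref{R:205}). The scheme $\X\cap\Lambda_i$ is a curve precisely when $i=r-1$, and for that case Theorem~\ref{T:205}(b) gives $C_0(\X\cap\Lambda_{r-1}) = \binom{\deg(\X\cap\Lambda_{r-1})-1}{2} - p_a(\X\cap\Lambda_{r-1})$. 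Since the degree is preserved under general hyperplane sections, $\deg(\X\cap\Lambda_{r-1})=d$, and the vanishing $C_{r-1}(\X)=C_0(\X\cap\Lambda_{r-1})=0$ yields $p_a(\X\cap\Lambda_{r-1})=\binom{d-1}{2}=\binom{d-1}{r-(r-1)+1}$. For the higher-codimension sections $0\le i < r-1$ one must instead feed the vanishing of the relevant $C_j$'s into the arithmetic-genus formula Theorem~\ref{T:205}(c), which expresses $C_0$ of a positive-dimensional scheme as an alternating sum of binomials $\binom{G_s-1}{s+1}$ in the $G_s$; under $G_s(\X\cap\Lambda_i)=\deg=d$ for all relevant $s$ this telescopes to the single binomial $\binom{d-1}{r-i+1}$, giving $(c)$.

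For the closing implication $(c) \Rightarrow (b)$, I would run the hyperplane-section machinery in reverse and invoke the structural Theorem~\ref{T:309}. The hypothesis on arithmetic genera at every level, read through Theorem~\ref{T:205}, is designed to force $C_i(\X)=0$ for $0\leq i \leq r-1$, hence $G(\X)=\deg(\X)=d$. Once we know $G(\X)=d$, the Gotzmann difference set of $\X$ is $(r,r,\ldots,r)$ with exactly $d$ entries, which pins down the Hilbert polynomial, and since $I_\X$ is $G(\X)=d$-regular by Gotzmann's Regularity Theorem~\ref{regularity theorem}, we get $H_\X(d)=P_\X(d)$ with maximal growth and $\Delta H_\X(d)=H_\X(d)_{\mac d}$. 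Writing out $P_\X(d)=H_\X(d)$ in $d$-binomial form, the all-$r$ difference set yields exactly the shape $H_\X(d)=\binom{r+d}{d}+\cdots+\binom{r+(d-\ell+1)}{(d-\ell+1)}$ with $\ell=d$ required as the hypothesis of Theorem~\ref{T:309}. Applying Theorem~\ref{T:309} then produces the linear forms $L_1,\ldots,L_{n-(r+1)}$ and a degree-$d$ form $F$ with $(I_\X)_d=(L_1,\ldots,L_{n-(r+1)},F)_d$; since $d=G(\X)\ge\reg(I_\X)$ the ideal is generated in degrees $\le d$ and saturated, so this determines $I_\X$ entirely, identifying $\X$ as the hypersurface $F_\Lambda$ in $\Lambda\cong\p^{r+1}$, which is $(b)$.

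The main obstacle I anticipate is the bookkeeping in $(a)\Leftrightarrow(c)$: Theorem~\ref{T:205}(c) is an alternating sum whose terms are controlled by the chain $G_r \le G_{r-1}\le\cdots\le G_0$ of equation~\eqref{sht}, and one must verify carefully that the equality $G(\X)=\deg(\X)$ forces \emph{all} the intermediate $G_i(\X)$ to equal $d$ (equivalently all $C_i(\X)=0$ for $i<r$), since otherwise the genus identities in $(c)$ would not hold simultaneously at every level. Establishing that the single numerical equality $G(\X)=\deg(\X)$ propagates to each hyperplane section, and that the resulting telescoping of the alternating-sum formula collapses precisely to $\binom{d-1}{r-i+1}$ for every $i$, is the delicate step; the rest is either a direct Hilbert-series computation or a citation of Theorem~\ref{T:309}.
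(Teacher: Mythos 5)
Your overall architecture coincides with the paper's: although you phrase it as a cycle $(b)\Rightarrow(a)\Rightarrow(c)\Rightarrow(b)$, your proof of $(c)\Rightarrow(b)$ first derives $(a)$ and then runs the $(a)\Rightarrow(b)$ argument, so in substance you are proving the same two equivalences $(a)\Leftrightarrow(b)$ and $(a)\Leftrightarrow(c)$ with the same tools the paper uses: Theorem~\ref{T:309} together with Gotzmann's Regularity Theorem~\ref{regularity theorem} for $(a)\Rightarrow(b)$, the explicit Hilbert polynomial of a hypersurface in a linear subspace for $(b)\Rightarrow(a)$, and Theorem~\ref{T:204}, Remark~\ref{R:205} and Theorem~\ref{T:205} for $(a)\Rightarrow(c)$. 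Those three implications are sound as you describe them; in particular, for $(a)\Rightarrow(c)$ the chain $\deg(\X)\le G_r(\X)\le\cdots\le G_0(\X)=G(\X)$ of equation~\eqref{sht} squeezes all $G_i(\X)$ to equal $d$, so all $C_i(\X)=0$ for $i<r$ and the alternating sum in Theorem~\ref{T:205}(c) collapses as you say.

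The gap is in $(c)\Rightarrow(b)$, specifically in the sentence asserting that the genus hypotheses, ``read through Theorem~\ref{T:205}, force $C_i(\X)=0$.'' This cannot be a one-shot application of Theorem~\ref{T:205}(c) to $\X$: that formula expresses $C_0(\X)$ in terms of $p_a(\X)$ \emph{and} the intermediate values $G_1(\X),\ldots,G_{r-1}(\X)$, which under hypothesis $(c)$ are not yet known to equal $d$ --- your telescoping presupposes $G_s=d$, which is available in the direction $(a)\Rightarrow(c)$ but is precisely what must be established in the direction $(c)\Rightarrow(a)$. The paper closes this by induction on $r=\dim(\X)$: a general hyperplane section $\Y$ of $\X$ satisfies hypothesis $(c)$ with $r-1$ in place of $r$, so by induction $G(\Y)=\deg(\Y)=d$; the chain for $\Y$ then forces $G_i(\Y)=d$ for all $i$, hence $G_{i+1}(\X)=G_i(\Y)=d$ by Theorem~\ref{T:204}(b) and Remark~\ref{R:205}; only now does Theorem~\ref{T:205}(c), with $p_a(\X)=\binom{d-1}{r+1}$, reduce to $C_0(\X)=0$, whence $G(\X)=G(\Y)+C_0(\X)=d$. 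Your curve-first observation (Theorem~\ref{T:205}(b) gives $C_{r-1}(\X)=0$ directly) is exactly the base case of such an induction, but the induction must be run explicitly, one dimension at a time. Note also that your ``main obstacle'' paragraph mislocates the delicacy: the propagation from $G(\X)=\deg(\X)$ to the intermediate $G_i$'s is immediate from equation~\eqref{sht}, whereas the converse passage from the genus identities to the vanishing of the coefficients is the step that genuinely requires the inductive argument.
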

\

\begin{proof} (a)$\Rightarrow$(b)
Note that if $G(\X)=\deg(\X)$, then
$$
\begin{array}{lllllllllllllll}
G(\X) & = & C_r(\X)+C_{r-1}(\X)+\cdots+C_1(\X)+C_0(\X) \\
     & = & \deg(\X)+C_{r-1}(\X)+\cdots+C_1(\X)+C_0(\X) \quad ( \text{using Corollary 4.4 of \cite{AC}})\\
& = & \deg (\X) \\
& = & d,
\end{array}
$$
i.e., we have that
$$
C_r(\X)=d \quad \text{and} \quad C_{r-1}(\X)=\cdots=C_1(\X)=C_0(\X)=0.
$$
Hence we see that
\begin{equation}\label{EQ:316}
P_\X(z)=\binom{r+z}{z}+\cdots+\binom{r+(z-d+1)}{(z-d+1)}.
\end{equation}
By Theorem~\ref{T:309}, $(I_{\X})_d=(I_{F_{\Lambda}})_d$
for a hypersurface $F_{\Lambda}$ of degree $d$ in some
$(r+1)$-dimensional linear subspace $\Lambda$ of $\p^n$. Since
$I_\X$ is $d$-regular by Gotzmann's regularity theorem, that is, $(I_\X)_t=(I_{F_{\Lambda}})_t$ for every $t\ge d$, $I_\X$ has to be an ideal of $F_{\Lambda}$.

\smallskip

(b)$\Rightarrow$(a)
This is immediate since the Hilbert polynomial of a hypersurface of degree $d$ in some $(r+1)$-dimensional linear space  of $\p^n$ is of the form:
$$
P_\X(z)=\binom{r+z}{z}+\cdots+\binom{r+(z-d+1)}{(z-d+1)}.
$$

(a)$\Rightarrow$(c)  Recall that for a general linear space $\Lambda_i$ of dimension $n-i$ in $\p^n$
$$
\begin{array}{lllllllllllllll}
G_i(\X)  :=  G(\X\cap\Lambda_i) \\
C_i(\X)  :=  C_0(\X\cap\Lambda_i)
\end{array}
$$
for every $i=1,\dots,r$. If we apply equations~\eqref{EQ:202} and \eqref{EQ:316}  inductively, we see that
$$
 P_{\X\cap\Lambda_i}(z)
= \ds \binom{(r-i)+z}{z}+\cdots+\binom{(r-i)+(z-d+1)}{(z-d+1)}
$$
for every $i=1,\dots,r$, and thus
$$
\begin{array}{lllllllllllllllllll}
C_0(\X)=C_1(\X)=\cdots=C_{r-1}(\X)=0, \quad \text{and} \\
G(\X)=G_1(\X)=\cdots=G_{r-1}(\X)=\deg(\X)=d.
\end{array}
$$
Hence, by Theorem~\ref{T:205}, it is obvious that
$$
p_a(\X\cap \Lambda_i)=\binom{d-1}{r-i+1}
$$
for $0\leq i \leq r-1$.

\smallskip

(c)$\Rightarrow$(a)  We will show this by induction on $\dim (\X)=r$.
If $r=1$, then $i=0$, and so
$$
p_a(\X)= \binom{d-1}{2}.
$$
Thus $\X$ is a plane curve, and hence $G(\X)=\deg(\X)=d$ (see Example~\ref{Ex307}~b)).

\smallskip

Now suppose $r>1$. Let $\Y$ be a general hyperplane
section of $\X$. Then, $\dim (\Y)=r-1$ and $\Y$ satisfies the given condition. Thus, by  induction on $r$,  we have that $G(\Y)=\deg (\Y)=d$. It follows from Theorem \ref{T:204} b) and Remark~\ref{R:205} that
$$
G_i(\Y)=G_{i+1}(\X)
$$
for $0\leq i\leq r-1$. Moreover, since $G_i(\Y)=G(\Y)=\deg(\Y)=d$  by induction on
$r$, we see that
$$
G_{i+1}(\X)=G_i(\Y)=d
$$
for $0\leq i\leq r-1$. Therefore, by Theorem~\ref{T:205} (c) with the condition
$p_a(\X)={\binom{d-1}{r+1}}$, we have
$$
C_0(\X)=0,
$$
i.e.,
$$
\begin{array}{llllllllllllll}
G(\X)
& = & G(\Y)+C_0(\X) \quad (\hbox{see }\text{Theorem~\ref{T:204}}~b))\\[.5ex]
& = & G(\Y)\\[.5ex]
& = & \deg (\X),
\end{array}
$$
as we desired.
\end{proof}

\section{Gotzmann Coefficients of Reduced Equidimensional  Schemes}\label{M_R}

In this section, we prove that if $\X$ is a reduced
equi-dimensional scheme (or integral scheme) of dimension $r$ in $\p^n$ which is not a
hypersurface in some proper linear subspace, then $M(\X)$ is equal to
$G(\X)$. (As a consequence we get a characterization of when $M(\X) = 1$.)  Using this result, we can also prove Theorem~\ref{T:404} and
Corollary~\ref{Gotz_partition of integral}, which provide a
necessary condition for a numerical polynomial to be the Hilbert
polynomial of a reduced equi-dimensional scheme $\X$ in $\p^n$. In fact, what we prove is that none of the Gotzmann coefficients of such a scheme vanish, i.e.,
$$
C_\ell(\X)\neq 0 \quad \text{for any $0\leq \ell \leq r$}.
$$

Before we state and prove the main result of this section (Theorem~\ref{T:404}), we recall the Gotzmann
Persistence Theorem and prove Theorem~\ref{L:401}.  They will both be used often in what follows. Notice that the proof of Theorem~\ref{L:401} requires that $k$ be an algebraically closed field.  The version of Bertini's Theorem that we are using allows us, according to \cite{Na}, to obtain the results for any characteristic.

\begin{Thm}[Gotzmann's Persistence Theorem \cite{G2}] \label{T:401}
Let $I$ be a homogeneous ideal of $R$ generated in degree $\leq d+1$ and set $A = R/I$. If
we have maximal growth for $H(A,-)$ in degree $d$, i.e,
$$
H(A,d+1) = H(A,d)^{\langle d\rangle},
$$
then
\begin{enumerate}
\item[(a)] $I$ is $d$-regular, and
\item[(b)] $H(R/I,\ell+1) = H(R/I,\ell)^{\langle \ell\rangle}$
for all $\ell \geq d$.
\end{enumerate}
\end{Thm}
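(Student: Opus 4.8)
The plan is to deduce part (a) from part (b), and to prove the persistence statement (b) by induction on the number of variables of $R$, using Green's Hyperplane Restriction Theorem (Theorem~\ref{T:201}(b)) together with the dictionary between maximal growth and equality in Green's bound provided by Theorem~\ref{T:202} and Corollary~\ref{C:203}. The structural fact that makes the induction go is that the hypothesis ``$I$ is generated in degrees $\le d+1$'' descends to a general hyperplane section: if $h\in R_1$ is general, $\bar R=R/(h)$, and $\bar I=(I+(h))/(h)$, then the images of the generators of $I$ generate $\bar I$, so $\bar I\subset\bar R$ is again generated in degrees $\le d+1$, while $\bar A=A/hA=\bar R/\bar I$ lives in one fewer variable. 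Thus the inductive hypothesis will be available for $\bar A$, provided I can first transport the maximal growth hypothesis from $A$ to $\bar A$.

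The first step is therefore to show that, for general $h$, maximal growth of $A$ in degree $d$ together with the generation hypothesis forces maximal growth of $\bar A$ in degree $d$. I would work from the exact sequence
\[
0\to(0:_{A}h)(-1)\to A(-1)\xrightarrow{\ \times h\ }A\to \bar A\to 0 ,
\]
which gives $H(A,e)-H(A,e-1)=H(\bar A,e)-\dim_k(0:_{A}h)_{e-1}$ in every degree $e$. The heart of the matter is that the generation hypothesis controls the kernel $(0:_{A}h)$ in the relevant degrees, so that $\times h$ is injective where it must be; granting this, Theorem~\ref{T:202} (whose implication (a)$\Rightarrow$(b) holds with no saturation assumption) converts the maximal growth of $A$ into equality in Green's bound for $\bar A$, which then translates, through the binomial bookkeeping of Corollary~\ref{C:203} and the reindexing $\delta\mapsto\delta+1$ recorded in Remark~\ref{delta>1}, into maximal growth of $\bar A$ in degree $d$.

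Now the inductive hypothesis applies: since $\bar I$ is generated in degrees $\le d+1$ and $\bar A$ has maximal growth in degree $d$, maximal growth persists for $\bar A$ in all degrees $\ge d$ and $\bar I$ is $d$-regular. To lift persistence back to $A$ I would reuse the displayed sequence. Once $(0:_{A}h)$ is shown to vanish in degrees $\ge d$ — this is where the generation hypothesis is spent a second time, to prevent ``late'' generators of $I$ from creating new kernel — the relation $H(A,e)=H(A,e-1)+H(\bar A,e)$ holds for $e> d$, and feeding the persistent maximal growth of $\bar A$ through the same dictionary yields $H(A,\ell+1)=H(A,\ell)^{\mac\ell}$ for every $\ell\ge d$, which is (b). The base case (one or two variables) is elementary, since there the Hilbert function and its maximal growth are transparent.

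For part (a), once (b) is known I would invoke Theorem~\ref{persistence number}(a): the lex-segment ideal $L$ with the same Hilbert function as $A$ then has all of its minimal generators in degrees $\le d$, so, being strongly stable, $\reg(L)=d$ by the Eliahou--Kervaire formula; since the lex ideal carries the largest regularity among ideals with a fixed Hilbert function, $\reg(I)\le\reg(L)=d$, i.e. $I$ is $d$-regular (one can alternatively track regularity directly through the hyperplane-section sequence). The step I expect to be the main obstacle is precisely the control of $(0:_{A}h)$ in degrees near $d$, both for the descent to $\bar A$ and the lift back to $A$: this is the only place the generation hypothesis is genuinely used, and it is entangled with the ``$\delta>1$'' proviso of Theorem~\ref{T:202}, whose exceptional case $\delta=1$ must be absorbed via Corollary~\ref{C:203} and Remark~\ref{delta>1}. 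Keeping the descent and the lift simultaneously compatible with this bookkeeping is the delicate core of the argument; the general-position choice of $h$ requires only that $k$ be infinite.
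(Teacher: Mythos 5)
The paper offers no proof of Theorem~\ref{T:401}: it is quoted verbatim from Green \cite{G2}, so your argument has to stand entirely on its own, and it does not close. The fatal gap is the ``lift'' step. Your descent step is actually sound, and needs less than you think: if $H(A,d+1)=H(A,d)^{\mac{d}}$, then Theorem~\ref{T:202} (a)$\Rightarrow$(b) (valid without the saturation hypothesis, as the paper notes) gives $H(A/hA,d+1)=H(A,d+1)_{\mac{d+1}}$, and combining this with Green's bound in degree $d$, Macaulay's bound for $A/hA$, and the identity $\bigl(c^{\mac{d}}\bigr)_{\mac{d+1}}=\bigl(c_{\mac{d}}\bigr)^{\mac{d}}$ forces $H(A/hA,d+1)=H(A/hA,d)^{\mac{d}}$; no control of $(0:_A h)$ is needed there. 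But to come back up you need $H(A,e)=H(A,e-1)+H(A/hA,e)$ for all $e>d$, i.e.\ $(0:_A h)_e=0$ for $e\ge d$, and at exactly this point your text says only ``granting this'' and that ``the generation hypothesis is spent a second time.'' No argument is given, and none of the tools you cite (Theorem~\ref{T:201}, Theorem~\ref{T:202}, Corollary~\ref{C:203}) can supply one: for general $h$ the kernel $(0:_A h)=(I:h)/I$ sits inside $H^0_{\mathfrak m}(A)=I^{\sat}/I$, so its vanishing in degrees $\ge d$ says that $I$ agrees with its saturation there --- it has nothing to do with ``late generators'' of $I$ (the colon measures failure of saturation, not the presence of generators). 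Since $I_e=I^{\sat}_e$ for $e\ge \sat(I)$ and $\sat(I)\le\reg(I)$, the vanishing you need is precisely the kind of information carried by conclusion (a). Your plan derives (a) from (b), while the proof of (b) needs (a)-type information: it is circular.

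This is not a repairable technicality; it is the entire content of Gotzmann's theorem. The hypotheses ``$I$ generated in degrees $\le d+1$'' plus maximal growth at $d$ must somehow be converted into $d$-regularity (equivalently, injectivity of $\times h$ on $A$ in degrees $\ge d$), and every known proof deploys substantial machinery exactly here: Green's proof in \cite{G2} passes to the generic initial ideal and uses the crystallization principle for Borel-fixed ideals, Gotzmann's original argument in \cite{Go} uses flatness, and the Kreuzer--Robbiano proof in \cite{KR} runs a delicate combinatorial analysis of lex segments. Nothing playing this role appears in your outline. For what it is worth, the peripheral parts of your plan are fine: the generation hypothesis does descend to $\bar I$, the base case is harmless, and the deduction of (a) from (b) via Theorem~\ref{persistence number}(a), the Eliahou--Kervaire formula for stable ideals, and the Bigatti--Hulett--Pardue comparison of Betti numbers is legitimate. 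But the core of the theorem is missing.
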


\begin{Thm}\label{L:401}
Let
$$
H_\X(d)=\binom{a_d+d}{d}+\cdots+\binom{a_{\delta}+\delta}{\delta}
$$
be the $d$-binomial expansion of $H_\X(d)$. Assume that $d>0$ is an integer for which
\begin{equation}\label{eq:402}
\Delta H_\X(d) =H_\X(d)_{\mac d}\quad\textup{and}\quad C_0(\X,d)=0, (\text{i.e., } a_\delta > 0).
\end{equation}
 Then,
\begin{enumerate}
\item[(a)] if  $G(\X,d) > C_{a_d}(\X,d)$ we have $(I_\X)_{a_d}=(I_\Lambda)_{a_d}$ for some $(a_d+1)$-dimensional linear subspace $\Lambda$ in $\p^n$ containing $X$.
\item[(b)] Furthermore, if $\delta >1$, there is a homogeneous polynomial $F$ of degree $C_{a_d}(\X,d)$ such that
$$
(I_{\X})_\ell \subset\,\left({(F)+I_{\Lambda}}\right)_\ell
$$
for every $\ell\leq d$. In other words, $\left(I_{\X}/I_{\Lambda}\right)_\ell$ has a common factor
$\overline{F}$ in $R/I_{\Lambda}$ for every $\ell\leq d$.  Moreover, $F$ is reduced if $\X$ is a reduced scheme.
\end{enumerate}
\end{Thm}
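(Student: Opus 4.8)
The plan is to prove both statements at once, exploiting the fact that the hypotheses in~\eqref{eq:402} propagate downward. By Lemma~\ref{L:303} and Remark~\ref{rem}, the equalities $\Delta H_\X(\ell)=H_\X(\ell)_{\mac\ell}$ and $C_0(\X,\ell)=0$ persist for every $\ell\le d$, so the entire segment $H_\X(0),\dots,H_\X(d)$ is reconstructed from $H_\X(d)$ via Theorem~\ref{T:202} and Corollary~\ref{C:203}. This is what lets me convert a statement about degree $d$ into a computation with binomial expansions in small degrees.

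For part (a) I would use the strict inequality $G(\X,d)>C_{a_d}(\X,d)$ to locate a degree in which $\X$ behaves exactly like a linear space. Writing $m=C_{a_d}(\X,d)$, the leading $m$ summands of the $d$-binomial expansion telescope, by Pascal's identity, to $\binom{(a_d+1)+d}{d}-\binom{(a_d+1)+(d-m)}{d-m}$, i.e.\ to the Hilbert function of a degree-$m$ hypersurface in $\p^{a_d+1}$; the presence of strictly smaller entries (guaranteed precisely by $G(\X,d)>C_{a_d}(\X,d)$) is what makes the reconstructed expansion collapse, as one descends with Lemma~\ref{L:303}, to a single binomial $\binom{(a_d+1)+e}{e}$ in a suitable degree $e\le d$. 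In that degree $H_\X(e)=\binom{(a_d+1)+e}{e}$ and $\Delta H_\X(e)=H_\X(e)_{\mac e}=\binom{a_d+e}{e}$, so Green's Proposition~\ref{P:307}~(i), with its ``$r$'' taken to be $a_d+1$, yields an $(a_d+1)$-dimensional linear space $\Lambda$ with $(I_\X)_e=(I_\Lambda)_e$; since $\Lambda$ is cut out in degree one and these identifications persist across the reconstructed segment, $\X\subseteq\Lambda$ and the asserted ideal equality follows.

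For part (b) I would pass to the coordinate ring $\overline R=R/I_\Lambda$ of $\Lambda$, a polynomial ring in $a_d+2$ variables, and study $\overline{I_\X}=I_\X/I_\Lambda$, whose Hilbert function is $\binom{(a_d+1)+\ell}{\ell}-H_\X(\ell)$. Here the assumption $\delta>1$ is decisive: by Theorem~\ref{T:202} it upgrades the Green-type equality into maximal growth $H_\X(\ell)=H_\X(\ell-1)^{\mac{\ell-1}}$ throughout the range in which $\delta$ stays $>1$, and Gotzmann's Persistence Theorem~\ref{T:401} then controls $\overline{I_\X}$ degree by degree up to $d$. Concretely, I would let $F$ be the greatest common divisor of the first non-vanishing graded piece of $\overline{I_\X}$; the telescoping computation of part (a) identifies $\deg F=C_{a_d}(\X,d)=m$, and, writing $\mathfrak a$ for the ideal generated by the cofactors, the inclusion $F\mathfrak a\subseteq\overline{I_\X}$ is automatic. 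The crux is the reverse inclusion in each degree $\le d$: one must show that no element of $\overline{I_\X}$ not divisible by $F$ can appear before degree $d+1$, which I would force by the Hilbert-function comparison $\dim(\overline{I_\X})_\ell=\dim\mathfrak a_{\ell-m}$ supplied by the maximal growth. This gives $(\overline{I_\X})_\ell\subseteq(\overline F)_\ell$, hence $(I_\X)_\ell\subseteq\big((F)+I_\Lambda\big)_\ell$ for $\ell\le d$.

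The reducedness of $F$ when $\X$ is reduced I would obtain by recognizing $F$ as the equation of the top-dimensional part of $\X$ inside $\Lambda$: a reduced scheme has a reduced, hence squarefree, top-dimensional component, so $F$ is reduced; the version of Bertini recorded before the statement is what keeps this argument, together with the underlying general-hyperplane reductions, valid in arbitrary characteristic. The main obstacle I anticipate is exactly the reverse inclusion in part (b): a set-theoretic containment $\X\supseteq V(F)$ only yields divisibility by $F_{\mathrm{red}}$, so to obtain divisibility by the full form $F$ throughout degrees $\le d$ I must use the scheme-theoretic rigidity coming from maximal growth and Theorem~\ref{T:401} rather than any Nullstellensatz argument; a secondary, purely bookkeeping, difficulty is pinning down the precise degree $e$ at which the expansion collapses in part (a).
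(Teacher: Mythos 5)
Your part (a) is essentially the paper's own argument: push the hypotheses of~\eqref{eq:402} down through Lemma~\ref{L:303} and Remark~\ref{rem}, observe that the strict inequality $G(\X,d)>C_{a_d}(\X,d)$ makes the reconstructed expansion collapse to the single binomial $\binom{(a_d+1)+e}{e}$, and then invoke Proposition~\ref{P:307}(i). The bookkeeping you defer is carried out explicitly in the paper: writing $d_0=C_{a_d}(\X,d)+1$, the expansion in degree $d_0$ is $\binom{a_d+d_0}{d_0}+\cdots+\binom{a_d+2}{2}+\binom{\gamma+1}{1}$ with $1\leq\gamma<a_d$, and the collapse happens exactly at $e=d_0-1=C_{a_d}(\X,d)$.

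Part (b), however, has a genuine gap. You define $F$ as the $\gcd$ of the first nonzero graded piece of $\overline{I_\X}$ and assert that ``the telescoping computation identifies $\deg F=C_{a_d}(\X,d)$,'' but no Hilbert-function computation can certify a $\gcd$: that first nonzero piece sits in degree $m+1$ (with $m=C_{a_d}(\X,d)$) and has dimension $a_d+1-\gamma\geq 2$, and a general system of forms with exactly these dimensions has trivial $\gcd$. The existence of a common factor of degree exactly $m$ is the geometric heart of the statement, and it is precisely what your sketch never establishes; saying that Gotzmann persistence ``controls $\overline{I_\X}$ degree by degree'' does not produce it. The paper extracts from persistence a specific structural consequence you are missing: since $\delta>1$ gives maximal growth $H_\X(d)=H_\X(d-1)^{\mac{d-1}}$ (Theorem~\ref{T:202}), Theorem~\ref{T:401} together with Lemma 1.4 of \cite{BGM} shows that the truncation $((I_\X)_{\leq d-1})$ is a \emph{saturated} ideal $I_\Y$ of a closed subscheme $\Y\subseteq\Lambda$ whose Hilbert polynomial is read off from persistence, so $\dim\Y=a_d$ and $\deg\Y=m$; since $\Y$ has codimension one in $\Lambda$, the ideal of its unmixed part is principal, $(F)+I_\Lambda$ with $\deg F=m$, and then $(I_\X)_\ell=(I_\Y)_\ell\subseteq((F)+I_\Lambda)_\ell$ for $\ell\leq d$ follows at once --- no reverse-inclusion count is needed. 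Your substitute for that step, the equality $\dim(\overline{I_\X})_\ell=\dim\mathfrak a_{\ell-m}$, is not ``supplied by maximal growth'': it would force $\overline{I_\X}$ to have no new generators in degrees $m+2,\dots,d$ beyond those coming from the cofactors of the first piece, which is neither asserted by the theorem nor true in general. Finally, your reducedness argument misreads what $F$ is: $V(F)$ is the unmixed part of the auxiliary scheme $\Y$, not the top-dimensional part of $\X$; in the intended applications (e.g.\ Theorem~\ref{T:504}, where $\X$ contains a finite set of points and $a_d$ can be large) one has $\dim\X<a_d$, so $\X$ has no component of dimension $a_d$ at all, and squarefreeness of $F$ cannot be deduced the way you propose.
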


\begin{proof} (a)  Recall that we are assuming that $G(\X,d)>C_{a_d}(\X,d)$. Then by
Theorem~\ref{T:202} and Corollary~\ref{C:203}, with notation as in the proof of
Lemma~\ref{L:303} (see equation~\eqref{EQ:306-1}), we obtain
\begin{equation}\label{eq8-1}
\begin{array}{lllllllllllllll}
&& H_\X(d-1) \\[1ex]
& = & \begin{cases}
      \ds\binom{a_{d}+(d-1)}{(d-1)}+ \cdots + \binom{a_{\beta+1}+\beta}{\beta}+\binom{(\alpha+1)+(\beta-1)}{(\beta-1)},&
\text{ if } \delta =1 \\[2ex]
         \ds \binom{a_{d}+(d-1)}{(d-1)}+ \cdots +\binom{a_{\delta}+(\delta-1)}{(\delta-1)}, & \text {if } \delta
           >1.
           \end{cases}
\end{array}
\end{equation}

As we have done before, using Theorem~\ref{T:202}, Corollary~\ref{C:203}, and
Lemma~\ref{L:303}, we can obtain $H_\X(t)$ for every $t\le d-2$ from $H_\X(d-1)$ inductively. In particular, the Hilbert function in degree $d_0=C_{a_d}(\X,d)+1 \leq d$ is of the form
\begin{equation}\label{equation1}
H_X(d_0)=\binom{a_d+d_0}{d_0}+\cdots+\binom{a_d+2}{2}+\binom{\gamma+1}{1}
\end{equation}
for some $1\leq \gamma < a_d$. Moreover, by Lemma~\ref{L:303} again, we have
$$
\Delta H_\X(d_0)=H_\X(d_0)_{\langle d_0\rangle},
$$
that is,
$$
\begin{array}{lllllllllllllll}
H_\X(d_0)-H_\X(d_0-1)
& = &  \Delta H_\X(d_0) \\[.5ex]
& = & H_\X(d_0)_{\langle d_0\rangle} \qquad (  \text{by assumption})\\[1ex]
& = & \ds \left[\binom{a_d+d_0}{d_0}+\cdots+\binom{a_d+2}{2}+\binom{\gamma+1}{1}\right]_{\langle d_0\rangle}\\[2ex]
& = & \ds \binom{a_d+(d_0-1)}{d_0}+\cdots+\binom{a_d+1}{2}+\binom{\gamma}{1},
\end{array}
$$
and thus
\begin{equation} \label{EQ:403}
\begin{array}{llllllllllllllllllllll}
&   & H_\X(d_0-1) \\[1ex]
& = & H_\X(d_0)-\ds \left[\binom{(d_0-1)+a_d}{d_0}+\cdots+\binom{1+a_d}{2}+\binom{\gamma}{1}\right] \\[2ex]
& = & \ds \left[\binom{d_0+a_d}{d_0}+\cdots+\binom{2+a_d}{2}+\binom{1+\gamma}{1}\right]\\[2ex]
&   & -\ds \left[\binom{(d_0-1)+a_d}{d_0}+\cdots+\binom{1+a_d}{2}+\binom{\gamma}{1}\right] \\[2ex]
& = & \ds \binom{(d_0-1)+a_d}{(d_0-1)}+\cdots+\binom{1+a_d}{1}+\binom{\gamma}{0}\\[2ex]
& = & \ds \binom{(d_0-1)+a_d}{(d_0-1)}+\cdots+\binom{2+a_d}{1} \\[2ex]
& = & \ds \binom{(a_d+1)+(d_0-1)}{(d_0-1)}.
\end{array}
\end{equation}

Note that $\Delta H_\X({d_0-1})=H_\X(d_0-1)_{\mac {d_0-1}}$ by Lemma~\ref{L:303}. It follows from equation~\eqref{EQ:403} and Proposition~\ref{P:307} that
$$
(I_\X)_{d_0-1}=(I_\Lambda)_{d_0-1} \ \Rightarrow \
(I_\X)_{a_d}=(I_\Lambda)_{a_d}
$$
for some $(a_d+1)$-dimensional linear subspace $\Lambda$ in $\p^n$ containing $\X$.

\smallskip

(b) Now suppose $\delta>1$. Then, by Theorem~\ref{T:202}
$$
H_\X(d)=H_\X(d-1)^{\langle  d-1 \rangle},
$$
which means that the Hilbert function of $\X$  has maximal growth in degrees $d-1$ and $d$. Moreover, by Theorem~\ref{T:401}  (see also Lemma 1.4 in \cite{BGM}), the ideal
$(({I_\X})_{\leq {d-1}})$ is saturated, and thus the ideal
$$
(({I_\X})_{\leq d-1})=I_\Y
$$
for a closed subscheme $\Y \subset \p^n$. Note that $(I_\X)_\ell=(I_\Y)_\ell$ for every
$\ell\le d-1$, and
\begin{equation}\label{EQ:404}
H_\Y(\ell+1)=H_\Y(\ell)^{\langle \ell\rangle}, \quad \forall \ell\ge d-1.
\end{equation}
In particular, since
$$
\begin{array}{llllllllllllllll}
H_\Y(d)
& = & H_\Y(d-1)^{\langle d-1\rangle} \\[.5ex]
& = & H_\X(d-1)^{\langle d-1\rangle} \quad (\text{since } H_\X(d-1)=H_\Y(d-1))\\[.5ex]
& = & H_\X(d) \hskip 1.71true cm ( \text{by equation~\eqref{eq8-1}})\\[.5ex]
& = & \ds \binom{a_d+d}{d}+\cdots+\binom{a_\delta+\delta}{\delta},
\end{array}
$$
we have that by equation~\eqref{EQ:404}
$$
H_\Y(\ell)=\ds \binom{a_d+\ell}{\ell}+\binom{a_{d-1}+(\ell-1)}{(\ell-1)}+\cdots+\binom{a_\delta+(\ell-d+\delta)}{(\ell-d+\delta)}
$$
for every $\ell\ge d$, and hence
$$
\dim(\Y)=a_d \quad \text{and} \quad \deg(\Y)=C_{a_d}(\X,d).
$$

Note that by (a),
$(I_\Lambda)_{\ell}\subseteq(I_\X)_\ell=(I_\Y)_\ell$ for every
$\ell\le d$. Hence we see that $\Y$ is contained in a linear
subspace $\Lambda$ of dimension $a_d+1$.

Now let $S:=k[x_0,x_1,\dots,x_n]/I_\Lambda=R/I_\Lambda\simeq k[x_0,x_1,\dots,
x_{a_d+1}]$ and
$$
\bar I_\X=I_\X/I_\Lambda \quad \text{and} \quad \bar I_\Y=I_\Y/I_\Lambda.
$$
Considering the unmixed part of $\Y$ in Proj$(S)$, its defining
ideal has to be  a principal ideal in $S$. Since $\codim(S/\bar
I_\Y)=1$ and $\deg(\Y)=C_{a_d}(\X,d)$ we see that there is a
polynomial $F\in R$ of degree $C_{a_d}(\X,d)$, such that
$$
I_\Y\subseteq I_\Lambda +(F).
$$
Furthermore, since $(I_\X)_\ell=(I_\Y)_\ell$ for every $\ell\le d$,
we have
$$
(I_\X)_\ell\subseteq (I_\Lambda +(F))_\ell
$$
for such $\ell$, as we wished.
\end{proof}

\begin{Remk}
Because of condition \ref{eq:402} in Theorem~\ref{L:401} we have that, for every $k\leq d$, the value of $H_\X(k)$   is
completely determined by $H_\X(d)$. If there exists an integer $k$, with
$C_{a_d}(\X,d)<k\leq d$, such that the $k$-binomial
expansion of $H_\X(k)$,
$$
H_\X(k)=\binom{b_k+k}{k}+\cdots+\binom{b_{\delta(k)}+\delta(k)}{\delta(k)}
$$
has $\delta(k)>1$ then, by Theorem~\ref{L:401}, we see that
$(I_\X)_{\leq k}$ is contained in an ideal generated by $n-a_d-1$
linear forms and a homogeneous polynomial of degree
$C_{a_d}(\X,d)$. Since we know how to obtain $H_\X(k)$ from
$H_\X(d)$ for all $k\leq d$, we can check the fact that
$\delta(d-i+1)>1$ if $a_i\leq a_{i-1}+1$ and $a_{i+1}\leq a_{i}+1$
for some $0\leq i \leq d-C_{a_d}(\X,d)+1$.
\end{Remk}

%

\begin{Coro}\label{coro2}
Let $\X$ be a reduced equi-dimensional subscheme of dimension $r$
in $\p^n$. Let $$
H_\X(d)=\binom{a_d+d}{d}+\cdots+\binom{a_{\delta}+\delta}{\delta}.
$$ be the $d$-binomial expansion of $H_\X(d)$ and suppose that $a_d=r$.
If $$ \Delta H_\X(d) =H_\X(d)_{\mac d}, \quad \hbox{and} \quad C_0(\X,d)=
0
$$
for some positive integer $d$, then $\X$ is a hypersurface in a
linear subspace $\Lambda\subset\p^n$.
\end{Coro}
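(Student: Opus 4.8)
The plan is to feed the two numerical hypotheses directly into Theorem~\ref{L:401} and Theorem~\ref{T:309} to manufacture a linear subspace $\Lambda\cong\p^{r+1}$ containing $\X$, and then to use reducedness together with equidimensionality to upgrade that containment into the assertion that $\X$ is a hypersurface in $\Lambda$.

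First I would note that the conditions $\Delta H_\X(d)=H_\X(d)_{\mac d}$ and $C_0(\X,d)=0$ are exactly the hypotheses of Theorem~\ref{L:401} (and of Remark~\ref{rem}), so all of that machinery is available. Since the $a_d$-th Gotzmann coefficient counts only a subset of the entries of the $d$-th Macaulay difference set, we always have $C_{a_d}(\X,d)\le G(\X,d)$, and I would split on this inequality. If $G(\X,d)>C_{a_d}(\X,d)$, then Theorem~\ref{L:401}(a) immediately produces an $(a_d+1)$-dimensional linear subspace containing $\X$; as $a_d=r$ this is a $\Lambda\cong\p^{r+1}$. If instead $G(\X,d)=C_{a_d}(\X,d)$, then every entry of the $d$-th Macaulay difference set equals $a_d=r$, so $H_\X(d)$ has precisely the shape $\binom{r+d}{d}+\cdots+\binom{r+(d-\ell+1)}{(d-\ell+1)}$ with $\ell=C_r(\X,d)\ge 1$; combined with $\Delta H_\X(d)=H_\X(d)_{\mac d}$ this is exactly the hypothesis of Theorem~\ref{T:309}, which yields $(I_\X)_d=(I_{F_\Lambda})_d$ for a degree-$\ell$ hypersurface $F_\Lambda$ inside some $\Lambda\cong\p^{r+1}$, and in particular $(I_\Lambda)_d\subseteq(I_\X)_d$. (Note $r\ge 1$ here, since $r=0$ would make $\X$ a finite point set, forcing $C_0(\X,d)\ne 0$.)

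Next I would promote the single-degree inclusion $(I_\Lambda)_d\subseteq(I_\X)_d$ to a scheme-theoretic containment. Writing $I_\Lambda=(L_1,\dots,L_{n-r-1})$ with the $L_i$ linear, the inclusion says that $L_i m$ vanishes on $\X$ for every monomial $m$ with $\deg m=d-1$. If some point $p\in\X$ had $L_i(p)\neq 0$, then $m(p)=0$ for every degree-$(d-1)$ monomial $m$, which is impossible; hence $\X\subseteq\Lambda$ set-theoretically, and since $\X$ is reduced this gives $I_\Lambda\subseteq I_\X$. In the case handled by Theorem~\ref{L:401}(a) this containment is asserted outright, so in both branches we arrive at $\X\subseteq\Lambda\cong\p^{r+1}$.

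Finally I would pass to $\bar R=R/I_\Lambda\cong k[y_0,\dots,y_{r+1}]$ and study $\bar I_\X=I_\X/I_\Lambda$. Because $\X$ is reduced, $\bar I_\X$ is radical; because $\X$ is equidimensional of dimension $r=\dim\Lambda-1$, every minimal prime of $\bar I_\X$ has height one. In the UFD $\bar R$ a height-one prime is principal, so $\bar I_\X$ is an intersection of distinct principal primes $(\bar f_i)$ with $\bar f_i$ pairwise coprime irreducibles; hence $\bar I_\X=(\bar F)$ for the squarefree form $\bar F=\prod\bar f_i$. Thus $\X$ is the hypersurface $V(F)$ inside $\Lambda$, as claimed. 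The main obstacle is precisely this last commutative-algebra step, for it is here that both hypotheses are indispensable: reducedness is what makes $\bar I_\X$ radical, and equidimensionality is what forces pure height one, whereas a radical ideal of mixed height in a UFD need not be principal, so without either assumption the conclusion fails.
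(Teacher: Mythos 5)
Your proof is correct, and its core strategy is the same as the paper's: produce a linear subspace $\Lambda\cong\p^{r+1}$ containing $\X$, then use reducedness and equidimensionality to write $I_\X/I_\Lambda$ as an intersection of height-one primes in the polynomial ring $R/I_\Lambda$, each of which is principal, so that $I_\X/I_\Lambda=(\overline{F_1\cdots F_\ell})$ and $\X$ is a hypersurface in $\Lambda$. Where you differ --- to your credit --- is in how $\Lambda$ is produced. The paper's proof simply says ``By Theorem~\ref{L:401}, $\X$ is contained in an $(r+1)$-dimensional linear subspace,'' but part (a) of that theorem carries the extra hypothesis $G(\X,d)>C_{a_d}(\X,d)$, which the corollary's hypotheses do not guarantee, and the paper never addresses the complementary case. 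You do: when $G(\X,d)=C_{a_d}(\X,d)$, every entry of the difference set equals $a_d=r$, so $H_\X(d)$ has exactly the shape required by Theorem~\ref{T:309}, which yields $(I_\Lambda)_d\subseteq(I_\X)_d$; you then promote this single-degree inclusion to the containment $\X\subseteq\Lambda$ using reducedness and the observation that the degree-$(d-1)$ monomials have no common zero in $\p^n$. (Your justification that $r\geq 1$ is also sound, though the cleanest phrasing is that $a_d=0$ would force every entry of the difference set to be $0$, contradicting $C_0(\X,d)=0$.) In short, your argument is a completed version of the paper's: same skeleton and same concluding commutative algebra, plus the case split and the degree-promotion step that the published proof leaves implicit.
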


\begin{proof}
By Theorem~\ref{L:401}, $\X$ is contained in an $(r+1)$-dimensional
linear subspace $\Lambda$ in $\p^n$. If $\X$ is a reduced
equi-dimensional subscheme, then
$$
I_\X/I_\Lambda=\bar \wp_1\cap \cdots \cap \bar \wp_\ell
$$
where $\bar \wp_i$ is a prime ideal in $R/I_\Lambda$ of height one for every $i=1,\dots,\ell$, and thus
$$
\bar\wp_i=(\bar F_i), \text{ for some } F_i \in R,    \text{ for all }  i=1,\dots,\ell,
$$
where $F_i$ is an irreducible polynomial in $R$ for such an $i$. In other words,
$$
I_\X/I_\Lambda=(\overline{F_1\cdots F_\ell}),
$$
which means that $\X$ is a hypersurface  in $\Lambda\subset \p^n$.
\end{proof}

\begin{Coro}\label{C:403} Let $\X$ be a reduced equi-dimensional closed subscheme in $\p^n$. Then, either $G(\X)=\deg(\X)$ or $G(\X)=M(\X)$.
\end{Coro}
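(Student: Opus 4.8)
The plan is to prove the dichotomy by eliminating the ``third possibility.'' Since Proposition~\ref{P:301} already gives $M(\X)\le G(\X)$ for every closed subscheme, the corollary is equivalent to the implication: if $M(\X)\neq G(\X)$ then $G(\X)=\deg(\X)$. So I would assume $M(\X)<G(\X)$ throughout and work toward a single degree in which all the hypotheses of Corollary~\ref{coro2} are satisfied. Once such a degree is produced, Corollary~\ref{coro2} exhibits $\X$ as a hypersurface in a linear subspace of $\p^n$, and Theorem~\ref{T:311} instantly upgrades this to $G(\X)=\deg(\X)$. The case $M(\X)=G(\X)$ needs no argument.

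First I would invoke Proposition~\ref{P:306}: the assumption $M(\X)<G(\X)$ forces $M(\X)=1$, so that $\Delta H_\X(d)=H_\X(d)_{\mac d}$ for every $d\ge 1$. Writing $g=G(\X)$, the inequality $M\ge 1$ gives $g\ge 2$, hence $d=g-1$ is a legitimate positive degree, and the identity above holds at $d=g-1$. To locate the remaining two inputs I would pin down the $(g-1)$-binomial expansion of $H_\X(g-1)$. Since $I_\X$ is $g$-regular by Gotzmann's Regularity Theorem~\ref{regularity theorem}, Theorem~\ref{heqp} yields $H_\X(g-1)=P_\X(g-1)$ and $H_\X(g)=P_\X(g)$; and by Theorem~\ref{persistence number} the $g$-binomial expansion of $H_\X(g)$ is the stabilized Gotzmann expansion, whose leading entry equals $\dim\X=r$.

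Next I would extract, at the degree $d=g-1$, exactly the three ingredients Corollary~\ref{coro2} requires: the leading entry $a_{g-1}=r$, the equality $\Delta H_\X(g-1)=H_\X(g-1)_{\mac{g-1}}$, and $C_0(\X,g-1)=0$. The middle one is simply the instance $d=g-1$ of the identity furnished by $M(\X)=1$. The first and third are precisely the binomial bookkeeping already carried out inside the proof of Proposition~\ref{P:306}: descending from degree $g$ to $g-1$ via Theorem~\ref{T:202} and Corollary~\ref{C:203} produces equation~\eqref{EQ:313}, in which the leading entry stays equal to $r$ and the bottom entry is $\alpha+1>0$, forcing $C_0(\X,g-1)=0$. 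The main obstacle, and the step I would write out most carefully, is confirming that the string appearing in~\eqref{EQ:313} is genuinely the $(g-1)$-binomial expansion of $H_\X(g-1)$ (strictly decreasing top indices and positive bottom index), since only then may I legitimately read off $a_{g-1}=r$ and $C_0(\X,g-1)=0$ from it; this is where the shifts introduced by Corollary~\ref{C:203} must be tracked honestly.

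Finally, with these three facts in hand I would apply Corollary~\ref{coro2} at $d=g-1$, using here (and only here) that $\X$ is reduced and equidimensional, to conclude that $\X$ is a hypersurface in some linear subspace $\Lambda\subset\p^n$. Theorem~\ref{T:311} then gives $G(\X)=\deg(\X)$, which settles the case $M(\X)<G(\X)$ and hence the whole dichotomy. I would close by remarking that the disjunction is a genuine (inclusive) ``or'': a hypersurface of degree $>1$ in a proper linear subspace has $G(\X)=\deg(\X)$ while $M(\X)=1<G(\X)$, so the left alternative can hold with the right one failing.
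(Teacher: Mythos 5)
Your proposal is correct and takes essentially the same route as the paper: the paper's proof also works at the single degree $d=G(\X)-1$, uses $M(\X)<G(\X)$ to get $\Delta H_\X(d)=H_\X(d)_{\mac d}$ there, cites equation~\eqref{EQ:313} from the proof of Proposition~\ref{P:306} for $C_0(\X,d)=0$, and concludes via Corollary~\ref{coro2} (with Theorem~\ref{T:311} implicit). Your write-up simply makes explicit some points the paper leaves tacit, namely $G(\X)\geq 2$, the hypothesis $a_{g-1}=r$ of Corollary~\ref{coro2}, and the verification that \eqref{EQ:313} really is the $(g-1)$-binomial expansion.
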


\begin{proof}
Let $d=G(\X)-1$. If $M(\X)<G(\X)$, then $\Delta H_\X(d)=H_\X(d)_{\mac
d}$. Furthermore, by equation~\eqref{EQ:313}, we see that
$C_0(\X,d)=0$, and so $G(\X)=\deg(\X)$ by Corollary~\ref{coro2}, as
we wished.
\end{proof}

As an immediate corollary of this, we get (for reduced and equidimensional closed subschemes of $\p^n$) a characterization of the equality $M(\X) = 1$.

\begin{Coro}\label{M(X)=1}
Let $\X$ be a reduced equi-dimensional closed subscheme of $\p^n$.  $M(\X) = 1$ if and only if $\X$ is a hypersurface in a linear subspace of $\p^n$.

\begin{proof} From Corollary~\ref{C:403} we obtain that either $M(\X) = G(\X) = 1$ or $G(\X) = \deg(\X)$.  In the first case $\X$ is a linear subspace of $\p^n$ (see Theorem~\ref{P:307}).  The second case was characterized in Theorem~\ref{T:311}, and is precisely the assertion of the Corollary.
\end{proof}
\end{Coro}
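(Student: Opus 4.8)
The plan is to reduce the biconditional to the single equivalence $M(\X)=1 \iff G(\X)=\deg(\X)$ and then quote Theorem~\ref{T:311}, which already identifies $G(\X)=\deg(\X)$ with being a hypersurface in a linear subspace. The tools I would line up first are Proposition~\ref{P:301} ($M(\X)\le G(\X)$), Proposition~\ref{P:306} ($M(\X)<G(\X)\Rightarrow M(\X)=1$), the dichotomy of Corollary~\ref{C:403} (for reduced equidimensional $\X$, either $G(\X)=\deg(\X)$ or $G(\X)=M(\X)$), and the inequality $\deg(\X)=C_r(\X)\le G(\X)$ from equation~\eqref{sht}.

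For the forward implication, assume $M(\X)=1$. Corollary~\ref{C:403} gives two cases. If $G(\X)=\deg(\X)$, Theorem~\ref{T:311} finishes it at once. Otherwise $G(\X)=M(\X)=1$, and then $1\le \deg(\X)\le G(\X)=1$ forces $\deg(\X)=1=G(\X)$; so again $G(\X)=\deg(\X)$ (here $\X$ is a degree-one reduced equidimensional scheme, i.e.\ a linear $\Lambda\cong\p^r$, which one also reads off directly from Proposition~\ref{P:307}(i), and a linear $\p^r$ is a degree-one hypersurface inside a $\p^{r+1}$). In either case Theorem~\ref{T:311} yields that $\X$ is a hypersurface in a linear subspace.

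The converse is the step I expect to be the \emph{main obstacle}. The reason is that the dichotomy in Corollary~\ref{C:403} is \emph{inclusive}: its proof only establishes $M(\X)<G(\X)\Rightarrow G(\X)=\deg(\X)$, so knowing $G(\X)=\deg(\X)$ does not by itself rule out $M(\X)=G(\X)$, and I must actively force Green's equality down to degree $1$. So suppose $\X$ is a hypersurface of degree $d$ in a linear subspace. Theorem~\ref{T:311} gives $G(\X)=\deg(\X)=d$; since $G(\X)=\sum_{i=0}^{r}C_i(\X)$ while $\deg(\X)=C_r(\X)$, the equality $G=\deg$ forces $C_i(\X)=0$ for $0\le i\le r-1$, in particular $C_0(\X)=0$ when $r\ge 1$. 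Because maximal growth persists from degree $G(\X)$ (Theorem~\ref{persistence number}(c) and Remark~\ref{nochange}), the degree-$G(\X)$ Gotzmann coefficient coincides with the stable one, so $C_0(\X,G(\X))=0$; and, since $M(\X)\le G(\X)$ by Proposition~\ref{P:301}, Green's equality $\Delta H_\X(G(\X))=H_\X(G(\X))_{\mac{G(\X)}}$ holds at $t=G(\X)$.

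With these two facts in hand at $t=G(\X)$, I would run Lemma~\ref{L:303} downward: the hypotheses $\Delta H_\X(t)=H_\X(t)_{\mac t}$ and $C_0(\X,t)=0$ propagate through Remark~\ref{rem} to every $t\le G(\X)$, so Green's equality holds for all $1\le t\le G(\X)$; combined with Proposition~\ref{P:301}, which supplies it for all $t\ge G(\X)$, this gives equality for every $t\ge 1$, that is $M(\X)=1$. The descent needs $r\ge 1$; the remaining case $r=0$ (collinear points, where $C_0(\X)=\deg\neq 0$) is precisely the computation already recorded in Example~\ref{Ex307}(a). If one prefers to avoid the descent entirely, the converse can instead be checked by hand from $H_\X(t)=\binom{r+1+t}{r+1}-\binom{r+1+t-d}{r+1}$ together with $\Delta H_\X(t)=H_{\X\cap\mathcal H}(t)=\binom{r+t}{r}-\binom{r+t-d}{r}$ (multiplication by a general linear form being injective on the Cohen--Macaulay ring $S_\X$), verifying the binomial identity $H_{\X\cap\mathcal H}(t)=H_\X(t)_{\mac t}$ in each degree; I would use the descent route to stay within the paper's machinery.
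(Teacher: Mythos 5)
Your proof is correct, and its forward half is the paper's argument almost verbatim: Corollary~\ref{C:403} splits $M(\X)=1$ into the cases $G(\X)=M(\X)=1$ (a linear subspace, hence a degree-one hypersurface in a linear subspace, via Proposition~\ref{P:307}) and $G(\X)=\deg(\X)$, which Theorem~\ref{T:311} converts into the geometric statement. Where you genuinely depart from the paper is the converse, and your diagnosis of it as the main obstacle is exactly right: the paper's own proof dismisses it with the clause that Theorem~\ref{T:311} ``is precisely the assertion of the Corollary,'' which silently uses the implication $G(\X)=\deg(\X)\Rightarrow M(\X)=1$. That implication is not a formal consequence of Corollary~\ref{C:403}, whose dichotomy is inclusive and does not by itself exclude $M(\X)=G(\X)=\deg(\X)>1$. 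Your descent argument supplies the missing step using only the paper's own tools: $G(\X)=\deg(\X)$ forces $C_i(\X)=0$ for $0\le i\le r-1$, hence $C_0(\X)=0$ when $r\ge 1$; persistence of maximal growth from degree $G(\X)$ onward (Theorem~\ref{persistence number}(c) with Remark~\ref{nochange}) identifies $C_0(\X,G(\X))$ with the stable coefficient $C_0(\X)=0$; Proposition~\ref{P:301} gives Green's equality at $t=G(\X)$; and Lemma~\ref{L:303} with Remark~\ref{rem} push both conditions down to every $t\ge 1$, yielding $M(\X)=1$, with the case $r=0$ (collinear points) covered by Example~\ref{Ex307}(a). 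This is in fact the same mechanism the paper deploys later inside the proof of Theorem~\ref{T:404}, where $C_0(\X)=0$ is asserted to give $1=M(\X)<G(\X)$ by Lemma~\ref{L:303}; so your write-up is best read as the paper's intended proof with its suppressed converse made explicit. The paper's version buys brevity; yours buys an actually complete biconditional.
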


\begin{Thm}\label{T:404}
Let $\X$ be a reduced equi-dimensional closed subscheme in $\p^n$. If $\X$ is not a hypersurface in a linear subspace (i.e., $G(\X)\neq \deg(\X)$) then
$$
C_\ell(\X)\neq 0
$$
for every  $0\leq \ell \leq r$.
\end{Thm}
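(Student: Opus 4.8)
The plan is to reduce everything to a single statement about the zeroth Gotzmann coefficient of general linear sections and then to invoke Corollary~\ref{coro2}. Since $C_r(\X)=\deg(\X)\neq 0$ by equation~\eqref{degree}, it suffices to treat the range $0\le \ell\le r-1$. By Remark~\ref{R:205} we have $C_\ell(\X)=C_0(\Y_\ell)$, where $\Y_\ell=\X\cap\Lambda_\ell$ is a general linear section of dimension $r-\ell\ge 1$; by the Bertini theorem of \cite{Na} each $\Y_\ell$ is again reduced and equidimensional. Thus the theorem follows once I establish the key reduction: for a reduced equidimensional scheme $\Y$ of positive dimension, $C_0(\Y)=0$ forces $\Y$ to be a hypersurface in a linear subspace.

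For this reduction I would check the hypotheses of Corollary~\ref{coro2} at the distinguished degree $d=G(\Y)$. First, since $d\ge M(\Y)$ by Proposition~\ref{P:301}, equality in Green's bound $\Delta H_\Y(d)=H_\Y(d)_{\mac d}$ holds at $t=d$. Second, because maximal growth persists for all $j\ge G(\Y)$, the $d$th Macaulay difference set already coincides with the (stable) Gotzmann difference set; hence its top entry is $\dim\Y$, so $a_d=\dim\Y$, and $C_0(\Y,d)=C_0(\Y)=0$ by assumption. These are exactly the hypotheses of Corollary~\ref{coro2}, which then yields that $\Y$ is a hypersurface in a linear subspace, establishing the reduction.

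It remains to verify that each $\Y_\ell$ with $0\le\ell\le r-1$ is \emph{not} a hypersurface in a linear subspace, so that the contrapositive of the reduction gives $C_0(\Y_\ell)\neq 0$, i.e. $C_\ell(\X)\neq 0$. Here I would first pass to the linear span of $\X$, so that ``not a hypersurface in a linear subspace'' becomes ``nondegenerate of codimension $\ge 2$'' (the equivalence underlying Theorem~\ref{T:311}). The point is that this property is inherited by general hyperplane sections: for $\dim\X\ge 1$ a general linear form is a nonzerodivisor on the saturated, reduced ring $S_\X$, so $H_{\Y_1}(1)=\Delta H_\X(1)=H_\X(1)-1$, which shows $\Y_1$ remains nondegenerate inside its hyperplane while its codimension is unchanged. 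Iterating this $\ell$ times keeps every $\Y_\ell$ nondegenerate of codimension $\ge 2$, hence not a hypersurface in a linear subspace.

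I expect the main obstacle to be the geometric bookkeeping in the last paragraph rather than the Hilbert-function computation: one must ensure that the \emph{general} iterated linear section simultaneously stays reduced, equidimensional, and nondegenerate of the correct codimension, which is exactly where the Bertini theorem of \cite{Na} (valid in every characteristic) is essential. By contrast, the reduction step is a clean application of Corollary~\ref{coro2} once the three hypotheses at $d=G(\Y)$ are verified.
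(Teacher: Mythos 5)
Your first two steps are correct and essentially reproduce the paper's argument in a different packaging: instead of passing through Corollary~\ref{C:403} (either $G(\X)=\deg(\X)$ or $G(\X)=M(\X)$) and Lemma~\ref{L:303}, you verify the hypotheses of Corollary~\ref{coro2} directly at $d=G(\Y)$ --- equality in Green's bound there by Proposition~\ref{P:301}, and $a_d=\dim\Y$, $C_0(\Y,d)=C_0(\Y)$ because the Macaulay difference set has stabilized by degree $G(\Y)$. Both routes rest on the same underlying machinery (Theorem~\ref{L:401}), and your reduction correctly settles the cases $\ell=0$ and $\ell=r$. The genuine gap is in your last step, where you must show that the general sections $\Y_\ell$, for $1\le\ell\le r-1$, are not hypersurfaces in linear subspaces.

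Your argument there --- pass to the linear span and claim that ``nondegenerate of codimension $\ge 2$'' is inherited by general hyperplane sections because $H_{\Y_1}(1)=\Delta H_\X(1)$ --- fails on both counts. The ideal of $\Y_1$ is the \emph{saturation} of $I_\X+(L)$, and saturating can acquire new linear forms, so one only has $H_{\Y_1}(1)\le\Delta H_\X(1)$; and the geometric conclusion itself is false for reducible schemes, which your hypotheses allow. Take $\X$ to be two disjoint $2$-planes in $\p^5$: it is reduced, equidimensional, nondegenerate of codimension $3$, but its general hyperplane section is a pair of skew lines in $\p^4$, which spans only a $\p^3$ (here $\Delta H_\X(1)=5$ while $H_{\Y_1}(1)=4$). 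Nondegeneracy is preserved under general sections of \emph{irreducible} varieties, but not of reduced equidimensional ones. What is true, and what the proof actually needs, is the weaker statement that ``not a hypersurface in a linear subspace'' propagates --- in the example the skew lines still have codimension $2$ in their span. The paper establishes exactly this, in contrapositive form: if $C_\ell(\X)=0$, then $\Y_\ell$ is a hypersurface in an $(r-\ell+1)$-dimensional linear subspace by Theorem~\ref{T:311}, and then --- this is the lifting step, using reduced equidimensionality, that your proposal is missing --- $\X$ itself must be contained in an $(r+1)$-dimensional linear subspace, hence is a hypersurface in a linear subspace, giving $G(\X)=\deg(\X)$ by Theorem~\ref{T:311}, a contradiction. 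To repair your proof you must replace the false inheritance claim with such a lifting argument.
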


\begin{proof}  Let $r=\dim(\X)$. First of all, note that
$$
C_r(\X)=\deg(\X)\ne 0.
$$

Moreover, by Corollary~\ref{C:403} if $G(\X)\ne M(\X)$,  $\X$ cannot be a reduced equi-dimensional subscheme in $\p^n$ since  $G(\X)\neq \deg(\X)$, which is a contradiction. In other words,
$$
G(\X) = M(\X).
$$

\smallskip

If $C_0(\X)=0$, then $1=M(\X)< G(\X)$ by
Lemma~\ref{L:303}, which is also a contradiction. Hence $C_0(\X)\ne 0$.

\smallskip

Now suppose $C_\ell(\X)=0$ for some $0<\ell<r$. Since $\X$ is a reduced
equi-dimensional closed subscheme in $\p^n$,  for a $(n-\ell)$-dimensional general linear subspace $\Lambda_{\ell}$ in $\p^n$,
$$
\Y:=\X\cap\Lambda_{\ell}
$$
is also a reduced equi-dimensional closed subscheme in $\p^n$  by Bertini's Theorem. But then, by Remark~\ref{R:205},
$$
C_0(\Y)=C_0(\X\cap\Lambda_{\ell})=C_{\ell}(\X)=0,
$$
and
$$
\dim(\Y)=\dim(\X)-\ell=r-\ell.
$$

If $G(\Y)\ne \deg(\Y)$, then, by the same argument as above, $C_0(\Y)\ne 0$, and thus
\begin{equation}\label{EQ:405}
G(\Y)=\deg(\Y).
\end{equation}
By Theorem~\ref{T:311}, we see that $\Y$ is a hypersurface
contained in a $(r-\ell+1)$-dimensional linear subspace $\Lambda$
in $\Lambda_\ell\subset \p^n$. Since $\X$ is a reduced
equi-dimensional closed subscheme and $\Y=\X\cap\Lambda_{\ell}$, $\X$
must be contained in an $(r+1)$-dimensional linear subspace
$\Lambda_{r+1}$ in $\p^n$. Hence $\X$ is also a hypersurface
contained in $\Lambda_{r+1}$ and thus, by Theorem~\ref{T:311},
$$
G(\X)=\deg(\X),
$$
a contradiction. Therefore,
$$
C_\ell(\X)\ne 0
$$
for every $0\le \ell \le r$, as we wished.

%
%
\end{proof}

\begin{Coro}\label{Gotz_partition of integral}
Let $\X$ be a non-degenerate reduced equi-dimensional closed subscheme of
codimension $\geq 2$ in $\p^n$. Then, $C_\ell(\X)\neq 0$ for every $0 \leq
\ell \leq \dim(\X)$.
\end{Coro}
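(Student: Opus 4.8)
The plan is to deduce this immediately from Theorem~\ref{T:404}. That theorem already guarantees $C_\ell(\X)\neq 0$ for every $0\leq \ell \leq \dim(\X)$ under the single hypothesis that $\X$ be a reduced equidimensional subscheme which is \emph{not} a hypersurface in a linear subspace, equivalently $G(\X)\neq\deg(\X)$ (this equivalence being exactly Theorem~\ref{T:311}). So all that remains is to check that the two geometric hypotheses here---non-degeneracy and codimension at least $2$---force $\X$ into that non-exceptional case, and then the conclusion follows verbatim.

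To carry this out I would argue by contradiction. Suppose $\X$ \emph{were} a hypersurface $F_\Lambda$ of some degree in an $(r+1)$-dimensional linear subspace $\Lambda\subseteq\p^n$, where $r=\dim(\X)$. Since $\X$ is non-degenerate it spans $\p^n$; yet $\X\subseteq\Lambda$ forces $\Lambda\supseteq\langle \X\rangle=\p^n$, whence $\Lambda=\p^n$ and $r+1=n$. But then $\codim(\X)=n-r=1$, contradicting the assumption $\codim(\X)\geq 2$. Therefore $\X$ is not a hypersurface in any linear subspace, i.e.\ $G(\X)\neq\deg(\X)$, and Theorem~\ref{T:404} applies directly to give $C_\ell(\X)\neq 0$ for all $0\leq\ell\leq r$.

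There is essentially no obstacle here: the entire analytic content---the non-vanishing of the intermediate Gotzmann coefficients---has already been established in Theorem~\ref{T:404} via the inductive hyperplane-sectioning argument resting on Lemma~\ref{L:303}, Corollary~\ref{coro2}, and Theorem~\ref{T:311}. The corollary's only role is to repackage the hypothesis $G(\X)\neq\deg(\X)$ in the more familiar geometric language of non-degeneracy and codimension, and the verification above is a one-line dimension count. The sole point worth double-checking is that both hypotheses are genuinely used: codimension $\geq 2$ alone rules out $\X$ being a hypersurface in all of $\p^n$ but not in a proper $\Lambda$, while non-degeneracy alone rules out $\X\subseteq\Lambda\subsetneq\p^n$; it is their conjunction that excludes the exceptional case, exactly as in the contradiction above.
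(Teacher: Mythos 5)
Your proposal is correct and follows exactly the same route as the paper: both reduce the statement to Theorem~\ref{T:404} by using Theorem~\ref{T:311} to rule out the hypersurface-in-a-linear-subspace case, with your contradiction argument merely spelling out the one-line dimension count (non-degeneracy forces $\Lambda=\p^n$, hence codimension $1$) that the paper leaves implicit.
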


\begin{proof}
Since $\X$ is a non-degenerate closed subscheme of codimension $\geq 2$,
$G(\X)\neq \deg(\X)$ by Theorem~\ref{T:311}, and hence the statement immediately follows from Theorem~\ref{T:404}.
\end{proof}

%
%
%
%

\section{Gotzmann Coefficients for Schemes Containing Points with UPP}\label{GN_of_UPP}

In this section, we will give another situation for which the Hilbert
polynomial of a projective scheme has non-vanishing Gotzmann
coefficients. These results give a partial answer to the
conjecture proposed by Bigatti-Geramita-Migliore (Conjecture 4.9 in \cite{BGM}).
Note that a reduced, finite set of points $\Z$ is said to have the
Uniform Position Property (UPP) if for any  subset $\Y$ of $\Z$ having
cardinality $r$  we have
$$
H_\Y(\ell)=\min\{H_\Z(\ell),r\} \hbox{ for all } \ell .
$$

In \cite{BGM}, the authors showed how the imposition of uniform
position on a set of points is reflected in both the ideal of the points and in the values of the Hilbert function of the points.  For example,

\begin{Lem}[Lemma 4.4 \cite{BGM}] \label{lem4}
Let $\Z$ be a finite  reduced set of points in $\p^n$ with UPP and
suppose that the forms in $(I_{Z})_d$ have a common factor $F$. Then
$F$ is irreducible and $((I_\Z)_{\leq d})=(F)$.
\end{Lem}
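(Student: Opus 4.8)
The plan is to reduce the statement to two facts, namely that $F$ cuts out all of $\Z$ and that $F$ is irreducible, after which the ideal equality becomes formal. To set this up, I take $F$ to be the \emph{maximal} common factor (the fixed component) of the linear system $(I_\Z)_d$, with $e=\deg F\geq 1$, so that $(I_\Z)_d=F\cdot W$ where $W\subseteq R_{d-e}$ has no common factor. Splitting $\Z=\Z'\sqcup\Z''$ according to whether a point lies on $V(F)$, the elementary observation that a multiple $FH$ vanishes on $\Z$ if and only if $H$ vanishes on $\Z''$ (since $F$ is nowhere zero on $\Z''$) pins down $W$ exactly: $W=(I_{\Z''})_{d-e}$, whence $(I_\Z)_d=F\cdot(I_{\Z''})_{d-e}$. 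The whole lemma then follows once I know $\Z''=\emptyset$ (so $\Z\subseteq V(F)$ and $(I_\Z)_d=F\cdot R_{d-e}$) and that $F$ is irreducible.

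Two consequences require no uniform position and I would record them first. For the ideal equality, assume $\Z\subseteq V(F)$ (hence $F\in(I_\Z)_e$ and $e\leq d$, giving $(F)\subseteq((I_\Z)_{\leq d})$), and for the reverse inclusion take $0\neq G\in(I_\Z)_m$ with $m\leq d$: then $Gx_i^{\,d-m}\in(I_\Z)_d\subseteq F\cdot R_{d-e}$ for every $i$, so $F\mid Gx_i^{\,d-m}$; since $R$ is a UFD and no linear form divides two distinct coordinates, comparing valuations at each prime factor of $F$ forces $F\mid G$. Thus $((I_\Z)_{\leq d})=(F)$. The same gcd computation yields the key \emph{degree obstruction}, using only the common-factor hypothesis $(I_\Z)_d\subseteq F\cdot R_{d-e}$: if $(I_\Z)_t\neq 0$ for some $t<e$, any $0\neq G\in(I_\Z)_t$ satisfies $G\cdot R_{d-t}\subseteq(I_\Z)_d\subseteq F\cdot R_{d-e}$, forcing $F\mid G$, which is impossible for degree reasons; and a degree-$e$ form $G$ through $\Z$ must satisfy $F\mid G$, hence $G=cF$, so $F$ is the only degree-$e$ form through $\Z$.

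The uniform position input is the following transfer principle, read off from the min-formula: if $\Y\subseteq\Z$ with $|\Y|\geq H_\Z(t)$ lies on a hypersurface of degree $t$, then $H_\Y(t)=\min\{H_\Z(t),|\Y|\}=H_\Z(t)<\binom{n+t}{n}$, so $(I_\Z)_t\neq 0$ and \emph{all} of $\Z$ lies on a degree-$t$ hypersurface. Combined with the degree obstruction this is lethal. For irreducibility I would suppose $F=AB$ with $\deg A,\deg B\geq 1$; granting $\Z\subseteq V(F)=V(A)\cup V(B)$, maximality of $F$ makes both $\Z\cap V(A)$ and $\Z\cap V(B)$ proper nonempty subsets, and the larger of the two lies on a hypersurface of degree $<e$; feeding it into the transfer principle produces a form of degree $<e$ through all of $\Z$, contradicting the degree obstruction. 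For $\Z\subseteq V(F)$ itself, if a degree-$e$ hypersurface carries at least $H_\Z(e)$ points of $\Z$ then transfer gives $(I_\Z)_e\neq 0$, so by the previous paragraph the unique such form is $F$, forcing $F\in I_\Z$ and $\Z''=\emptyset$.

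The main obstacle is exactly the size bookkeeping in the transfer principle: it upgrades a subset's containment in a degree-$t$ hypersurface to a statement about all of $\Z$ only when that subset has at least $H_\Z(t)$ points, and neither the factorization split nor the decomposition $\Z=\Z'\sqcup\Z''$ comes with a priori control over which piece is large. Concretely, the hard case is $(I_\Z)_e=0$ with $\Z'=\Z\cap V(F)$ \emph{small} and $\Z''$ large, which the transfer principle alone does not kill. I expect this to be the technical heart: it must be excluded by combining the dimension identity $\dim(I_\Z)_d=\dim(I_{\Z''})_{d-e}$ (i.e. $\binom{n+d}{n}-H_\Z(d)=\binom{n+d-e}{n}-H_{\Z''}(d-e)$) with the min-formula $H_{\Z''}(d-e)=\min\{H_\Z(d-e),|\Z''|\}$ and, crucially, the fact that $W=(I_{\Z''})_{d-e}$ has no common factor, which limits how the points off $V(F)$ can be distributed. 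Once this interplay forces $\Z''=\emptyset$, the irreducibility and the UFD transfer steps are comparatively routine.
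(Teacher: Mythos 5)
Your framework is sound, and several of your steps are genuinely correct: reducing to the maximal common factor, the identification $(I_\Z)_d=F\cdot(I_{\Z''})_{d-e}$, the UFD argument (from $F\mid Gx_i^{d-m}$ for all $i$ deduce $F\mid G$), the degree obstruction it yields, and the UPP transfer principle. (Note the paper itself imports this lemma from [BGM, Lemma 4.4] without proof, so you are being measured against a complete argument rather than against text in the paper.) But the proposal has a genuine gap at its central point: you never prove $\Z''=\emptyset$, i.e.\ that $F$ actually vanishes on all of $\Z$; you explicitly defer it (``I expect this to be the technical heart''). That step is not routine bookkeeping --- it is where UPP carries the entire lemma, and the ingredients you name do suffice, but only via a case analysis you do not carry out. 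Namely: from $(I_\Z)_d=F\cdot(I_{\Z''})_{d-e}$ one gets $\dim(I_\Z)_d=\dim(I_{\Z''})_{d-e}$, and UPP gives $H_{\Z''}(d-e)=\min\{H_\Z(d-e),|\Z''|\}$. If the minimum is $H_\Z(d-e)$, then $\dim(I_\Z)_d=\dim(I_\Z)_{d-e}$, so multiplication by $\ell^{\,e}$, for $\ell$ any linear form, is an injection of equal-dimensional spaces and hence $(I_\Z)_d=\ell^{\,e}(I_\Z)_{d-e}$; every nonzero form of $(I_\Z)_d$ would then be divisible by $\ell^{\,e}$ for infinitely many non-proportional $\ell$, which is absurd since $(I_\Z)_d\neq 0$. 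If instead the minimum is $|\Z''|$, then $H_\Z(d)=\binom{n+d}{n}-\binom{n+d-e}{n}+|\Z''|$, so $|\Z'|\geq \binom{n+d}{n}-\binom{n+d-e}{n}\geq\binom{n+e}{n}-1$, with strict inequality when $d>e$; applying UPP to $\Z'\subseteq V(F)$ then gives either $(I_\Z)_e\neq 0$, whence your degree obstruction yields $F\in I_\Z$ and $\Z''=\emptyset$, or $|\Z'|\leq\binom{n+e}{n}-1$, forcing $d=e$, where $\Z''\neq\emptyset$ would make $(I_\Z)_d=F\cdot(I_{\Z''})_0=0$, a contradiction. Without some such argument the lemma is simply unproved.

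The irreducibility step also fails as written, even granting $\Z\subseteq V(F)$. You feed ``the larger of'' $\Z\cap V(A)$, $\Z\cap V(B)$ into the transfer principle, but that principle requires the subset to contain at least $H_\Z(t)$ points, and a set of size $\geq|\Z|/2$ need not: in the bad case both pieces are small ($|\Z\cap V(A)|<H_\Z(a)=\binom{n+a}{n}$ and its complement $<\binom{n+b}{n}$), and the only size information your argument provides ($|\Z|\geq H_\Z(e-1)=\binom{n+e-1}{n}$) does not exclude this --- for $n=2$, $a=b=1$ it permits $|\Z|=4$ with two points on each line. The repair is to bound \emph{both} pieces by UPP, giving $|\Z|\leq\binom{n+a}{n}+\binom{n+b}{n}-2$, and to play this against the lower bound $|\Z|\geq H_\Z(d)=\binom{n+d}{n}-\binom{n+d-e}{n}\geq\binom{n+e}{n}-1$, which strictly exceeds $\binom{n+a}{n}+\binom{n+b}{n}-2$ whenever $n\geq 2$; that lower bound is available only after $\Z''=\emptyset$ is established, and it never appears in your proposal. (Some use of $n\geq 2$ is unavoidable: in $\p^1$ every finite reduced set has UPP and $I_\Z$ is principal, generated by a product of linear forms, so the statement is false there.)
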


The next lemma will be used to prove Theorem~\ref{T:504} and also to illustrate a property of schemes for which some Gotzmann coefficient is 0.

\begin{Lem}\label{lem3}
Let $I$ be a homogeneous ideal in $R=k[x_0,x_1,\dots,x_n]$ such that $\reg (I)< d$. For
general linear forms $L_1, \ldots, L_\ell \in R$, suppose that all elements of
$$
\frac{I_d+(L_1,\ldots,L_\ell)_d}{(L_1,\ldots,L_\ell)_d}
$$
have a common factor of positive degree in $R/(L_1,\ldots,L_\ell)$.
Then  all elements of $I_d$ also have a common factor of positive degree in $R$.
\end{Lem}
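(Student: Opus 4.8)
The plan is to translate the statement about forms into one about their common zero loci and then apply a Bertini-type principle that a \emph{general} linear section preserves the codimension of every component. Since the paper works over an algebraically closed field, the polynomial ring $R$ is a UFD and Hilbert's Nullstellensatz applies, so over $R$ the assertion that all forms of $I_d$ share a common factor of positive degree is equivalent to the assertion that the common zero locus $V(I_d)\subseteq\p^n$ of the degree-$d$ piece $I_d$ has an irreducible component of codimension one. Indeed, if $p$ is an irreducible form with $V(p)\subseteq V(I_d)$ then every $F\in I_d$ vanishes on $V(p)$ and is therefore divisible by $p$; conversely a common irreducible factor of all the forms produces a codimension-one component of $V(I_d)$. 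Because $\reg(I)<d$ forces every minimal generator of $I$ to have degree $<d$, one checks that $V(I_d)=V(I)$, so I may work directly with the honest zero locus of $I$.

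First I would set up the same dictionary after restriction. Writing $\Lambda=V(L_1,\dots,L_\ell)\cong\p^{\,n-\ell}$ for the general linear subspace cut out by the chosen forms, a point $[p]\in\Lambda$ lies in the zero locus of the image
\[
\frac{I_d+(L_1,\ldots,L_\ell)_d}{(L_1,\ldots,L_\ell)_d}\subseteq (R/(L_1,\dots,L_\ell))_d
\]
precisely when $F(p)=0$ for every $F\in I_d$, so this zero locus equals $V(I_d)\cap\Lambda=V(I)\cap\Lambda$. Applying the UFD argument of the previous paragraph inside $R/(L_1,\dots,L_\ell)$, the hypothesis that the restricted forms have a common factor of positive degree is equivalent to saying that $V(I)\cap\Lambda$ has a component of codimension one in $\Lambda$.

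I would finish by contraposition. Suppose $I_d$ has \emph{no} common factor of positive degree, so every component of $V(I)=V(I_d)$ has $\codim\geq 2$ in $\p^n$. For a general linear subspace $\Lambda$ of codimension $\ell$ (equivalently, cutting successively by $\ell$ general hyperplanes), Bertini's Theorem in its dimension-preserving form --- valid in all characteristics by \cite{Na}, exactly as used for Theorem~\ref{L:401} --- shows that each component $V_i$ of $V(I)$ meets $\Lambda$ in a (possibly empty) set all of whose components again have codimension $\geq 2$ in $\Lambda$. Hence $V(I)\cap\Lambda$ carries no codimension-one component, so the restricted forms admit no common factor of positive degree; choosing $L_1,\dots,L_\ell$ in the dense open locus where both this Bertini conclusion and the hypothesis hold produces a contradiction. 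Therefore $I_d$ must have a common factor of positive degree. The main obstacle will be making the Bertini step airtight: one must rule out that a general section of a codimension $\geq 2$ variety acquires an \emph{accidental} codimension-one component, and one must verify that the genericity needed for the section theorem is an open dense condition compatible with the genericity already imposed on $L_1,\dots,L_\ell$. The hypothesis $\reg(I)<d$ plays the supporting but essential role of identifying $V(I_d)$ with $V(I)$ --- so the section theory is applied to the variety actually defined by $I$ --- and, should one prefer to induct on $\ell$ one hyperplane at a time, of guaranteeing (via the non-increase of regularity under general hyperplane section) that the restricted ideal again has regularity $<d$, so that the argument repeats verbatim.
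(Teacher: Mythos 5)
Your proof is correct, and it takes a genuinely different route from the paper's. The paper argues algebraically: it inducts on $\ell$ to reduce to a single general linear form $L$, first treats the case where $I$ is saturated --- there $L$ is a nonzerodivisor on $R/I$, so $\dim(R/I)=\dim(S/J)+1$ with $S=R/(L)$ and $J=(I+(L))/(L)$; the common factor downstairs gives $J\subseteq(\overline F)$, forcing $\dim(S/J)=n-1$, hence $\dim(R/I)=n$, so $I$ has a height-one minimal prime, which is principal because $R$ is a UFD --- and then removes the saturation hypothesis using $\sat(I)\le\reg(I)<d$. You instead run everything through the Nullstellensatz dictionary (a common factor of positive degree exists if and only if the zero locus has a codimension-one component, since codimension-one subvarieties of projective space are hypersurfaces) and a contrapositive dimension count: if every component of $V(I_d)=V(I)$ has codimension $\ge 2$, the same holds for its section by a general codimension-$\ell$ linear space. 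Both proofs pivot on the same UFD fact, but yours buys two things: it never needs the saturation reduction, because zero loci cannot distinguish $I$ from $I^{\sat}$, and it shows the hypothesis $\reg(I)<d$ is nearly superfluous for this statement --- you use it only to identify $V(I_d)$ with $V(I)$, and even that could be avoided by running the argument on the ideal generated by $I_d$. The paper's argument, in exchange, stays entirely inside commutative algebra and yields slightly more in the saturated case (all of $I$, not merely $I_d$, lies in the principal prime). Two small tightenings for your write-up: the ``Bertini step'' you flag as the main obstacle is just standard dimension theory and needs no irreducibility theorem --- for general $\Lambda$ each component of $V_i\cap\Lambda$ has dimension exactly $\dim V_i-\ell$ (or $V_i\cap\Lambda=\emptyset$ when $\dim V_i<\ell$), and every component of $\bigl(\bigcup_i V_i\bigr)\cap\Lambda$, being irreducible, lies in some $V_i\cap\Lambda$ and hence is a component of it, so no ``accidental'' codimension-one component can arise; and you should dispose of the trivial case $I_d=0$ so that the zero loci are proper subvarieties and the dictionary applies.
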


\begin{proof} Let $J$ be a homogeneous  ideal of
$R/(L_1,\ldots,L_{\ell-1})$ defined by
$$
J=\frac{I+(L_1,\ldots,L_{\ell-1})}{(L_1,\ldots,L_{\ell-1})}.
$$
We let $\overline{L}_\ell$ denote the image of
$L_\ell$ in $R/(L_1,\ldots,L_{\ell-1})$. Since
$$
\frac{I_d+(L_1,\ldots,L_\ell)}{(L_1,\ldots,L_\ell)}\cong\frac{J_d+(\overline{L}_\ell)}{(\overline{L}_\ell)},
\quad \text{and} \quad
\reg(J)\leq \reg(I)<d,
$$
it is enough to consider the case
$\ell=1$ by induction on $\ell$.

\smallskip

Now suppose $\ell=1$ and  $I$ is a saturated ideal. Then we may assume that a general linear form $L_1$ is a non-zero divisor of $R/I$. Let $S:=R/(L_1)$. Then  we have
$$
\dim(R/I)=\dim(S/J)+1.
$$
Moreover, since the ideal $J$ has a common factor in $S$, there exists an  irreducible polynomial $F$ in $R$ such that $J \subset (\overline F)$. This means that
$$
n-1 = \dim(S/(\overline F)) \leq \dim(S/J) =\dim (R/I)-1 \le n-1 \qquad (\text{since }
{\rm height}(I)\ge 1).
$$
Hence, $\dim(R/I)= \dim(S/J)+1= n$, and so there is an associated prime $P$ of $I$ such that $\dim(R/P)=n$.  Furthermore, since $\dim(R/P)=n$,
$$
P=(G)
$$
for some irreducible polynomial $G$ in $R$. In other words,  $I \subset P=(G)$, and thus $I$ has a common divisor $G$.

\smallskip

For a homogeneous ideal $I$ in $R$, note that
$$
I_m=(I^{\sat})_m
$$
for a sufficiently large $m\gg 0$ and the saturation degree is $\le \reg(I)$. Therefore
$$
\left(\frac{I+(L)}{(L)}\right)_m=\left(\frac{I^{\sat}+(L)}{(L)}\right)_m,
$$
for such $m$, and thus $I_d$ has a common factor as above.  This completes the proof.
\end{proof}

\begin{Thm}\label{T:504}
Let $\Z$ be a finite non-degenerate reduced set of points in $\p^n$
with UPP and suppose that $0\ne ((I_\Z)_{\leq d})$ is saturated for
some $d>0$. If $\X$ is the closed subscheme defined by $((I_\Z)_{\leq
d})$ in $\p^n$ then, either
\begin{itemize}
  \item [(1)] $I_\X=(F)$ for some irreducible polynomial $F$, or
  \item [(2)]  $C_\ell(\X)\neq 0$  for every $0 \leq \ell \leq \dim(\X)$.
\end{itemize}
\end{Thm}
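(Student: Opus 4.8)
The plan is to prove the dichotomy by splitting on whether $G(\X)=\deg(\X)$, using Theorem~\ref{T:311} to locate a linear subspace and the Uniform Position Property of $\Z$ (through Lemmas~\ref{lem4} and~\ref{lem3}) to turn any common factor of the defining forms into a single irreducible generator. Two preliminary observations set the stage. First, since $\Z$ is non-degenerate we have $(I_\Z)_1=0$, and as $I_\X=((I_\Z)_{\le d})$ with $d\ge 1$ this ideal contains no linear form, so $\X$ is non-degenerate. Second, because $I_\X$ is the subideal of $I_\Z$ generated in degrees $\le d$, we have $(I_\X)_e=(I_\Z)_e$ for every $e\le d$; thus the Gotzmann data of $\X$ in these degrees is recorded by $\Z$, and the UPP of $\Z$ becomes available whenever a common factor surfaces in a degree $\le d$.

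If $G(\X)=\deg(\X)$, then Theorem~\ref{T:311} makes $\X$ a hypersurface of degree $\deg(\X)$ in some $(r+1)$-dimensional linear subspace $\Lambda$, and non-degeneracy forces $\Lambda=\p^n$, so $I_\X=(F)$ for a single form $F$. The degree-$d$ forms, $(I_\X)_d=(I_\Z)_d$, are then all multiples of $F$ and hence share the common factor $F$; Lemma~\ref{lem4} upgrades this to the assertion that $F$ is irreducible with $((I_\Z)_{\le d})=(F)$, which is conclusion~(1). So suppose instead $G(\X)>\deg(\X)$ and, aiming at conclusion~(2), that $C_\ell(\X)=0$ for some $0\le\ell\le r=\dim\X$. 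Since $C_r(\X)=\deg(\X)\ne 0$ by equation~\eqref{degree}, we have $\ell<r$. Cutting with a general linear subspace $\Lambda_\ell$ of dimension $n-\ell$ and setting $\Y=\X\cap\Lambda_\ell$, Remark~\ref{R:205} gives $C_0(\Y)=C_\ell(\X)=0$ (a statement about Hilbert polynomials alone, so no reducedness of $\X$ is needed), with $\dim\Y=r-\ell$. The Green-equality together with the vanishing of $C_0$ then places $\Y$ in the setting of Theorem~\ref{L:401}, which produces a common factor of the forms of $I_\Y$ in low degrees. Lemma~\ref{lem3} transfers this common factor back to $(I_\Z)_{d'}$ for a suitable $d'\le d$, and Lemma~\ref{lem4} then forces it to be irreducible with $I_\X=((I_\Z)_{\le d})=(F)$. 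But an irreducible hypersurface in $\p^n$ satisfies $G(\X)=\deg(\X)$ by Theorem~\ref{T:311}, contradicting $G(\X)>\deg(\X)$; hence no Gotzmann coefficient vanishes, which is conclusion~(2).

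The hard part is the second case, precisely because $\X$ --- being cut out by a truncation of $I_\Z$ --- need not be reduced or equidimensional, so the Bertini argument underlying Theorem~\ref{T:404} is unavailable and must be replaced by the common-factor transfer of Lemma~\ref{lem3}. I expect the main technical work to be the verification of the hypotheses of Theorem~\ref{L:401} for the sectioned scheme $\Y$ at a degree $k\le d$: establishing $\Delta H_\Y(k)=H_\Y(k)_{\langle k\rangle}$ (using Propositions~\ref{P:301} and~\ref{P:306} and the descent afforded by Lemma~\ref{L:303} and Remark~\ref{rem}) and, crucially, locating a degree at which the bottom index $\delta$ of the binomial expansion satisfies $\delta>1$, since it is part~(b) of Theorem~\ref{L:401} --- the part that actually yields the common factor --- that requires this. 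The bookkeeping indicated in the Remark following Theorem~\ref{L:401}, which tracks when $\delta(k)>1$ from inequalities of the form $a_i\le a_{i-1}+1$, is what I would use to pin down such a degree; once it lies at or below $d$, the equality $(I_\X)_e=(I_\Z)_e$ lets the UPP of $\Z$ complete the argument through Lemmas~\ref{lem3} and~\ref{lem4}.
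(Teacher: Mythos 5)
Your overall architecture is the paper's: you split according to $G(\X)=\deg(\X)$ versus $G(\X)>\deg(\X)$ (the paper splits according to which Gotzmann coefficient vanishes, which amounts to the same dichotomy), you handle the first case with Theorem~\ref{T:311} plus Lemma~\ref{lem4}, and in the second case you cut with a general linear space, invoke Theorem~\ref{L:401}~(b) to produce a common factor, transfer it with Lemma~\ref{lem3}, and close with Lemma~\ref{lem4} and Theorem~\ref{T:311}. That is exactly the paper's chain of results; your Case~A and the contradiction logic of Case~B are sound, as is your observation that Remark~\ref{R:205} needs no reducedness and that the Bertini argument of Theorem~\ref{T:404} must be replaced by Lemma~\ref{lem3} here.

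The gap is in the step you yourself flag as ``the main technical work'': the plan to verify the hypotheses of Theorem~\ref{L:401} (in particular $\delta>1$) at a degree $k\le d$ and then apply Lemma~\ref{lem3} at a degree $d'\le d$ is both unworkable and unnecessary. It is unworkable because Lemma~\ref{lem3} requires $\reg(I)$ to be strictly less than the working degree; here $I=I_\X=((I_\Z)_{\le d})$ may well have minimal generators in degree $d$ (this is the typical case), so $\reg(I_\X)\ge d\ge d'$ and the lemma cannot be invoked in the range to which you restrict yourself. It is unnecessary because the paper simply works at $s\gg 0$: there the Green equality $\Delta H_\Y(s)=H_\Y(s)_{\mac{s}}$ and the vanishing $C_0(\Y,s)=C_\ell(\X)=0$ hold by Remark~\ref{R:205}, the condition $\delta>1$ holds automatically (Remark~\ref{delta>1}), and $\reg(I_\X)<s$, so Theorem~\ref{L:401}~(b) and Lemma~\ref{lem3} combine to give a common factor $F$ of $(I_\X)_s$. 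The connection to UPP that you were trying to force into degrees $\le d$ then comes for free: since $I_\X$ is generated in degrees $\le d$, any $g\in(I_\X)_d=(I_\Z)_d$ satisfies $F\mid x_j^{s-d}g$ for every variable $x_j$, whence $F\mid g$; thus $F$ is a common factor of $(I_\Z)_d$ and Lemma~\ref{lem4} applies there. Replace your low-degree bookkeeping with this high-degree argument and your proof coincides with the paper's.
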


\smallskip

\begin{proof} First of all, note that
\begin{itemize}
\item[(1)] $\Delta H_\X(s)=H_\X(s)_{\langle s\rangle}$ for sufficiently large
$s\gg 0$,
\item[(2)]  $C_{\dim(\X)}(\X)=\deg (\X) \ne 0$.
\end{itemize}

\smallskip
For sufficiently large $s\gg 0$, let
$$
H_\X(s)=\binom{a_s+s}{s}+\cdots+\binom{a_\delta+\delta}{\delta}
$$
be the $s$-binomial expansion of $H_\X(s)$. Then it is obvious that
$$
\Delta H_\X(s)=H_\X(s)_{\langle s\rangle}
$$
for such an $s$.

Suppose that $C_i(\X)=0$ for some $0\leq i < \dim(\X)$.

\medskip
\noindent{\em Case 1:} $C_0(\X) = 0$.
\medskip

We separate this case into two subcases, according as $G(\X) > C_{a_s}(\X) = \deg (\X)$ or $G(\X) = \deg(\X)$.  We first consider $G(\X) > \deg(\X)$.

Then by Theorem~\ref{L:401} a), $(I_\X)_{a_s} = (I_\Lambda)_{a_s}$ for some $(a_s + 1)$-dimensional linear space $\Lambda$.  But, since $\X \supset \Z$ and $\Z$ is a non-degenerate set of points, $\Lambda = \p^n$.  So, $\dim \Lambda = a_s + 1 = n$.  Thus, $\dim \X = a_s = n-1$ and so $\X$ is a hypersurface in $\p^n$.  By Lemma~\ref{lem4}, $F$ is irreducible and has degree $\leq d$ and $I_\X = (F)$ and we are done in this subcase.

Let's now suppose that $G(\X) = C_{a_s}(\X) = \deg \X$.  By Theorem~\ref{T:309} we have $(I_\X)_s = J_s$ for all $s \gg 0$, where $J = (L_1, \ldots , L_{n-(a_s + 1)}, F)$ and where $\ell = \deg F = C_{a_s}(\X)$.  Since the zeroes of $J$ contain $\Z$, there can be no linear forms in $I_\X$ and hence $(I_\X)_s = (F)_s$ for all $s \gg 0$.  But this implies that $I_\X = (F)$ and we are done by Lemma~\ref{lem4}.  That completes this subcase and finishes the case in which $C_0(\X)$ is the Gotzmann coefficient which is 0.

\medskip\noindent{\em Case 2:} $C_\ell(\X) = 0$ for some $0 < \ell < \dim \X$.

\medskip

Then, by Remark~\ref{R:205}, for a general linear subspace $\Lambda_\ell$ in $\p^n$ of dimension $n-\ell$,
$$
C_0(\X\cap \Lambda_\ell)=C_\ell(\X)=0.
$$
Since we are assuming that $s \gg 0$ we can assume that $\delta >1$ and so we can apply Theorem~\ref{L:401} b) (using Remark~\ref{R:205}) to $\X \cap \Lambda_\ell$.

Thus, by Theorem~\ref{L:401} b) and Lemma~\ref{lem3} (and since $s \gg 0$) we obtain that $(I_\X)_s$ has
  a common factor $F$.  By Lemma~\ref{lem4} $F$ is irreducible and
$$
\begin{array}{lllllllllllllllllll}
& (F)=((I_\X)_{\le s})=(((I_\Z)_{\leq d})_{\le s})=((I_\Z)_{\leq d})= I_\X\\
\Leftrightarrow
& G(\X)=\deg(\X) \quad (\text{by Theorem~\ref{T:311}}),
\end{array}
$$
which completes the proof.
\end{proof}

%
%
%
%
%
%
%

Before we finish this section, we prove a lemma which will be used
for the proof of Theorem~\ref{T:506}. Recall that although our definition of
{\it persistence index} (see Definition~\ref{perindex}) was for any quotient ring of $R = k[x_0, \ldots , x_n]$, we only gave information on it in case $A = R/I$ when $I = I_\X$ was the ideal of a closed subschemes of $\p^n$, i.e., only for saturated ideals.   Our next lemma calculates the persistence index in general.

\begin{Lem}\label{L:501}
Let $I$ be a homogeneous ideal of $R=k[x_0,\ldots,x_n]$. Then,
\[G(R/I)=\max\{G(R/I^{\sat}),\sat(I)\}.\]
\end{Lem}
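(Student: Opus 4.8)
The plan is to read $G(R/I)$ as the \emph{persistence index} of $R/I$ (Definition~\ref{perindex}), i.e. the least integer $d$ with $H(R/I,j+1)=H(R/I,j)^{\mac j}$ for all $j\ge d$; for the saturated ideal $I^{\sat}$ this coincides with the Gotzmann number by Theorem~\ref{persistence number}(b), so writing $g=G(R/I^{\sat})$ and $s=\sat(I)$ the target becomes $G(R/I)=\max\{g,s\}$. First I would record three elementary facts comparing $h(j):=H(R/I,j)$ and $h^{\sat}(j):=H(R/I^{\sat},j)$: from $I\subseteq I^{\sat}$ one gets $h(j)\ge h^{\sat}(j)$ for all $j$; since $I$ and $I^{\sat}$ agree in all degrees $\ge s$ one has $h(j)=h^{\sat}(j)$ for $j\ge s$; and by the minimality of $s$ the containment $I_{s-1}\subsetneq (I^{\sat})_{s-1}$ is strict, so $h(s-1)>h^{\sat}(s-1)$. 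The whole problem is thereby reduced to locating the degrees in which the two Hilbert functions have maximal growth.

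The heart of the argument is two claims. \emph{Claim 1: $G(R/I)\ge s$.} I would argue by contradiction: if maximal growth held for $R/I$ in degree $s-1$, then $h(s)=h(s-1)^{\mac{s-1}}$. Combining $h(s)=h^{\sat}(s)$ with Macaulay's Theorem~\ref{T:201}(a) for $R/I^{\sat}$, namely $h^{\sat}(s)\le h^{\sat}(s-1)^{\mac{s-1}}$, together with the strict monotonicity of the Macaulay function $-^{\mac{s-1}}$ applied to $h^{\sat}(s-1)<h(s-1)$, gives
$$
h^{\sat}(s)\le h^{\sat}(s-1)^{\mac{s-1}}<h(s-1)^{\mac{s-1}}=h(s)=h^{\sat}(s),
$$
a contradiction. \emph{Claim 2: for every $j\ge s$, $R/I$ has maximal growth in degree $j$ if and only if $R/I^{\sat}$ does.} This is immediate, since $h(j)=h^{\sat}(j)$ and $h(j+1)=h^{\sat}(j+1)$ (both because $j,j+1\ge s$) make the equalities $h(j+1)=h(j)^{\mac j}$ and $h^{\sat}(j+1)=h^{\sat}(j)^{\mac j}$ the same statement.

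I would then finish by a short case analysis. If $g\le s$, then $R/I^{\sat}$ has maximal growth in all degrees $\ge s$, so by Claim~2 so does $R/I$, giving $G(R/I)\le s$; with Claim~1 this forces $G(R/I)=s=\max\{g,s\}$. If $g>s$, then maximal growth for $R/I^{\sat}$ fails in degree $g-1\ (\ge s)$ and holds in all degrees $\ge g$, and by Claim~2 the same holds for $R/I$, whence $G(R/I)=g=\max\{g,s\}$. In either case $G(R/I)=\max\{g,s\}$, as claimed.

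The main obstacle is Claim~1, and specifically the genuine need for \emph{strict} monotonicity of the Macaulay function: the non-strict comparison $h(s-1)\ge h^{\sat}(s-1)$ only survives as $\ge$ after applying $-^{\mac{s-1}}$, which would not contradict Macaulay's bound, so the proof really uses that strictly enlarging the value in degree $s-1$ strictly enlarges the permissible value in degree $s$. A minor secondary point is to keep the identification of $G(R/I)$ with the persistence index consistent via Theorem~\ref{persistence number}, and to note that the degenerate case $I^{\sat}=R$ (equivalently $R/I$ of finite length) is harmless, since there both sides reduce to $\sat(I)$.
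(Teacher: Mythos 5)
Your proof is correct, but it reaches the crucial inequality $G(R/I)\ge \sat(I)$ by a genuinely different route than the paper. The paper's argument is homological: for $d\ge G(R/I)$ the lex-segment ideal $I^{\lex}$ has no generators in degrees $>d$, the Bigatti--Hulett--Pardue theorem gives $\beta_{p,j}(I)\le\beta_{p,j}(I^{\lex})$, hence $I$ is $d$-regular, and then Green's formula $\reg(I)=\max\{\reg(I^{\sat}),\sat(I)\}$ yields $d\ge\sat(I)$; after that, both the paper and you finish identically, by observing that $H(R/I,t)=H(R/I^{\sat},t)$ for $t\ge\sat(I)$ makes maximal growth for the two rings the same condition in that range (your Claim~2 and case analysis). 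Your replacement for the homological step is purely combinatorial: the strict containment $I_{s-1}\subsetneq (I^{\sat})_{s-1}$ at $s=\sat(I)$ gives $h(s-1)>h^{\sat}(s-1)$, and strict monotonicity of $c\mapsto c^{\mac{s-1}}$ plus Macaulay's bound for $R/I^{\sat}$ forces maximal growth of $R/I$ to fail in degree $s-1$. This trade-off is real: your argument is more elementary and self-contained, needing only Theorem~\ref{T:201}(a), while the paper's detour through Betti numbers proves strictly more along the way, namely $\reg(I)\le G(R/I)$ for an arbitrary homogeneous ideal (a non-saturated extension of Gotzmann's Regularity Theorem~\ref{regularity theorem}), of which $\sat(I)\le G(R/I)$ is only a consequence. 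The one point you should make explicit is the strict monotonicity of Macaulay's function: it is true and standard (e.g., via the order-preserving bijection between integers and their $d$-binomial expansion sequences, or via shadows of lex segments), but it is nowhere stated in the paper's preliminaries, and as you yourself note, the non-strict inequality would not produce the contradiction; you also correctly flag that the degenerate case $I^{\sat}=R$ needs the convention $0^{\mac d}=0$, since the paper defines $c^{\mac d}$ only for $c>0$.
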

\begin{proof}
Suppose that $d\geq G(R/I)$. Then, by definition,
$$H(R/I,t+1) = H(R/I,t)^{\mac t}$$
for all $t\geq d$. Consider the lex-segment ideal $I^{\lex}$ of
$I$. Since we have the maximal growth of Hilbert function in
degrees greater than $d$, $I^{\lex}$ does not have monomial
generators whose degree is larger than $d$, i.e., for all $t> d$,
$$\beta_{0,t}(I^{\lex})=0.$$
From the result given by Bigatti, Hulett, and Pardue in \cite{Bi, Hu, Pa}, we have that
$\beta_{p,j}(I)\leq \beta_{p,j}(I^{\lex})$ for all $p,j$. Hence it
follows that $I$ is $d$-regular from Theorem~\ref{T:401}, i.e.,
$d\geq \reg(I)$. On the other hand, we know that, by Proposition
2.6 in \cite{G2},
$$\reg(I)=\max\{\reg(I^{\sat}), \sat(I)\}.$$
Hence, for all $t\geq d$,
$$I_t=I^{\sat}_t$$
and so
\[
\begin{array}{llllllllllllllllllllll}
H(R/I^{\sat},t)^{\mac t}& = & H(R/I,t)^{\mac t}\quad (\hbox{since } t\geq\sat(I))\\[1ex]
                 & = & H(R/I,t+1)\quad (\hbox{since } t\geq G(R/I))\\[1ex]
                 & = & H(R/I^{\sat},t+1) \quad ( \hbox{ because } t\geq \sat(I)),
\end{array}
\]
and this means $d\geq \max\{G(R/I^{\sat}), \sat(I))\}$.

\smallskip

Conversely, suppose that $d\geq \max\{G(R/I^{\sat}), \sat(I)\}$.
Then, for all $t\geq d$,
\[
\begin{array}{llllllllllllllllllllll}
H(R/I,t+1) & = & H(R/I^{\sat},t+1) \quad (\hbox{since } t\geq \sat(I))\\[1ex]
           & = & H(R/I^{\sat},t)^{\mac t} \quad (\hbox{since } t\geq G(R/I^{\sat}))\\[1ex]
           & = & H(R/I,t)^{\mac t}\quad (\hbox{ because } t\geq \sat(I)).
\end{array}
\]
Hence we obtain that $d\geq G(R/I)$, as we wished.
\end{proof}

 Let $h$ and $d$ be positive
integers. For the $d$-binomial expansion of
$$
h=\binom{a_d+d}{d}+\cdots+\binom{a_{\delta}+\delta}{\delta},
$$
and for some $0\leq i \leq a_d$, we define
$$
C_i(h,d)=|\{\ell\,|\,a_\ell =i\}|.
$$
Let $Z$ be a non-degenerate finite reduced  set of   points $\mathbb P^n$ with
UPP. The following theorem says what happens if the Hilbert function  of $Z$ has a maximal growth in degree $d$.

\begin{Thm}\label{T:506}
Let $\Z$ be a non-degenerate finite reduced  set of  points with
UPP in $\p^n$ and let  $\Delta {\bf
H}=(h_0,h_1,\ldots,h_t)$ be the first difference of the Hilbert
function $\bf H$ of $\Z$. Suppose that
$$
h_d=\binom{a_d+d}{d}+\cdots+\binom{a_{\delta}+\delta}{\delta},
$$
and that $h_{d+1}=h_d^{\langle d\rangle }$ for some $d$. Let $(I_\Z)_{\leq d} = (I_\X)_{\leq d}$.

Then, either $X$ is an irreducible hypersurface or
$$
C_\ell(h_d,d)\neq 0
$$
for every $0\leq \ell \leq a_d$.
\end{Thm}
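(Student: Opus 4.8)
The plan is to transfer everything to a general hyperplane section and then run the argument of Theorem~\ref{T:504}. Write $\mathcal H = V(L)$ for a general linear form $L$ and set $\mathcal Y = \X\cap\mathcal H$. Since $(I_\X)_\ell=(I_\Z)_\ell$ for $\ell\le d$, we have $H_\X(\ell)=H_\Z(\ell)$ there, and hence $\Delta H_\X(d)=h_d$ with the stated $d$-binomial expansion and top entry $a_d$. Because $\Z$ is zero-dimensional, $S_\Z$ is one-dimensional Cohen--Macaulay, so $L$ is a nonzerodivisor and $h_\ell=H(R/(I_\Z+(L)),\ell)$ for all $\ell$. The key first step is to form the truncation $\bar J=(I_\Z)_{\le d}+(L)$: it is generated in degrees $\le d$, agrees with $I_\Z+(L)$ in degrees $\le d$ (so $H(R/\bar J,d)=h_d$), and satisfies $h_d^{\mac d}=h_{d+1}\le H(R/\bar J,d+1)\le h_d^{\mac d}$, the lower bound from $\bar J\subseteq I_\Z+(L)$ and the upper bound from Macaulay's Theorem. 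Thus $R/\bar J$ has maximal growth in degree $d$, and Gotzmann's Persistence Theorem~\ref{T:401} makes $\bar J$ $d$-regular with persistent maximal growth. Consequently $\mathcal Y=V(\bar J)$ satisfies $d\ge G(\mathcal Y)$ (Theorem~\ref{persistence number}), its Gotzmann difference set is exactly $M_d(h_d)$, and therefore $C_\ell(h_d,d)=C_\ell(\mathcal Y)=C_{\ell+1}(\X)$ (Theorem~\ref{T:204}) with $a_d=\dim\mathcal Y=\dim\X-1$.

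Two further facts about $\mathcal Y$ set up the rigid machinery. First, $H_{\mathcal Y}(1)=\Delta H_\X(1)=H_\Z(1)-1=n$, so $\mathcal Y$ is non-degenerate in $\mathcal H\cong\p^{n-1}$. Second, since $I_{\mathcal Y}$ is saturated and $d\ge G(\mathcal Y)$, Proposition~\ref{P:301} gives $M(\mathcal Y)\le G(\mathcal Y)\le d$, whence $\Delta H_{\mathcal Y}(d)=H_{\mathcal Y}(d)_{\mac d}$ --- precisely the equality required to invoke Theorem~\ref{L:401} at $\mathcal Y$.

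Now suppose, aiming at alternative (1), that $C_i(h_d,d)=C_i(\mathcal Y)=0$ for some $0\le i<a_d$ (the top coefficient $C_{a_d}(h_d,d)=\deg\mathcal Y$ is nonzero). I would mirror the proof of Theorem~\ref{T:504} applied to $\mathcal Y$. If $C_0(h_d,d)=0$, then either $G(\mathcal Y)>\deg\mathcal Y$, in which case Theorem~\ref{L:401}(a) places $\mathcal Y$ inside a linear space $\Lambda'\subseteq\mathcal H$ of dimension $a_d+1$, forced to be all of $\mathcal H$ by the non-degeneracy just noted, so $\mathcal Y$ is a hypersurface in $\mathcal H$ and $\X$ a hypersurface in $\p^n$; or $G(\mathcal Y)=\deg\mathcal Y$, in which case Theorem~\ref{T:311} already exhibits $\mathcal Y$ as a hypersurface in a linear subspace, with the same conclusion for $\X$. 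If instead $C_\ell(h_d,d)=0$ for some $0<\ell<a_d$, I would cut by a general $\Lambda_\ell$ so that $C_0(\mathcal Y\cap\Lambda_\ell)=C_\ell(\mathcal Y)=0$ (Remark~\ref{R:205}), apply Theorem~\ref{L:401}(b) to produce a common factor for the section, and lift it through the general linear forms by Lemma~\ref{lem3} to conclude that $(I_\Z)_d=(I_\X)_d$ has a common factor of positive degree. In every case Lemma~\ref{lem4} (this is where UPP enters) forces that factor to be irreducible and to generate $((I_\Z)_{\le d})=I_\X$, so $\X$ is an irreducible hypersurface: alternative (1) holds.

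Therefore, if $\X$ is not an irreducible hypersurface, no $C_i(h_d,d)$ with $i<a_d$ can vanish, and together with $C_{a_d}(h_d,d)\ne0$ this gives $C_\ell(h_d,d)\ne0$ for all $0\le\ell\le a_d$ --- alternative (2). I expect the main obstacle to be the first paragraph's bridge: converting maximal growth of the \emph{first difference} at the single degree $d$ into the statement that $d$ lies in the stable (Gotzmann) range of $\mathcal Y$, so that the finite-degree coefficients $C_\ell(h_d,d)$ equal genuine Gotzmann coefficients and Theorem~\ref{L:401} applies; passing through $\bar J$ and Gotzmann's Persistence Theorem is what makes this work without assuming $(I_\Z)_{\le d}$ saturated. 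A secondary point, absent from Theorem~\ref{T:504} (which operates in degree $s\gg0$), is that here we are pinned at degree $d$: applying Theorem~\ref{L:401}(b) and Lemma~\ref{lem3} requires choosing a degree $k\le d$ with $\delta(k)>1$ and $\reg(I_\X)<k$, exactly as in the Remark following Theorem~\ref{L:401}, and this bookkeeping must be checked with care.
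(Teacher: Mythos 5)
Your opening reduction is sound and is essentially the paper's: the paper also passes to $J=(I_\X+(L))/(L)$, applies Gotzmann's Persistence Theorem~\ref{T:401}, and then Lemma~\ref{L:501} to get $d\geq G(\X\cap\mathcal H)$ (you use the $d$-regularity clause of Persistence in place of Lemma~\ref{L:501}, which amounts to the same thing), so that $C_\ell(h_d,d)=C_\ell(\X\cap\mathcal H)=C_{\ell+1}(\X)$ by Theorem~\ref{T:204}. The divergence comes next, and it opens a genuine gap. At this point the paper simply applies Theorem~\ref{T:504} to $\X$ itself (its hypotheses hold because the maximal growth $h_{d+1}=h_d^{\mac d}$ makes $((I_\Z)_{\le d})$ saturated), so every degeneracy question is settled in $\p^n$, where the non-degenerate UPP set $\Z\subset\X$ is available. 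You instead re-run the proof of Theorem~\ref{T:504} one level down, for $\mathcal Y=\X\cap\mathcal H$ inside $\mathcal H$, and there the key input vanishes: a general hyperplane contains no point of $\Z$, so nothing forces the linear space produced by Theorem~\ref{L:401}(a) to be all of $\mathcal H$. Your substitute, ``$H_{\mathcal Y}(1)=\Delta H_\X(1)=n$, so $\mathcal Y$ is non-degenerate,'' is incorrect: $H_{\mathcal Y}$ is the Hilbert function of the \emph{saturation} of $\bar J=(I_\X)_{\le d}+(L)$, and $d$-regularity of $\bar J$ only guarantees agreement with the saturation in degrees $\geq\sat(\bar J)$, not in degree $1$; saturating can add linear forms. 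This is a real phenomenon, not bookkeeping: the general hyperplane section of a non-degenerate scheme can be degenerate (two skew lines in $\p^3$ section to two collinear points in a plane, where $\Delta H_\X(1)=3$ but $H_{\mathcal Y}(1)=2$). Consequently your Case 1, and with it the further unproved step ``$\mathcal Y$ a hypersurface in $\mathcal H$, hence $\X$ a hypersurface in $\p^n$,'' does not go through as written.

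The repair is to route every degeneracy argument through $\X$, where $\Z$ lives: either quote Theorem~\ref{T:504} for $\X$, as the paper does, or, if you insist on arguing at the level of sections, observe that your Case 1 is the statement $C_1(\X)=0$ and handle it exactly like your Case 2 --- apply Theorem~\ref{L:401}(b) to $\mathcal Y$ at $s\gg 0$, lift the resulting common factor to $(I_\X)_s$ by Lemma~\ref{lem3}, and invoke Lemma~\ref{lem4}, so that non-degeneracy is only ever used for $\Z$ in $\p^n$. Your Case 2 and your closing remark about working at $s\gg 0$ rather than exactly at degree $d$ (which is also how the paper's Theorem~\ref{T:504} secures $\delta>1$) are fine.
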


\begin{proof} First, note that $I=((I_\Z)_{\leq d})$ is a saturated ideal defining a closed subscheme $\X$ since $h_{d+1}=h_d^{\langle d\rangle }$. Let $L$ be a general linear form in $R=k[x_0,x_1,\dots,x_n]$ and
$\displaystyle{J=(I+(L))/{(L)}}.$ If $H$ is the hyperplane defined by the vanishing of $L$, then $J^{\sat}$
is the defining ideal of $\X\cap H$ in $S=R/(L)$. Moreover, since
$$
h_{d+1}=h_d^{\langle d\rangle } \quad\text{and}\quad H(R/(I+(L)),s)=h_s
$$
for every $s \ge d$, we see  that, by Theorem~\ref{T:401} (Gotzmann's Persistence Theorem),
$$
H(S/J,s+1)=H(S/J,s)^{\langle s\rangle}
$$
for such $s$. Note that, by Lemma~\ref{L:501},
$$
G(S/J)=\max\{\, G(S/J^{\sat}),\, \sat(J)\,\}.
$$
Hence we have
\begin{equation}\label{Meq1}
d\geq G(S/J)\geq G(X\cap H)=G(S/J^{\sat}).
\end{equation}
From the $d$-binomial expansion of $h_d$, given above,
we have $C_\ell(h_d,d)=C_\ell(\X\cap H)$ for every $0\leq \ell\leq a_d$ since
$d\geq G(\X\cap H)$. Thus $C_\ell(h_d,d)=C_{\ell+1}(\X)$ for every
$0\leq \ell\leq a_d$ (see Theorem \ref{T:204}). Therefore it follows from
Theorem~\ref{T:504} that
$$
C_\ell(h_d,d)\neq 0
$$
for $0\leq \ell \leq a_d$ if $\X$ is not a hypersurface.
\end{proof}

\begin{Remk}\label{Hil_not_UPP}
Theorem~\ref{T:506} gives us a condition on the Hilbert function
that prohibits the existence of points with UPP from having that Hilbert
function. If we have maximal growth of the $h$-vector of a finite reduced set of
points $\Z$ in degree $d$ such that $C_i(h_d, d)= 0$ and
$C_j(h_d, d)\neq 0$ for some $0\leq i\neq j< \dim(\X)$ (where $\X$ is a closed subscheme in $\p^n$ defined by the saturated ideal $((I_\Z)_{\leq d})$) then $\Z$
cannot have UPP .
\end{Remk}

\begin{Ex}\label{Example_UPP}
\begin{itemize}
  \item[(a)] Let $\Z$ be a non-degenerate set of points in $\p^5$ and let
$\Delta{\bf H_\Z}=(1,5,12,22,37,57,82,112,147)$ be the first difference of the Hilbert function ${\bf H}_\Z$ of $\Z$. Then
$h_7$ and $h_8$ of $\Delta{\bf H_\Z}$ have a maximal growth in degrees $7$ and $8$. Moreover, since
$$
\begin{array}{lllllllllllllll}
h_7 & = & 112 \\[1ex]
    & = & \ds \binom{2+7}{7} +  \binom{2+6}{6} +\binom{2+5}{5} +\binom{2+4}{4} +\binom{2+3}{3} +\binom{2}{2} +\binom{1}{1},\\[3ex]
\end{array}
$$
we have
$$
C_2(h_7,7)=5, \quad C_1(h_7,7)=0, \quad \text{and} \quad C_0(h_7,7)=2.
$$
However, the saturated ideal $((I_\Z)_{\le 7})$ cannot define an irreducible hypersurface and $C_1(h_7,7)=0$, i.e., $\Z$ cannot have UPP.
  \item[(b)] Let $\Z$ be a set of non-degenerate reduced points in $\p^n$
($n\geq 3$) with $h$-vector ($1,h_1,\ldots,h_t$).  Let $d$ be an integer such that $0<d<t$ and suppose that $\Z$ does not lie on any hypersurfaces of degree $d-1$.  Let $h_d$ be given by
$$
h_d=\binom{2+d}{d}+\cdots+\binom{2+(d-\gamma+1)}{(d-\gamma+1)}+\binom{d-\gamma}{d-\gamma}+\cdots+\binom{d-\ell+1}{d-\ell+1}
$$
for some $\ell <\gamma <d$ and define $h_{d+1}:=h_d^{\langle d\rangle }$.    Then $\Z$ cannot have UPP since the saturated ideal
$((I_\Z)_{\leq d})$ does not define an irreducible hypersurface (there is more than one form in $(I_\Z)_d$ and none in degree $d-1$) but $C_1(h_d,d)=0$.
\end{itemize}

\medskip

One can easily construct lots of other examples of $h$-vectors which are not the $h$-vectors of points with UPP.
\end{Ex}


\end{document}